\newtheorem{prop}{Proposition}[section]
\newtheorem{theo}[prop]{Theorem}
\newtheorem{cor}[prop]{Corollary}
\newtheorem{rem}[prop]{Remark}
\newtheorem{lem}[prop]{Lemma}
\newcommand{\R}{\mbox{\boldmath $\mathbb{R}$}}
\newenvironment{proof}
 {\begin{trivlist} \item[\hskip \labelsep {\bf Proof}\hspace*{3 mm}]}
 {\hfill$\Box$\end{trivlist}}
\begin{document}
\title{On the flat geometry of the cuspidal edge}
\author{Ra\'{u}l Oset Sinha\footnote{Supported by DGCYT and FEDER grant no. MTM2012-33073.}\, and\, Farid Tari
\footnote{Partially supported by the grants FAPESP 2014/00304-2, CNPq 301589/2012-7, 472796/2013-5.}}

\maketitle
\begin{abstract}
We study the geometry of the cuspidal edge $M$ in $\mathbb R^3$ derived from its contact with planes and lines (referred to as
flat geometry). The contact of $M$ with planes is measured by the singularities of the height functions on $M$.
We classify submersions on a model of $M$ by diffeomorphisms
and recover the contact of $M$ with planes from that classification.
The contact of $M$ with lines is measured by the
singularities of orthogonal projections of $M$. We list the generic singularities of the projections and
obtain the generic deformations of the apparent contour (profile) when the direction of projection varies locally in $S^2$.
We also relate the singularities of the height functions and of the projections
to some geometric invariants of the cuspidal edge.
\end{abstract}

\renewcommand{\thefootnote}{\fnsymbol{footnote}}
\footnote[0]{2010 Mathematics Subject classification 57R45, 53A05.}
\footnote[0]{Key Words and Phrases. Apparent contours, Bifurcations, Cuspidal edge, Height functions, Orthogonal projections, Singularities.}

\section{Introduction}\label{sec:intro}

Let $\phi:U\subset \mathbb R^2\to \mathbb R^3$ be a parametrisation of a surface $M$, where $U$ is an open set
and $\phi$ is an infinitely differentiable map.
The surface $M$ is called a {\it cuspidal edge} if it admits a parametrisation $\phi$ which is $\mathcal A$-equivalent to
$f(x,y)=(x,y^2,y^3)$, that is, there exist diffeomorphisms $h$ and $k$ such that $\phi=k\circ f\circ h^{-1}$.
Our study is local in nature so we consider germs of parametrisations of a cuspidal edge. Observe that the
cuspidal edge is singular along a curve and its trace on a plane transverse to this curve is a curve with a cusp singularity, see
Figure \ref{fig:Discvuk} (middle figure).

Cuspidal edges occur naturally in differential geometry. For instance, given a regular surface $M$ in $\mathbb R^3$,
one can consider its parallel $M_d$, which is the surface obtained by moving the points on $M$ along a chosen
unit normal vector to $M$ by a fixed distance $d$. The parallel $M_d$ can become singular and is, in general,
a cuspidal edge with its singularities corresponding to
points on the surface where $d=1/\kappa_i$, $i=1,2$, $\kappa_i$'s being the principal curvatures. (The singularities
of $M_d$ can become more degenerate than a cuspidal edge on some special curves on the surface $M$.)
Another example is the focal set (caustic) of a surface in $\mathbb R^3$. If we take a parametrisation where the lines of curvature
are the coordinate curves $x_i=constant$, $i=1,2$, then the focal set is a cuspidal edge at generic points on the curves $\partial \kappa_i/\partial x_i=0$, $i=1,2$, where $\kappa_i$ are the principal curvatures.

Because cuspidal edges occur naturally and in a stable way in some cases, it is of interest to study their
differential geometry. There is already work in this direction,
see for example \cite{bruce-wilkinson, martinsnuno, martinssaji, kentaro, teramoto, wilkinson}.

In this paper, we study the geometry of the cuspidal edge $M$ derived from its contact with planes and lines
(which is referred to as the flat geometry of $M$ as planes and lines are flat objects, i.e., have zero curvature).
Consider parallel planes orthogonal to  ${\bf v}\in S^2$, where  $S^2$ denotes the unit sphere in $\mathbb R^3$.
These planes are the fibres of the function $h_{{\bf v}}(p)=p\cdot{} {\bf v}$, where $``\cdot{}"$ is the scalar product in $\mathbb R^3$.
The contact of $M$ with the above planes at $p_0=\phi(0,0)$ is measured by the singularities
of $h_{{\bf v}}\circ \phi$ at the origin.
By varying ${\bf v}$, we get the family of
height functions
$H:U\times S^2\to \mathbb R$ on $M$ given by
$$H((x,y),{\bf v})=\phi(x,y)\cdot{}{\bf v}.$$

In the above setting the model (flat) surfaces in $\mathbb R^3$ are planes and the parametrisation $\phi$ is taken in general form.
In this paper, we follow the approach in \cite{brucewest} and invert the situation: we fix the $\mathcal A$-model $X$ of the cuspidal edge
as the image of the map-germ $f(x,y)=(x,y^2,y^3)$ and consider its contact with fibres of submersions.

We classify in \S \ref{sec:submersions} submersion $\mathbb R^3,0\to \mathbb R$ up to changes of coordinates
in the source that preserve the model cuspidal edge $X$. Such changes of coordinates form a geometric subgroup $\mathcal R(X)$  of the Mather group $\mathcal R$
(see \cite{damon}). In \S \ref{sec:Geomsubmersions}, we deduce from that classification the generic geometry of the contact of a cuspidal edge $M$ with planes. We study the duals of these generic cases. Other results on duals of cuspidal edges can be found in \cite{teramoto}.

The contact of $M$ with lines is measured by the $\mathcal A$-singularities of orthogonal projections of $M$ to planes. Here too we fix the model cuspidal edge $X$ and
classify  in \S \ref{sec:projections}  the singularities of germs of submersions $\mathbb R^3,0\to \mathbb R^2,0$ under the action of the subgroup $_X\mathcal A=\mathcal R(X)\times \mathcal L$
of the Mather group $\mathcal A$.
This approach has an important advantage to considering the $\mathcal A$-singularities of the orthogonal
projections on $M$ (or the $\mathcal R$-singularities of the height functions on $M$). Using a tansversality result from \cite{brucewest} adapted to our situation,
we can state that only the singularities of $_X\mathcal A_e$-codimension $\le 2$ can occur for a generic cuspidal edge $M$. Furthermore, we associate a
natural 2-parameter family of submersions $\mathbb R^3,0\to \mathbb R^2,0$ on the model $X$ obtained from the family of orthogonal projections of $M$ to planes.
This family is an $_X\mathcal A_e$-versal unfolding of the generic singularities of the submersions on $X$.
This allows us to obtain in \S \ref{sec:Geometryprojections}  the generic deformations of the apparent contour (profile) of $M$
when the direction of projection varies locally in $S^2$.

\section{Preliminaries} \label{sec:prel}

We review in this section some aspects of the geometry of the cuspidal edge (\S\ref{ssec:geomcuspidalede}) and establish
some notation (\S\ref{ssec:notaSing}) for the classification of germs of functions and mappings on the cuspidal edge.

\subsection{Geometric cuspidal edge}\label{ssec:geomcuspidalede}

Let $M$ be a general cuspidal edge in $\mathbb R^3$ which we shall refer to (following the notation in \cite{brucewest}) as
a geometric cuspidal edge. In \cite{martinssaji} a local parametrisation (at the origin) of the cuspidal edge is given by
allowing any changes of coordinates in the source and changes of coordinates in the target given by isometries.
The parametrisation, which we shall adopt in the rest of the paper,
 is the following
\begin{equation}\label{eq:prenormalform}
\phi(x,y)=(x,a(x)+\frac{1}{2}y^2,b_1(x)+y^2b_2(x)+y^3b_3(x,y)),
\end{equation}
with $(x,y)$ in a neighbourhood of the origin and
$a(0)=a'(0)=0$,  $b_1(0)=b_1'(0)=0$, $b_2(0)=0$, $b_3(0)\ne 0$.
Following the notation in \cite{martinssaji}, we write
$$
\begin{array}{rcl}
a(x)&=&\frac{1}{2}a_{20}x^2+\frac{1}{6}a_{30}x^3+\frac{1}{24}a_{40}x^4+O(5),\\
b_1(x)&=&\frac{1}{2}b_{20}x^2+\frac{1}{6}b_{30}x^3+\frac{1}{24}b_{40}x^4+O(5),\\
b_2(x)&=&\frac{1}{2}b_{12}x+\frac{1}{6}b_{22}x^2+O(3),\\
b_3(x,y)&=&\frac{1}{6}b_{03}+ \frac{1}{6}b_{13}x+O(2).
\end{array}
$$

The tangential direction of $M$ at the origin is along $(1,0,0)$ and
its tangent cone is the plane $w=0$, where $(u,v,w)$ are the coordinates of $\mathbb R^3$.
The singular set $\Sigma$ of $M$ is the image of the line $y=0$ and
is parametrised by
$
\alpha(x)=\phi(x,0)=(x,a(x),b_1(x)).
$

If we denote by $\kappa_{\Sigma}$ and $\tau_{\Sigma}$ the curvature
and the torsion of $\Sigma$ as a space curve, then
$$\begin{array}{l}
\kappa_{\Sigma}(0)=\sqrt{a_{20}^2+b_{20}^2}, \\
 \tau_{\Sigma}(0)=\frac{a_{20}b_{30}-b_{20}a_{30}}{a_{20}^2+b_{20}^2}, \\
\tau_{\Sigma}'(0)=\frac{1}{a_{20}^2+b_{20}^2}(
a_{20}b_{40}-b_{20}a_{40}-\frac{2}{a_{20}^2+b_{20}^2}(a_{20}b_{30}-b_{20}a_{30})(a_{20}a_{30}+b_{20}b_{30})
).
\end{array}
$$

The osculating plane of $\Sigma $ at the origin is orthogonal to the vector $(0,-b_{20},a_{20})$.
It coincides with the tangent cone to $M$ at the origin if and only if $b_{20}=0$.
It is worth observing that this happens if and only if the closure of the parabolic curve of the regular part of $M$ intersects
the singular set $\Sigma$.

Recall that a smooth curve has contact of type $A_{\ge 1}$ at a point $p$ with any of its tangent planes at $p$, of type $A_{\ge 2}$ if the
plane is the osculating plane at $p$ and of type  $A_{ 3}$ if furthermore the torsion of the curve vanishes at $p$ but the derivative of the torsion is not zero at that point
(see for example \cite{brucegiblinBook}). In \cite{kentaro} other invariants of the cuspidal edge are defined. These are:

The singular curvature $\kappa_s$  ($\kappa_s(0)=a_{20}$);

The limiting normal curvature $\kappa_n$ ($\kappa_n(0)=b_{20}$);

The cuspidal curvature $\kappa_c$  ($\kappa_c(0)=b_{03}$);

The cusp-directional torsion $\kappa_t$  ($\kappa_t(0)=b_{12}$);

The edge inflectional curvature $\kappa_i$  ($\kappa_i(0)=b_{30}$).

\medskip
The contact of $M$ with lines and planes is affine
invariant (\cite{bgt95}), so we can allow affine changes of coordinates in the target without changing the type of contact.

Given a parametrisation $\psi:\mathbb R^2,0\to \mathbb R^3,0$, we can make a rotation in the target
and changes of coordinates in the source and write
$j^2\psi(x,y)=(x,Q_1(x,y),Q_2(x,y))$, with $Q_1,Q_2$ homogeneous polynomials of degree 2.
We can consider the
 ${\cal G}=GL(2,{\mathbb R})\times GL(2,{\mathbb R})$-action on the set of pairs of quadratic forms
 $(Q_1,Q_2)$.
Following \cite{martinssaji}, we can set $(Q_1,Q_2)=(\frac{a_{20}}{2}x^2+\frac{1}{2}y^2,\frac{b_{20}}{2}x^2)$
 by isometric changes of coordinates in the target and any smooth changes of coordinates in the source,  and this is ${\cal G}$-equivalent to

\begin{center}
\begin{tabular}{cl}
$(y^2,x^2)$ & if and only of $b_{20}\ne 0$ (hyperbolic)\cr
$(\pm x^2+y^2,0)$& if and only of $b_{20}=0$, $a_{20}\ne 0$  (inflection)\cr
$(y^2,0)$& if and only of $b_{20}=a_{20}= 0$ (degenerate inflection)
\end{tabular}
\end{center}

The above $\cal G$-classes are the only ones that can occur for the pair
$(Q_1,Q_2)$ associated to a parametrisation of a cuspidal edge.
In particular, most points on $\Sigma$ are hyperbolic points for the pair $(Q_1,Q_2)$ and
we have an inflection point if and only if the osculating plane of $\Sigma $ coincides with the tangent cone to $M$.

Following the above discussion, we can take $(Q_1,Q_2)$ associated to $\phi$ in \eqref{eq:prenormalform}
in one of the $\cal G$ normal forms above. However we shall work with the parametrisation \eqref{eq:prenormalform}
to make the interpretation of the conditions we get match some of the invariants in \cite{martinssaji}.

\subsection{Classification tools}\label{ssec:notaSing}

Let ${\cal E}_n$ be the local ring of germs of functions
$\mathbb{R}^n,0 \to \mathbb{R}$ and ${\mathcal M}_n$  its maximal
ideal.
Denote by ${\cal E}(n,p)$ the $p$-tuples of elements in ${\cal
E}_n$.  Let ${\cal A}={\cal R}\times{\cal
L}$
denote the group
of pairs of germs of diffeomorphisms of the source and target, which acts smoothly on ${\mathcal
M}_n.{\cal E}(n,p)$ by $(k_1,k_2).G=k_2\circ G\circ k_1^{-1}$.
The tangent space to the $\mathcal{A}$-orbit of $F$ at the germ $F$
is given by
 $$
 L{\cal A}{\cdot}{F}={\mathcal M}_n.\{F_{x_1},\ldots,F_{x_n}\}+F^*({\mathcal M}_p).\{e_1,\ldots,e_p\},
 $$
where $F_{x_i}$ are the partial derivatives of $F$ with respect to $x_i$
($i=1,\ldots,n$), $e_1,\ldots,e_p$ denote the standard basis vectors
of ${\mathbb R}^p$ considered as elements of ${\cal E}(n,p)$, and
$F^*({\mathcal M}_p)$ is the pull-back of the maximal ideal in
${\cal E}_p$. The extended tangent space to the $\mathcal{A}$-orbit
of $F$ at the germ $F$ is given by
$$
L_e{\cal A}{\cdot}{F}={\cal E}_n.\{F_{x_1},\ldots,F_{x_n}\}+F^*({\cal E}_p).\{e_1,\ldots,e_p\},
$$
and the codimension of the extended orbit is $
d_e(F,{\mathcal{A}})=\dim_{\mathbb{R}}({\cal
E}(n,p)/L_e{\mathcal{A}}(F))\,. $

Let $k \geq 1$ be an integer. We denote by $J^k(n,p)$ the space of
$k$th order Taylor expansions without constant terms of elements of
${\cal E}(n,p)$
 and write $j^kF$ for the $k$-jet of $F$. A germ $F$ is said to be {\it
$k-\mathcal{A}$-determined} if any $G$ with $j^kG=j^kF$ is
$\mathcal{A}$-equivalent to $F$ (notation: $G\sim_{\mathcal A} F$). The $k$-jet
of $F$ is then called a sufficient jet.

Our goal in \S \ref{sec:submersions} and \S \ref{sec:projections}
is to classify germs of functions and mappings on $X \subset \R^3$, where $X$ is the germ of
the smooth model of a cuspidal edge.
This means that we require that the
diffeomorphisms in $\mathbb R^3$  preserve $X$. We
follow the method in \cite{brucewest} and recall some results from
there. Let $X,0\subset \mathbb R^n,0$ be the germ of a reduced
analytic sub-variety of $\mathbb R^n$ at $0$ defined by a polynomial
$h$ in $\mathbb R[x_1,\ldots,x_n]$. Following \mbox{Definition 3.1} in
\cite{brucewest}, a diffeomorphism $k:\mathbb R^n,0\rightarrow
\mathbb R^n,0$ is said to preserve $X$ if $k(X),0=X,0$ (i.e.,
$k(X)$ and $X$ are equal as germs at $0$). The group of such
diffeomorphisms is a subgroup of the group $\mathcal{R}$ and is
denoted by $\mathcal{R}(X)$ ($_{ X}\mathcal R$ in some texts).
We denote by $_X\mathcal A=\mathcal R(X)\times\mathcal L$ the subgroup $\mathcal A$ where the diffeomorphisms in the source
preserve $X$.


\section{Functions on a cuspidal edge}\label{sec:submersions}

Given the $\mathcal A$-normal form
$f(x,y)=(x,y^2,y^3)$ of a cuspidal edge, we classify germs of
submersions $g:\mathbb R^3,0\to \mathbb R,0$ up to $\mathcal
R(X)$-equivalence, with $X=f(\mathbb R^2,0)$.
The defining equation of $X$ is given by $h(u,v,w)=v^3-w^2$.

Let $\Theta(X)$ be the $\mathcal{E}_3$-module of vector fields in
$\mathbb R^3$ tangent to $X$.
We have $\xi\in\Theta(X)$ if and only if $\xi h=\lambda h$
for some function $\lambda$ (\cite{brucewest}).

\begin{prop}\label{prop:genO(X)}
The $\mathcal{E}_3$-module $\Theta(X) $ of vector fields in
$\mathbb R^3$ tangent to $X$ is generated by the vector fields
$
\xi_1=\frac{\partial}{\partial u},$
$\xi_2=2v\frac{\partial}{\partial v}+3w\frac{\partial}{\partial w},$
$\xi_3=2w\frac{\partial}{\partial v}+3v^2\frac{\partial}{\partial w}.
$
\end{prop}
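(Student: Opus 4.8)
The plan is to compute $\Theta(X)$ directly from the defining equation $h(u,v,w)=v^3-w^2$, using the criterion recalled just above: a vector field $\xi = P\,\partial_u + Q\,\partial_v + R\,\partial_w$ lies in $\Theta(X)$ if and only if $\xi h = \lambda h$ for some $\lambda\in\mathcal E_3$. Writing this out, $\xi h = 3v^2 Q - 2w R$, so the condition is
$$
3v^2 Q - 2w R = \lambda(v^3 - w^2)
$$
for some $\lambda$. The strategy is: first check that the three proposed generators $\xi_1,\xi_2,\xi_3$ actually satisfy this (an immediate computation: for $\xi_1$ one gets $0=0$, for $\xi_2$ one gets $6v^3-6w^2 = 6h$ so $\lambda=6$, for $\xi_3$ one gets $6v^2 w - 6v^2 w = 0$); then show conversely that any $\xi\in\Theta(X)$ is an $\mathcal E_3$-combination of them.

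For the converse, the coefficient $P$ of $\partial_u$ is completely unconstrained, since $h$ does not involve $u$; so $P\,\partial_u = P\,\xi_1$ is absorbed and we may assume $P=0$. It then remains to show that every pair $(Q,R)$ with $3v^2 Q - 2w R \in (h)$ is an $\mathcal E_3$-combination of $(2v,3w)$ (from $\xi_2$) and $(2w,3v^2)$ (from $\xi_3$). Equivalently, the module of such $(Q,R)$ is the submodule of $\mathcal E_3^2$ generated by $(2v,3w)$, $(2w,3v^2)$ together with the trivial (Koszul-type) relations coming from multiples of $h$. The key algebraic point is that the syzygy module of the sequence $(3v^2, -2w)$ — i.e.\ the pairs $(Q,R)$ with $3v^2 Q - 2w R = 0$ — is generated by the Koszul syzygy $(2w, 3v^2)$, because $3v^2$ and $-2w$ form a regular sequence in the (Cohen–Macaulay) ring $\mathcal E_3$; this gives $\xi_3$. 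Then any $(Q,R)$ with $3v^2 Q - 2w R = \lambda h = \lambda(v^3-w^2)$ can be normalised: subtract $\tfrac{\lambda}{3}\,(2v, 3w)$ (a multiple of the $\xi_2$ vector, whose image under $(Q,R)\mapsto 3v^2Q-2wR$ is exactly $6v^3-6w^2=6h$, up to rescaling $\lambda$) to reduce to the case $\lambda=0$, and then apply the syzygy statement. Care must be taken that these manipulations stay within $\mathcal E_3$ (no denominators introduced) — dividing $\lambda$ by the constant $6$ is harmless, so this is fine.

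The main obstacle I expect is the honest verification that $3v^2$ and $-2w$ form a regular sequence in $\mathcal E_3$ and hence that their syzygies are purely Koszul; one must be slightly careful because $3v^2$ is not itself a nonzerodivisor-after-the-other in a naive order (one should order them as $w$ first, then $v^2$, and note $v$ is a nonzerodivisor modulo $w$). Equivalently, one can argue at the level of jets / finite determinacy as in \cite{brucewest}: it suffices to verify the generation statement on sufficiently high jets, reducing the problem to linear algebra over polynomial rings, where the Koszul computation is transparent. An alternative, and perhaps cleaner, route is to quote the general principle from \cite{brucewest} (or \cite{damon}) that for a hypersurface $X = h^{-1}(0)$ with $h$ weighted-homogeneous and having an isolated singularity, $\Theta(X)$ is generated by the Euler vector field together with the Hamiltonian-type fields $\partial_i h\,\partial_j - \partial_j h\,\partial_i$; here $h = v^3 - w^2$ is weighted-homogeneous of degree $6$ with weights $(u,v,w)\mapsto(*,2,3)$, the Euler field (in the $v,w$ variables) is $\xi_2$, the Hamiltonian field $h_w \partial_v - h_v \partial_w = -2w\,\partial_v - 3v^2\,\partial_w$ is $-\xi_3$, and $\xi_1 = \partial_u$ covers the free variable $u$ on which $h$ does not depend. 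Either way, once the spanning statement is reduced to this Koszul/Euler dichotomy the proof concludes.
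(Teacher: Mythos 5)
Your proposal is correct, but your primary route differs from the paper's. The paper's proof is a one-line citation: $h=v^3-w^2$ is weighted homogeneous in $(v,w)$ with weights $(2,3)$ and has an isolated singularity as a plane curve, so Proposition 7.2 of Bruce--Roberts gives that the tangent vector fields to the cusp in the $(v,w)$-plane are generated by the Euler field $\xi_2$ and the Hamiltonian field $\xi_3$; one then adjoins $\xi_1=\partial/\partial u$ for the free variable. This is exactly the ``alternative, cleaner route'' you sketch at the end. Your main argument instead reproves that special case by hand: $\xi_2 h=6h$ absorbs the $\lambda h$ term, and the remaining pairs $(Q,R)$ with $3v^2Q-2wR=0$ are exactly the multiples of the Koszul syzygy $(2w,3v^2)$ because $w$ and $v^2$ form a regular sequence. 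That computation is sound, and your worry about regularity is easily dispatched in $\mathcal E_3$: from $3v^2Q=2wR$ one gets $Q|_{w=0}=0$ off $v=0$ and hence everywhere by continuity, so Hadamard's lemma gives $Q=w\tilde Q$, and since $w$ is a nonzerodivisor in $\mathcal E_3$ one concludes $R=\tfrac32 v^2\tilde Q$. What your route buys is a self-contained elementary proof not relying on the Bruce--Roberts machinery; what the paper's route buys is brevity and the reassurance that the statement is an instance of a general principle rather than an ad hoc computation. Either is acceptable here.
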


\begin{proof}
The defining equation $h(u,v,w)=v^3-w^2$ of $X$ is weighted
homogenous in $v$ and $w$ with weights 2 and 3 respectively. The result follows by applying
\mbox{Proposition 7.2} in \cite{bruceroberts} for isolated singularities to the cusp $v^3-w^2=0$ in the $(v,w)$-plane and adding the trivial vector field $\xi_1$ in $\mathbb R^3$.
\end{proof}

Let $\Theta_1(X)=\{\delta\in\Theta(X):j^1\delta=0\}$. It follows from Proposition \ref{prop:genO(X)} that
$$
\Theta_1(X)=\mathcal M_3^2.\{ \mathcal \xi_1\}+\mathcal M_3.\{ \mathcal \xi_2, \xi_3\}.
$$

For $f\in \mathcal E_3$, we define  $\Theta(X){\cdot} f=\{\eta(f)\,|\,
\eta\in \Theta(X)\}$. We define similarly $\Theta_1(X){\cdot} f$ and
the following tangent spaces to the  $\mathcal R(X)$-orbit of $f$ at
the germ $f$:
$$
L\mathcal R_1(X){\cdot}f=\Theta_1(X){\cdot} f,\quad
L\mathcal R(X){\cdot}f=L_e{\cal R}(X){\cdot}f=\Theta(X){\cdot} f.
$$

The ${\mathcal{R}(X)}$-codimension of $f$  is given by
$d(f,{\mathcal{R}(X)})=\dim_{\mathbb{R}}({\cal M}_3/L{\mathcal{R}(X)}(f))\,.$

The classification (i.e., the listing of representatives of the
orbits) of $\mathcal R(X)$-finitely determined germs is carried out
inductively on the jet level. The method used here is that of the
complete transversal \cite{bkd} adapted for the $\mathcal
R(X)$-action.
We have the following result which is a version of
Theorem 3.11 in \cite{brucewest} for the group $\mathcal R(X)$.

\begin{prop}\label{prop:completeTrans}
Let $f:\mathbb R^3,0\to \mathbb R,0$ be a smooth germ and
$h_1,\ldots, h_r$ be homogeneous polynomials of degree $k+1$ with
the property that
$$
\mathcal{M}_3^{k+1} \subset L\mathcal R_1(X){\cdot}f + sp\{
h_1,\ldots, h_r\} +\mathcal{M}_3^{k+2}.
$$
Then any germ $g$ with $j^kg(0)=j^kf(0)$ is $\mathcal
R_1(X)$-equivalent to a germ of the form $f(x)+\sum_{i=1}^l
u_ih_i(x)+\phi(x) $, where $\phi(x)\in \mathcal M_n^{k+2}$. The
vector subspace $sp\{ h_1,\ldots, h_r\}$ is called a complete
$(k+1)$-$\mathcal R(X)$-transversal of $f$.
\end{prop}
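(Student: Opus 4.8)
The plan is to run the complete transversal argument of \cite{bkd} (in the form of Theorem~3.11 of \cite{brucewest}), realising the required coordinate change as the time-one flow of a vector field in $\Theta_1(X)$. We may assume $f(0)=0$. Let $g$ satisfy $j^kg(0)=j^kf(0)$, so $g-f\in\mathcal M_3^{k+1}$; write $p$ for the homogeneous part of degree $k+1$ of $g-f$, so that $g=f+p+\varepsilon_0$ with $p\in\mathcal M_3^{k+1}$ and $\varepsilon_0\in\mathcal M_3^{k+2}$. Applying the hypothesis to $p$, and using $L\mathcal R_1(X){\cdot}f=\Theta_1(X){\cdot}f$, gives a decomposition
$$p=\eta(f)+\sum_{i=1}^r u_ih_i+\rho,\qquad \eta\in\Theta_1(X),\ \ u_i\in\mathbb R,\ \ \rho\in\mathcal M_3^{k+2}.$$
Because $p$ and the $h_i$ are homogeneous of degree $k+1$ and $\rho\in\mathcal M_3^{k+2}$, the field $\eta$ produced here automatically satisfies $\eta(f)=p-\sum_iu_ih_i-\rho\in\mathcal M_3^{k+1}$, with $j^{k+1}(\eta(f))=p-\sum_iu_ih_i$. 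This observation is the pivot of the whole argument.

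First I would check that the time-one flow of $\eta$ lies in $\mathcal R_1(X)$. The field $\eta\in\Theta_1(X)\subseteq\Theta(X)$ vanishes at $0$ together with its $1$-jet, so its local flow $\Phi_t$ is a well-defined germ of diffeomorphism for $|t|\le 1$, fixes $0$, and satisfies $D\Phi_t(0)=\exp\big(tD\eta(0)\big)=\mathrm{id}$, hence $j^1\Phi_t=\mathrm{id}$. Tangency of $\eta$ to $X=\{h=0\}$, $h=v^3-w^2$, means $\eta h=\lambda h$ for some $\lambda\in\mathcal E_3$; thus $t\mapsto h\circ\Phi_t$ solves the homogeneous linear ordinary differential equation $\tfrac{d}{dt}(h\circ\Phi_t)=(\lambda\circ\Phi_t)\,(h\circ\Phi_t)$ and therefore vanishes identically on $X$, so $\Phi_t(X)=X$. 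Hence $\Phi:=\Phi_{-1}\in\mathcal R_1(X)$, and $g\sim_{\mathcal R_1(X)}g\circ\Phi$.

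The remaining step is the jet computation. From $\tfrac{d}{dt}(G\circ\Phi_t)=(\eta G)\circ\Phi_t$ and Taylor's formula with integral remainder one gets, for each $N$, $j^N(G\circ\Phi_{-1})=j^N\big(\sum_{m\ge 0}\tfrac{(-1)^m}{m!}\eta^{\,m}G\big)$, with the sum finite modulo $\mathcal M_3^{N+1}$. The elementary fact that makes this work — and the reason the statement is phrased for $\mathcal R_1(X)$ rather than $\mathcal R(X)$ — is that, since $j^1\eta=0$, every $\delta\in\Theta_1(X)=\mathcal M_3^2.\{\xi_1\}+\mathcal M_3.\{\xi_2,\xi_3\}$ raises order, $\delta(\mathcal M_3^j)\subseteq\mathcal M_3^{j+1}$, as one reads off the generators in Proposition \ref{prop:genO(X)}. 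Hence $\eta(g)=\eta(f)+\eta(p)+\eta(\varepsilon_0)$ with $\eta(f)\in\mathcal M_3^{k+1}$, $\eta(p)\in\mathcal M_3^{k+2}$, $\eta(\varepsilon_0)\in\mathcal M_3^{k+3}$, so $\eta(g)\equiv p-\sum_iu_ih_i\pmod{\mathcal M_3^{k+2}}$, while $\eta^m(g)\in\mathcal M_3^{k+2}$ for all $m\ge 2$. Taking $N=k+1$ then gives $g\circ\Phi\equiv g-\eta(g)\equiv (f+p)-\big(p-\sum_iu_ih_i\big)=f+\sum_iu_ih_i\pmod{\mathcal M_3^{k+2}}$, i.e. $g\circ\Phi=f+\sum_{i=1}^r u_ih_i+\phi$ with $\phi\in\mathcal M_3^{k+2}$; dropping the terms with $u_i=0$ and relabelling puts this in the asserted form (with $l\le r$).

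None of the three steps is deep, and there is no analytic difficulty to fear. The one point that needs care is the bookkeeping in the last step: one has to see that the order-raising property of $\Theta_1(X)$ matches the filtration jump $k+1\mapsto k+2$ precisely, so that the quadratic and higher corrections in the flow expansion of $g\circ\Phi$ fall into $\mathcal M_3^{k+2}$ and may be discarded. This is exactly the place where the explicit generators $\xi_1,\xi_2,\xi_3$ of Proposition \ref{prop:genO(X)} are used, and it is what forces one to work with the unipotent subgroup $\mathcal R_1(X)$.
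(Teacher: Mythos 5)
Your argument is correct. The paper itself gives no proof of this proposition --- it is stated as ``a version of Theorem 3.11 in \cite{brucewest}'' and rests on the complete transversal method of \cite{bkd} --- and what you have written is a correct, self-contained rendering of exactly the argument underlying those citations: the hypothesis yields $\eta\in\Theta_1(X)$ with $j^{k+1}(\eta(f))=p-\sum_iu_ih_i$, the flow of $\eta$ lies in $\mathcal R(X)$ with identity $1$-jet, and the Lie-series expansion of $g\circ\Phi_{-1}$ collapses modulo $\mathcal M_3^{k+2}$ because every element of $\Theta_1(X)=\mathcal M_3^2.\{\xi_1\}+\mathcal M_3.\{\xi_2,\xi_3\}$ raises order. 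You have correctly isolated the two points that make the one-step flow suffice (so that the general unipotent-group machinery of \cite{bkd} is not needed explicitly): the order-raising property forces $\eta(g)\equiv\eta(f)$ and $\eta^m(g)\equiv 0$ for $m\ge 2$ modulo $\mathcal M_3^{k+2}$, i.e.\ the tangent spaces at $f$ and at $g$ agree to the relevant order.
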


\begin{cor}\label{cor:detrminacy} If $ \mathcal{M}_3^{k+1}\subset L\mathcal R_1(X){\cdot}f
+\mathcal{M}_3^{k+2}$ then $f$ is $k-\mathcal{R}(X)$-determined.
\end{cor}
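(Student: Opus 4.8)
The plan is to derive this finite determinacy criterion from the complete transversal statement in Proposition \ref{prop:completeTrans} together with a standard Mather--type "determinacy follows from surjectivity of the infinitesimal action modulo higher order" argument, adapted to the group $\mathcal R(X)$. First I would observe that the hypothesis $\mathcal M_3^{k+1}\subset L\mathcal R_1(X)\cdot f+\mathcal M_3^{k+2}$ is precisely the hypothesis of Proposition \ref{prop:completeTrans} with the complete $(k+1)$-transversal taken to be the zero subspace (i.e.\ $r=0$, $\mathrm{sp}\{h_1,\dots,h_r\}=\{0\}$). Hence any germ $g$ with $j^kg(0)=j^kf(0)$ is $\mathcal R_1(X)$-equivalent to $f+\phi$ with $\phi\in\mathcal M_3^{k+2}$. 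So it remains to show that, under the stated hypothesis, $f+\phi\sim_{\mathcal R(X)}f$ for every $\phi\in\mathcal M_3^{k+2}$; in fact it suffices to handle $\phi\in\mathcal M_3^{k+2}$ since any $g$ with the same $k$-jet has already been normalised to this form.

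Next I would promote the hypothesis from level $k+1$ to all higher levels. The key algebraic point is that if $\mathcal M_3^{k+1}\subset L\mathcal R_1(X)\cdot f+\mathcal M_3^{k+2}$, then multiplying through by $\mathcal M_3$ and using that $\Theta_1(X)$ is an $\mathcal E_3$-module (so $\mathcal M_3\cdot(\Theta_1(X)\cdot f)\subset \Theta_1(X)\cdot f=L\mathcal R_1(X)\cdot f$) gives $\mathcal M_3^{k+2}\subset L\mathcal R_1(X)\cdot f+\mathcal M_3^{k+3}$, and inductively $\mathcal M_3^{m}\subset L\mathcal R_1(X)\cdot f+\mathcal M_3^{m+1}$ for all $m\ge k+1$. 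By Nakayama's lemma applied degree by degree (the quotients $\mathcal M_3^{m}/\mathcal M_3^{m+N}$ are finite-dimensional), one concludes $\mathcal M_3^{k+1}\subset L\mathcal R_1(X)\cdot f + \mathcal M_3^{m}$ for every $m$, and more to the point that for each $m\ge k+2$ the "homotopy" equation can be solved at that order.

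Then I would run the standard homotopy (Thom--Levin) argument within the group $\mathcal R(X)$. Set $F_t=f+t\phi$ for $t\in[0,1]$ and seek a smooth family $\theta_t\in\mathcal R(X)$ with $\theta_0=\mathrm{id}$ and $F_t\circ\theta_t = f$; differentiating in $t$ reduces this to solving, at each $t$, an equation of the form $\eta_t(F_t) = -\phi$ with $\eta_t$ a vector field tangent to $X$ vanishing to order $\ge k+1$ (so that the generated diffeomorphisms preserve $X$ and fix the $k$-jet). Since $\phi\in\mathcal M_3^{k+2}$ and, by the bootstrapped hypothesis, $\mathcal M_3^{k+2}\subset \Theta_1(X)\cdot f + \mathcal M_3^{m}$ for all $m$, one solves this equation formally to all orders and then invokes Artin/Tougeron-type approximation (or the fact that $\mathcal R(X)$ is a geometric subgroup of $\mathcal R$ in the sense of Damon, for which the formal-implies-analytic/smooth determinacy machinery applies) to get a genuine solution. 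Integrating $\eta_t$ produces the required family of $X$-preserving diffeomorphisms, establishing $f+\phi\sim_{\mathcal R(X)}f$ and hence that $f$ is $k$-$\mathcal R(X)$-determined.

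The main obstacle is the last step: ensuring that the infinitesimal solution of the homotopy equation can be integrated to an honest element of $\mathcal R(X)$ — that is, that the vector fields we use genuinely stay tangent to $X$ and that the formal solvability upgrades to smooth solvability. This is exactly where one needs $\mathcal R(X)$ to be a geometric subgroup (Damon's theory), so that the unified determinacy theorem applies; the module-finiteness of $\Theta(X)$ from Proposition \ref{prop:genO(X)} and the description of $\Theta_1(X)$ are what make this hypothesis verifiable. Everything else — reducing to $\phi\in\mathcal M_3^{k+2}$ via the complete transversal, and the Nakayama bootstrap — is routine.
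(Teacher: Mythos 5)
The paper gives no proof of this corollary at all: it is stated as an immediate consequence of Proposition \ref{prop:completeTrans}, i.e.\ of the unipotent determinacy machinery of Bruce--du Plessis--Wall (cited elsewhere as \cite{bdw}) specialised to the strongly closed unipotent subgroup $\mathcal R_1(X)\subset\mathcal R(X)$. Your proposal reconstructs exactly that standard proof --- empty complete transversal, Nakayama bootstrap, Thom--Levin homotopy integrated inside $\Theta_1(X)$ --- so in structure it is the ``right'' argument and certainly consistent with what the authors intend.

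One step needs tightening. Applying Nakayama ``degree by degree'' only yields $\mathcal M_3^{k+1}\subset L\mathcal R_1(X)\cdot f+\mathcal M_3^{m}$ for every $m$, which in the smooth category is a statement about jets and leaves flat remainders unaccounted for; Artin/Tougeron approximation, being an analytic tool, does not by itself bridge that gap for $C^\infty$ germs. The correct move is to apply Nakayama once to the finitely generated $\mathcal E_3$-module $(\mathcal M_3^{k+1}+L\mathcal R_1(X)\cdot f)/L\mathcal R_1(X)\cdot f$, using that $L\mathcal R_1(X)\cdot f=\Theta_1(X)\cdot f$ is an $\mathcal E_3$-submodule of $\mathcal E_3$ and that the hypothesis reads $\mathcal M_3^{k+1}\subset L\mathcal R_1(X)\cdot f+\mathcal M_3\cdot\mathcal M_3^{k+1}$; this gives the full inclusion $\mathcal M_3^{k+1}\subset L\mathcal R_1(X)\cdot f$. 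With that in hand the homotopy equation $\eta_t(F_t)=-\phi$ for $F_t=f+t\phi$, $\phi\in\mathcal M_3^{k+2}$, is solved directly (uniformly in $t$, via the Malgrange preparation theorem applied to the family), the solutions lie in $\Theta_1(X)$ and hence integrate to $X$-preserving diffeomorphisms fixing the $k$-jet, and no approximation theorem is needed. Alternatively you may simply quote the Bruce--du Plessis--Wall unipotency theorem verbatim, which is what the paper implicitly does. With this repair your argument is complete.
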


We also need the following  result about trivial families.

 \begin{prop}{\rm (\cite{brucewest})} \label{prop:trivialfam}
Let $F:\mathbb R^3\times \mathbb R,(0,0)\to \mathbb R,0$ be a smooth
family of functions with $F(0,t)=0$ for $t$ small. Let
$\xi_1,\ldots,\xi_p$ be vector fields in $\Theta(X)$ vanishing at
$0\in \mathbb R^n$. Then the family $F$ is $k-\mathcal R(X)$-trivial if
$\frac{\partial F}{\partial t}\in \left\langle  \xi_1(F),\ldots,\xi_p(F) \right\rangle +\mathcal M^{k+1}_n.
$
\end{prop}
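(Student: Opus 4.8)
The plan is to follow the standard homotopy/Thom-Levin argument for trivial families, adapted to the geometric subgroup $\mathcal R(X)$. First I would reformulate the statement infinitesimally: the family $F$ is $k$-$\mathcal R(X)$-trivial if and only if there is a family of diffeomorphisms $k_t:\mathbb R^3,0\to \mathbb R^3,0$, preserving $X$, with $k_0=\mathrm{id}$, such that $F(k_t(z),t)\equiv F(z,0)$ modulo $\mathcal M_3^{k+1}$. Differentiating this relation in $t$ leads one to seek a time-dependent vector field $\eta_t\in\Theta(X)$, vanishing at $0$, with $\frac{\partial F}{\partial t}+\eta_t(F)\in\mathcal M_3^{k+1}$ for all $t$; such a vector field integrates (since it vanishes at $0$ and is tangent to $X$) to the desired path $k_t$ in $\mathcal R(X)$. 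So the hypothesis $\frac{\partial F}{\partial t}\in\langle\xi_1(F),\ldots,\xi_p(F)\rangle+\mathcal M_3^{k+1}$ is precisely what lets us solve for $\eta_t$ as an $\mathcal E_3$-combination of the $\xi_i$.

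Next I would carry out the integration step carefully. Write $\frac{\partial F}{\partial t}=\sum_{i=1}^p c_i(z,t)\,\xi_i(F)+R(z,t)$ with $R\in\mathcal M_3^{k+1}$ (one should note that the membership in the hypothesis should be read as holding for the family, i.e.\ with coefficients smooth in $(z,t)$; this follows from the standard observation that finitely generated modules over $\mathcal E_{n+1}$ behave well, or by applying the division/preparation argument parametrically). Set $\eta_t=-\sum_i c_i(\cdot,t)\,\xi_i$. Since each $\xi_j\in\Theta(X)$ vanishes at $0$ by assumption, $\eta_t$ is a time-dependent vector field in $\Theta(X)$ vanishing at the origin, hence its flow $k_t$ is defined on a neighbourhood of $0$ for $t$ near $0$, fixes $0$, satisfies $k_0=\mathrm{id}$, and preserves $X$ because $\Theta(X)$ is exactly the module of vector fields tangent to $X$ (so its flow maps $X$ to $X$). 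Then $\frac{d}{dt}F(k_t(z),t)=\big(\frac{\partial F}{\partial t}+(dF)(\eta_t)\big)(k_t(z),t)$, and by construction the bracketed term lies in $\mathcal M_3^{k+1}$; pulling back by the diffeomorphism $k_t$ preserves the ideal $\mathcal M_3^{k+1}$ (since $k_t$ fixes $0$), so $\frac{d}{dt}F(k_t(z),t)\in\mathcal M_3^{k+1}$ uniformly in $t$. Integrating from $0$ to $t$ gives $F(k_t(z),t)-F(z,0)\in\mathcal M_3^{k+1}$, which is exactly $k$-$\mathcal R(X)$-triviality.

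The main obstacle, and the only genuinely non-formal point, is the passage from the pointwise-in-$t$ module membership in the hypothesis to a \emph{smooth in $(z,t)$} choice of coefficients $c_i(z,t)$; this is the usual ``parametrised Nakayama / Malgrange preparation'' subtlety that one must either invoke from the cited reference \cite{brucewest} or dispatch by remarking that $\langle\xi_1(F),\ldots,\xi_p(F)\rangle+\mathcal M_3^{k+1}$, viewed in $\mathcal E_{3}$ with parameter $t$, is a finitely generated $\mathcal E_{3+1}$-submodule of $\mathcal E_{3}[[t]]$ (or $\mathcal E_{3+1}$) and the inclusion of $\frac{\partial F}{\partial t}$ in it, holding for each fixed small $t$, can be upgraded to a smooth family of solutions. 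A clean way to avoid this entirely is to quote Proposition~\ref{prop:trivialfam} directly as stated in \cite{brucewest} and merely verify that the $\xi_i$ produced by Proposition~\ref{prop:genO(X)} — after multiplying by appropriate elements of $\mathcal M_3$ to make them vanish at $0$, as in the description of $\Theta_1(X)$ — satisfy its hypotheses; indeed, since the proposition is attributed to \cite{brucewest}, the intended proof is essentially ``this is Bruce--West's general lemma applied verbatim to $X=\{v^3-w^2=0\}$'', and the work above is the sketch of why that general lemma holds.
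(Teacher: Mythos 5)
Your argument is correct and is exactly the standard Thom--Levine integration argument that underlies the cited lemma; the paper itself gives no proof, quoting the result directly from \cite{brucewest}, so there is nothing to diverge from. The ``non-formal point'' you flag is not really an obstacle: the hypothesis is meant to be read as membership in the $\mathcal E_{n+1}$-module generated by the $\xi_i(F)$ (coefficients smooth in $(z,t)$), after which your construction of $\eta_t$, its flow preserving $X$ (via $\xi h=\lambda h$), and the integration modulo $\mathcal M_3^{k+1}$ go through verbatim.
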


Two families of germs of functions $F$ and $G:(\mathbb R^3 \times \mathbb R^l,(0,0))\to (\mathbb{R},0)$  are
\mbox{$P$-$\mathcal{R}^+(X)$}-equivalent if
there exist a germ of a diffeomorphism
$\Phi:(\mathbb R^3 \times \mathbb R^l,(0,0)) \to (\mathbb R^3 \times \mathbb R^l,(0,0))$
preserving $(X\times \mathbb R^l,(0,0))$ and of the form
$\Phi(x,u)=(\alpha(x,u),\psi(u))$ and a germ of a function $c:(\mathbb R^l,0) \to\mathbb R$
such that
$
G(x,u) = F(\Phi(x,u)) + c(u).
$

A family $F$ is said to be an $\mathcal{R}^+(X)$-versal deformation of $F_0(x)=F(x,0)$ if any other deformation $G$ of $F_0$
can be written in the form  $G(x,u) = F(\Phi(x,u)) + c(u)$ for some germs of
smooth mappings $\Phi$ and $c$ as above with $\Phi$
not necessarily a germ of diffeomorphism.

Given a family of germs of functions $F$, we write
$\dot{F}_i(x)=\frac{\partial F}{\partial u_i}(x,0).$

\begin{prop}\label{theo:InfcondUnfFunc}
A deformation $F:(\mathbb R^3 \times \mathbb R^l,(0,0))\to (\mathbb{R},0)$ of a germ of a function $f$
on $X$ is
$\mathcal{R}^+(X)$-versal if and only if
$$
L\mathcal R_e(X)\cdot{}f + \mathbb R.\left\{1, \dot{F}_1,\ldots,\dot{F}_l\right\}= \mathcal{E}_n.
$$
\end{prop}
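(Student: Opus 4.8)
This is the versality (infinitesimal transversality) theorem for the geometric subgroup $\mathcal{R}^{+}(X)$ of $\mathcal{R}$. Since $X=\{h=0\}$ with $h=v^{3}-w^{2}$ a reduced hypersurface, $\mathcal{R}(X)$ is a geometric subgroup in Damon's sense (this is exactly the framework of \cite{brucewest,damon}), so the statement is an instance of the general unfolding theorem; below I sketch the two classical halves of the argument specialised to $\mathcal{R}^{+}(X)$, using only Propositions \ref{prop:genO(X)} and \ref{prop:trivialfam}. Throughout I use $L\mathcal{R}_{e}(X)\cdot f=\Theta(X)\cdot f$ and, as is standard, assume all deformations and inducing data are based at the origin.

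\textbf{Necessity.} Let $g\in\mathcal{E}_{3}$ be arbitrary and apply the versality of $F$ to the one-parameter deformation $G(x,s)=f(x)+sg(x)$ of $f$: there are $\Phi(x,s)=(\alpha(x,s),\psi(s))$ preserving $(X\times\mathbb{R},0)$ with $\Phi(x,0)=(x,0)$, and $c$ with $c(0)=0$, such that $f(x)+sg(x)=F(\alpha(x,s),\psi(s))+c(s)$. Differentiating at $s=0$ and using $\alpha(\cdot,0)=\mathrm{id}$, $\psi(0)=0$ gives
$$
g=\sum_{i}v_{i}\,\frac{\partial f}{\partial x_{i}}+\sum_{j=1}^{l}\lambda_{j}\dot F_{j}+\mu ,
$$
where $v=\partial\alpha/\partial s(\cdot,0)$ is a vector field, $\lambda_{j}=\psi_{j}'(0)\in\mathbb{R}$ and $\mu=c'(0)\in\mathbb{R}$. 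Because $\Phi$ preserves $X\times\mathbb{R}$ we have $h(\alpha(x,s))\equiv0$ for $x\in X$; differentiating in $s$ at $s=0$ shows that $v(h)$ vanishes on $X$, hence $v(h)\in(h)$ as $X$ is reduced, i.e.\ $v\in\Theta(X)$. Thus $\sum_{i}v_{i}f_{x_{i}}=v(f)\in\Theta(X)\cdot f$, so $g\in\Theta(X)\cdot f+\mathbb{R}.\{1,\dot F_{1},\dots,\dot F_{l}\}$. As $g$ was arbitrary, the displayed equality of the Proposition follows.

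\textbf{Sufficiency.} Assume $\Theta(X)\cdot f+\mathbb{R}.\{1,\dot F_{1},\dots,\dot F_{l}\}=\mathcal{E}_{3}$; in particular $\Theta(X)\cdot f$ has finite codimension, so $f$ is $\mathcal{R}(X)$-finite. Given any deformation $G$ of $f$ over $\mathbb{R}^{m}$, set $\Psi(x,u,v)=F(x,u)+G(x,v)-f(x)$, a deformation of $f$ over $\mathbb{R}^{l+m}$ with $\Psi(\cdot,\cdot,0)=F$ and $\Psi(\cdot,0,\cdot)=G$. It suffices to show that $\Psi$ is $P$-$\mathcal{R}^{+}(X)$-trivial in the extra parameters $v$, for then restricting the trivialisation to $u=0$ presents $G$ as induced from $F$ (the source change $\tilde\alpha(\cdot,0,v)$ preserving $X$). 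By induction on $m$ one reduces to $m=1$, writing $t=v$. Following the path method, one seeks $\xi(x,u,t)\in\Theta(X)$ parametrically, a vector field $w(u,t)$ in the $u$-directions and a function $e(u,t)$ with
$$
\frac{\partial\Psi}{\partial t}=\sum_{i}\xi_{i}\,\frac{\partial\Psi}{\partial x_{i}}+\sum_{j}w_{j}\,\frac{\partial\Psi}{\partial u_{j}}+e
$$
near $0$; integrating $\partial_{t}+\sum_{i}\xi_{i}\partial_{x_{i}}+\sum_{j}w_{j}\partial_{u_{j}}$, which is tangent to $X\times\mathbb{R}^{l+1}$ since $\xi\in\Theta(X)$ (generated by the fields of Proposition \ref{prop:genO(X)}), and accumulating $e$ into the additive term, yields the trivialisation — this is the $C^{\infty}$ analogue of Proposition \ref{prop:trivialfam}. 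At $(u,t)=0$ the equation reads $\partial\Psi/\partial t(\cdot,0,0)=\xi(\cdot,0)(f)+\sum_{j}w_{j}(0)\dot F_{j}+e(0)$, which is solvable exactly by the hypothesis since $\partial\Psi/\partial t(\cdot,0,0)\in\mathcal{E}_{3}$.

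\textbf{The main obstacle} is the upgrade from this pointwise solvability to solvability in a full neighbourhood of $0$ in the $(u,t)$-space. One considers $A=\mathcal{E}_{(x,u,t)}$ modulo the submodule $\Theta(X)\cdot\Psi+\sum_{j}\mathcal{E}_{(u,t)}.\Psi_{u_{j}}+\mathcal{E}_{(u,t)}.1$; $A$ is a cyclic $\mathcal{E}_{(x,u,t)}$-module, and $A/\mathcal{M}_{(u,t)}A$ is a quotient of $\mathcal{E}_{3}/(\Theta(X)\cdot f+\sum_{j}\mathbb{R}.\dot F_{j}+\mathbb{R}.1)=0$ by the hypothesis, so Mather's preparation theorem forces $A=0$ near $0$; in particular $\Psi_{t}$ lies in that submodule, which is the parametrised equation sought. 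One must also check that $\xi\mapsto\xi(\Psi)$ is compatible with the parametrised $\mathcal{R}(X)$-action — that $\mathcal{R}(X)$ really is a geometric subgroup — which for the reduced hypersurface $X$ is provided by \cite{brucewest,damon}. Granting these, integration of the resulting vector field and restriction to $u=0$ complete the proof.
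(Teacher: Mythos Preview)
The paper does not supply a proof of this proposition: it is stated as a known fact, the infinitesimal versality criterion for the geometric subgroup $\mathcal R^{+}(X)$, and the paper implicitly defers to Damon's general theory \cite{damon} and to \cite{brucewest}. So there is nothing in the paper to compare your argument against; what you have written is a reconstruction of the standard proof, and in outline it is correct.

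Two small points are worth tightening. In the necessity direction you assume the inducing data are based ($\alpha(\cdot,0)=\mathrm{id}$, $\psi(0)=0$, $c(0)=0$); this is indeed the usual convention, but since the paper's definition does not spell it out you are right to flag it explicitly. In the sufficiency direction your use of the preparation theorem is slightly misstated: the quotient $A=\mathcal E_{(x,u,t)}\big/\bigl(\Theta(X)\cdot\Psi+\sum_j\mathcal E_{(u,t)}\Psi_{u_j}+\mathcal E_{(u,t)}\cdot 1\bigr)$ is \emph{not} an $\mathcal E_{(x,u,t)}$-module (the last two summands are only $\mathcal E_{(u,t)}$-submodules), so calling it a cyclic $\mathcal E_{(x,u,t)}$-module is incorrect. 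The clean way is the usual two-step argument: first apply preparation to $B=\mathcal E_{(x,u,t)}/\Theta(X)\cdot\Psi$, which \emph{is} an $\mathcal E_{(x,u,t)}$-module with $B/\mathcal M_{(u,t)}B\cong\mathcal E_3/\Theta(X)\cdot f$ finite-dimensional, to conclude that $B$ is finitely generated over $\mathcal E_{(u,t)}$ by the classes of $1,\Psi_{u_1},\dots,\Psi_{u_l}$; then the desired equation $\Psi_t\in\Theta(X)\cdot\Psi+\mathcal E_{(u,t)}\{1,\Psi_{u_1},\dots,\Psi_{u_l}\}$ follows immediately. With that correction your sketch is sound and is exactly the argument the references \cite{brucewest,damon} supply.
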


We can now state the result about the  $\mathcal R(X)$-classification of germs of submersions.

\begin{theo} \label{theo:Classification}
Let $X$ be the germ of the $\mathcal A$-model of the cuspidal edge parametrised by
$f(x,y,z)=(x,y^2,y^3)$. Denote by $(u,v,w)$ the coordinates in the target.
Then any $\mathcal R(X)$-finitely determined germ of a submersion in $\mathcal M_3$
with $\mathcal R(X)$-codimension $\le 2$
(of the stratum in the presence of moduli)  is
$\mathcal R(X)$-equivalent to one of the germs in {\rm Table \ref{tab:germsubm}}.

\begin{table}[ht]
\caption{Germs of submersions in $\mathcal M_3$ of $\mathcal R(X)$-codimension $\le 2$.}
\begin{center}
{\begin{tabular}{lcl}
\hline
Normal form & $d(f,\mathcal{R}(X))$ &$\mathcal{R}^+(X)$-versal deformation\\
\hline
$u$ & $0$ &$u$\\
$\pm v\pm u^2$ & $0$&$\pm v\pm u^2$\\
$\pm v+ u^3$& $1$&$\pm v+u^3+a_1u$\\
$\pm v\pm u^4$& $2$&$\pm v\pm u^4+a_{2}u^{2}+a_1u$\\
$w+u^2$ &$1$&$w+u^2+a_1v$\\
$w+ uv+au^3$, $a\ne 0,-\frac{4}{27}$ &$2^{(*)}$&
 $w+uv+au^2+a_2u^2+a_1u$\\
\hline
\end{tabular}
}
\\
{\footnotesize
$(*)$: $a$ is a modulus and the codimension is that of the stratum.}
\end{center}
\label{tab:germsubm}
\end{table}

\end{theo}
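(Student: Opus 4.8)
The plan is to classify the submersions inductively on the jet order, applying at each step the complete transversal method of Proposition~\ref{prop:completeTrans} for the unipotent subgroup $\mathcal R_1(X)$, reducing the finite list of jets so obtained modulo the linear action of $\mathcal R(X)/\mathcal R_1(X)$, and closing off with the finite-determinacy Corollary~\ref{cor:detrminacy}. The observation that turns all the codimension and versality bookkeeping into short ideal computations is this: since $\Theta(X)$ is the $\mathcal E_3$-module generated by $\xi_1,\xi_2,\xi_3$ (Proposition~\ref{prop:genO(X)}), the extended tangent space $L\mathcal R_e(X)\cdot f=\Theta(X)\cdot f$ is exactly the ideal $(\xi_1f,\xi_2f,\xi_3f)\subset\mathcal E_3$. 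Hence $d(f,\mathcal R(X))=\dim_{\mathbb R}\mathcal M_3/(\xi_1f,\xi_2f,\xi_3f)$, and by Proposition~\ref{theo:InfcondUnfFunc} a deformation is $\mathcal R^+(X)$-versal precisely when $1,\dot F_1,\dots,\dot F_l$ span $\mathcal E_3$ modulo that ideal.

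I would start with the $1$-jets. A linear diffeomorphism preserving $X=\{v^3-w^2=0\}$ must have the form $(u,v,w)\mapsto(\nu u+pv+qw,\ tv+cw,\ \pm t^{3/2}w)$ with $\nu\ne0$ and $t>0$ (forced by matching the lowest homogeneous parts in $h\circ k=\lambda h$), while $\mathcal R_1(X)$ fixes all $1$-jets; a brief computation then reduces every $1$-jet of a submersion to exactly one of $u,v,-v,w$, the sign on $v$ being essential since $v\mapsto-v$ does not preserve $X$, whereas $w\mapsto-w$ does, which identifies the $w$ and $-w$ cases. For $j^1f=u$, $f_u=\xi_1f$ is a unit, so $\mathcal M_3^2\subset L\mathcal R_1(X)\cdot f$ and $f\sim_{\mathcal R(X)}u$. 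For $j^1f=\pm v$, the leading parts $\xi_2f\equiv\pm2v$ and $\xi_3f\equiv\pm2w$ make the complete $(k{+}1)$-transversal equal to $sp\{u^{k+1}\}$ at every stage; normalising the surviving coefficient by $u\mapsto\mu u$ yields $\pm v\pm u^2$, $\pm v+u^3$ (the cubic sign is removable), $\pm v\pm u^4$, and nothing else of $\mathcal R(X)$-codimension $\le2$ (the case with no pure power of $u$ is not finitely determined). For $j^1f=w$ the complete $2$-transversal is $sp\{u^2,uv,v^2\}$; using $u\mapsto u+\alpha v$ to complete the square and the trivial family Proposition~\ref{prop:trivialfam} to absorb the leftover $v^2$-term (which lies in $(\xi_1f,\xi_2f,\xi_3f)$ because $\xi_3f\equiv3v^2$, and $w+u^2$ is $2$-determined), one gets $w+u^2$ when the coefficient of $u^2$ is nonzero; when it vanishes but that of $uv$ does not, normalising gives $w+uv+\cdots$, the complete $3$-transversal is $sp\{u^3\}$, and one arrives at $w+uv+au^3$. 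Here $\xi_1f=v+3au^2$ combines with the other two generators to put $\tfrac{a}{3}(27a+4)u^4$ into the ideal, so $u^4$ enters it exactly when $a\ne0,-\tfrac{4}{27}$, and then $\mathcal E_3/(\xi_1f,\xi_2f,\xi_3f)=\mathbb R\{1,u,u^2,u^3\}$ with the $u^3$-direction tangent to the stratum $\{a=\mathrm{const}\}$; thus $a$ is a modulus and the stratum has codimension $2$, while $a=0$ (not finitely determined) and $a=-\tfrac{4}{27}$ (codimension $>2$) drop out, as do the remaining subcases where the coefficient of $uv$ also vanishes. Finally, for each normal form the versal deformation is read off from a basis of $\mathcal E_3/(\xi_1f,\xi_2f,\xi_3f)$: for instance $(u,w,v^2)$ for $w+u^2$ gives $\mathbb R\{1,v\}$ and the deformation $w+u^2+a_1v$, while $(u^3,v,w)$ for $\pm v\pm u^4$ gives $\mathbb R\{1,u,u^2\}$ and $\pm v\pm u^4+a_2u^2+a_1u$.

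The two places where the real work sits are the following. The $j^1f=w$ branch requires showing that the three-dimensional $2$-transversal $sp\{u^2,uv,v^2\}$ really collapses to the single $\mathcal R(X)$-invariant ``coefficient of $u^2$ zero or not'': one must use the linear substitution $u\mapsto u+\alpha v$ together with the nonlinear fields $\xi_2,\xi_3$ (leading terms $3w$, $3v^2$) in tandem, and each elimination has to be legitimised through the trivial-family lemma rather than only at the infinitesimal level. And within the $w+uv+au^3$ stratum one has to confirm that $a$ genuinely cannot be removed inside $\mathcal R(X)$ — the obstruction being that there is no independent rescaling of $v$, the rescalings of $v$ and $w$ being coupled through $v^3-w^2$ — and to identify $a=-\tfrac{4}{27}$ as the value at which the relation $\tfrac{a}{3}(27a+4)u^4\in(\xi_1f,\xi_2f,\xi_3f)$ degenerates and $u^4$ fails to enter the ideal.
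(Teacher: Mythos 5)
Your proposal is correct and follows essentially the same route as the paper: induction on the jet level via the complete transversal for $\mathcal R_1(X)$, reduction of jets by the $1$-jets of $\mathcal R(X)$ (the paper's $\eta_1,\eta_2,\eta_3$), triviality/Mather arguments to collapse the $j^1f=w$ transversal to the dichotomy on the coefficient of $u^2$, and the factor $a(27a+4)u^4$ isolating the exceptional values $a=0,-\tfrac{4}{27}$, with codimensions and versal deformations read off from the ideal $(\xi_1f,\xi_2f,\xi_3f)$. One small caveat: $(u,v,w)\mapsto(u,v+cw,w)$ with $c\ne 0$ is not itself an $X$-preserving \emph{linear} diffeomorphism (it moves points $(u,t^2,t^3)$ off $X$), but it is the $1$-jet of one, namely of the flow of $\xi_3$, which is exactly what is needed to act on jet spaces, so your $1$-jet classification stands.
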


\begin{proof} To simplify notation, we write complete $k$-transversal for
complete $k-\mathcal R(X)$-transversal, equivalence for $\mathcal
R(X)$-equivalence and codimension for $\mathcal R(X)$-codimension. In all the proof, $\xi_1, \xi_2,\xi_3$ are as in Proposition \ref{prop:genO(X)}.

The linear changes of coordinates in $\mathcal R(X)$ obtained by
integrating the 1-jets of vector fields in $\Theta(X)$ are
$$
\begin{array}{rcl}
\eta_1(u,v,w)&=&(\alpha u+\beta v+\gamma w,v,w), \alpha \ne 0\\
\eta_2(u,v,w)&=&(u,e^{2\alpha}v,e^{3\alpha}w), \alpha\in \mathbb R\\
\eta_3(u,v,w)&=&(u,v+\alpha w,w)
\end{array}
$$

Consider a non-zero 1-jet $g=au+bv+cw$. If $a\ne 0$, then $g$ is equivalent to $u$ and
its codimension is $0$.

Suppose that $a=0$. If $b\neq 0$, we use $\eta_3$ to set $c=0$ and $\eta_2$ to set $b=\pm 1$. If $a=b=0$ but $c\ne 0$ we can set
$c=\pm 1$. Observe that $(u,v,w)\mapsto (u,v,-w)$ preserves $X$, so we can set $c=1$. Thus, the orbits of submersions in
the 1-jet space are
$
u,\pm v,w.
$

$\bullet$ Consider the 1-jet $g=v$ (the results follow similarly for $g=-v$). Then $\xi_1(g)=0$, $\xi_2(g)=2v$
and $\xi_3(g)=2w$, so for any integer $k\ge 2$, $\mathcal M_{3}^k\subset L\mathcal R_1(X)\cdot{}g + sp\{u^k\} + \mathcal M_{3}^{k+1}$, that is, a
complete $k$-transversal is given by $g=v+\lambda u^k$. Using
$\eta_2$ we can set $\lambda=\pm 1$ if $\lambda\ne 0$ (if $k$ is odd we can set $\lambda =1$). For the germ $g=v\pm u^k$,
 we have
$
\xi_1(g)=\pm ku^{k-1},
\xi_2(g)=2v,
\xi_3(g)=2w.
$ Now $\mathcal M_{3}^{k+1}\subset L\mathcal R_1(X)\cdot{}g + \mathcal
M_{3}^{k+2}$, so $v\pm u^k$ is $k$-determined. Its codimension is
$k-2$, and clearly, $ v\pm u^k+a_{k-2}u^{k-2}+\ldots+a_1u_1$ is an $\mathcal R^+(X)$-versal
deformation.

\medskip
$\bullet$ We consider now the 1-jet $g=w$. We have $\xi_1(g)=0$,
$\xi_2(g)=3w$ and $\xi_3(g)=3v^2$.

A complete $2$-transversal is given by $g=w+\lambda_1u^2+\lambda_2uv+\lambda_3v^2$.
We can consider $g$ as a 1-parameter family of germs of functions
parametrised by $\lambda_3$. Then
$\partial g/\partial {\lambda_3}=v^2\in \langle \mathcal M_3
\xi_1(g),\xi_2(g),\xi_3(g) \rangle + \mathcal M_3^{3}$, so by
Proposition \ref{prop:trivialfam}, $g$ is equivalent to
$w+\lambda_1u^2+\lambda_2uv$.
We proceed similarly to show that $g$ is
trivial along $\lambda_2$ if $\lambda_1\ne 0$.
Thus, if $\lambda_1\ne 0$ we can take
$g=w+\lambda_1 u^2$. We can now set $\lambda_1=\pm
1$ by rescaling. For the germ $g=w\pm u^2$, we have
$
\xi_1(g)=\pm 2u,
\xi_2(g)=3w,
\xi_3(g)=3v^2,
$
so $\mathcal M_{3}^{3}\subset L\mathcal R_1(X)\cdot{}g + \mathcal
M_{3}^{4}$, that is, $g$ is $2$-determined.
An $\mathcal R^+(X)$-versal deformation is given $g=w\pm u^2+a_1v$ and the germ has codimension 1.

If $\lambda_1=0$ but $\lambda_2\ne 0$ we can set $\lambda_2=1$ and consider $g=w+uv$. Then
$\xi_1(g)=v$, $\xi_2(g)=2uv+3w$ and $\xi_3(g)=2wu+3v^2$, so $\mathcal M_{3}^3\subset L\mathcal
R_1(X)\cdot{}g + sp\{u^3\} + \mathcal M_{3}^{4}$, that is, a
complete $3$-transversal is given by $g=w+uv+a u^3$. Here $a$ is a parameter modulus.

For $g=w+uv+au^3$ we have
$\xi_1(g)=v+3au^{2},$ $\xi_2(g)=2uv+3w,$
$\xi_3(g)=2uw+3v^2.$
Using the vectors
$
u^2\xi_2(g)=2u^3v+3u^2w,
u\xi_3(g)=2u^2w+3uv^2,
uv\xi_1(g)=uv^2+3avu^{3}
$
which are in $\mathcal R_1(X)\cdot{}g$, we get
$u^2w$, $uv^2$ and $u^3v$ if $a\ne -\frac{4}{27}$. Then, using
$u^3\xi_1(g)$ we get $u^5$ if $a\ne 0$. Now
using $\xi_1$ and $\xi_2$ we get all monomials divisible by $v$ and $w$ of degree $5$ in $L\mathcal R_1(X)\cdot{}g + \mathcal M_{3}^{6}$. Therefore $g$ is $5$-determined
if $a\ne 0, -\frac{4}{27}$.
A complete $4$-transversal of $g$ is
$g+\lambda u^{4}$. Using Mather's Lemma (the vectors
$\xi_3(g)$, $v\xi_1(g)$, $u\xi_2(g)$ give $uw,$ $v^2,$ $u^2v$ if
$a\ne -\frac{4}{27}$, then  $u^2\xi_1(g)$ gives $u^4$ if $a\ne 0$)
we show that $g+\lambda u^{4}$ is $\mathcal R(X)$-equivalent to $g$. Therefore $g$ is $4$-determined.
An $\mathcal R^+(X)$-versal deformation is given $g=w+uv+a u^3+a_{2}u^{2}
+a_1u$ and has codimension $3$ (the codimension of the stratum is $2$).
\end{proof}

\subsection{The geometry of functions on a cuspidal edge}\label{sec:Geomsubmersions}

We consider the $\mathcal A$-model cuspidal edge $X$ parametrised by $f(x,y)=(x,y^2,y^3)$ and with equation $v^3-w^2=0$.
The tangential line at a singular point is parallel to $(1,0,0)$ and the tangent cone to $X$ at a singular point is the plane $w=0$.

Given a deformation $F:\mathbb R^3\times \mathbb R^2,0\to \mathbb R$ of a germ $g$ on $X$, we consider the family
$G(x,y,a)=F(f(x,y),a)$ and the following sets:
$$
\mathscr D_1(F)=\{(a,G(x,y,a))\in \mathbb R^2\times \mathbb R:
\frac{\partial G}{\partial x}=\frac{\partial G}{\partial y}=0 \mbox{ at } (x,y,a)
\}
$$
and
$$
\mathscr D_2(F)=\{(a,G(x,0,a))\in \mathbb R^2\times \mathbb R: \frac{\partial G}{\partial x}=0 \mbox{ at  } (x,0,a)
\}.
$$

It is not difficult to show that for  two $P$-$\mathcal R^+(X)$-equivalent deformations $F_1$ and $F_2$
the sets $\mathscr D_1(F_1)$ and $\mathscr D_1(F_2)$ are diffeomorphic and so are $\mathscr D_2(F_1)$ and $\mathscr D_2(F_2)$.
Therefore, it is enough to compute the sets $\mathscr D_1(F)$ and $\mathscr D_2(F)$
for the deformations in Table \ref{tab:germsubm}.

\medskip
$\bullet$ {\it The  germ $g=u$.}\\
The fibre $g=0$ is a plane transverse to both the tangential line and to the tangent cone to $X$.
Here an $\mathcal R^+(X)$-versal deformation  is $F(u,v,w,a_1,a_2)=u$ and
both $\mathscr D_1(F)$ and $\mathscr D_2(F)$ are the empty set.

\medskip
$\bullet$ {\it The germs  $g=\pm v\pm u^k$, $k=2,3,4$.}

The fibre $g=0$ is tangent to the tangential line of $X$ at the origin but is transverse
the tangent cone to $X$.
The contact of the tangential line with the fibre $g=0$ is measured by the singularities
of $g(x,0,0)=\pm x^k$, so it is of type $A_{k-1}$.

\smallskip
(i) $k=2$. Here an $\mathcal R^+(X)$-versal deformation  is $F(u,v,w,a_1,a_2)=\pm v\pm u^2$.
Then $G(x,y,a_1,a_2)=\pm y^2\pm x^2$, and both $\mathscr D_1(F)$ and $\mathscr D_2(F)$ are planes (Figure \ref{fig:Discvuk}, left).

\smallskip
(ii) $k=3$. We have $F(u,v,a_1,a_2)=\pm v+u^3+a_1u$ and $G(x,y,a)=\pm y^2+ x^3+a_1x$. Thus
$\frac{\partial G}{\partial x}=\frac{\partial G}{\partial y}=0$ when $y=0$ and $a_1=-3x^2$. The set $\mathscr D_1(F)$
is a surface parametrised by $(x,a_2)\mapsto (-3x^2,a_2,-2x^3)$, i.e.,  is a cuspidal edge.
The set $\mathscr D_2(F)$ is also a cuspidal edge and coincides with $\mathscr D_1(F)$ (Figure \ref{fig:Discvuk}, middle).

\smallskip
(iii)  $k=4$. Here $F(u,v,w,a_1,a_2)=\pm v\pm u^4+a_2u^2+a_1u$ and $G(x,y,a_1,a_2)=\pm y^2\pm  x^4+a_2x^2+a_1x$, so
$\frac{\partial G}{\partial x}=\frac{\partial G}{\partial y}=0$ when $y=0$ and $a_1=\mp 4x^3-2a_2x$.
 The set $\mathscr D_1(F)$
is a surface parametrised by $(x,a_2)\mapsto (\mp 4x^3-2a_2x,a_2,\mp 3x^4-a_2x^2)$, which is
swallowtail surface.
The set $\mathscr D_2(F)$ is also a swallowtail surface and coincides with $\mathscr D_1(F)$ (Figure \ref{fig:Discvuk}, right).

\begin{figure}
\begin{center}
\includegraphics[width=10cm, height=2.5cm]{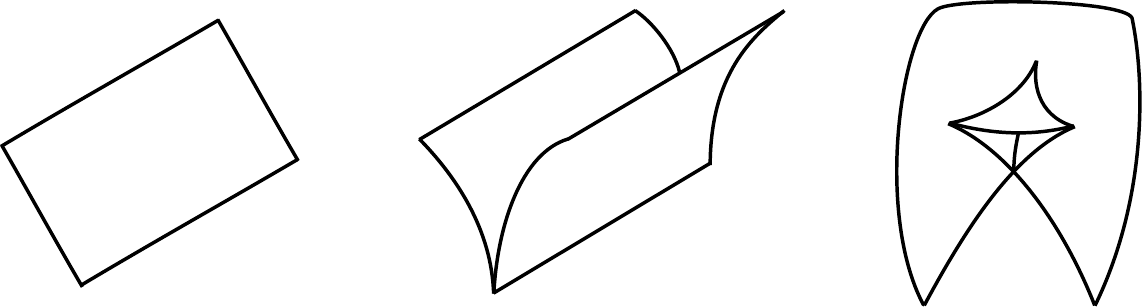}
\caption{Discriminant $\mathscr D_1(F)$ of versal deformations of $\pm v\pm u^k$, from left to right: $k=2,3,4$.
Here $\mathscr D_2(F)$ coincides with $\mathscr D_1(F)$.}
\label{fig:Discvuk}
\end{center}
\end{figure}

\medskip
$\bullet$ {\it The germ $g=w\pm u^2$.}

The tangent plane to the fibre $g=0$ coincides with the tangent cone to $X$ at the origin (and contains the tangential direction to $X$ at that point). The contact of the fibre $g=0$ with the tangential line is an ordinary one (of type $A_1$) as
$g(x,0,0)=\pm x^2$.

We have $F(u,v,a_1,a_2)=w+u^2+a_1v$ and $G(x,y,a_1,a_2)= x^2+y^3+a_1y^2$, so
$\frac{\partial G}{\partial x}=\frac{\partial G}{\partial y}=0$ when $x=0$ and $y(3y+a_1)=0$.
When $y=0$ we get the plane $(a_1,a_2,0)$ and for $3y+a_1=0$ we get a surface
parametrised by $(x,a_2)\mapsto (-3y,a_2,-2y^3)$. The set $\mathscr D_1(F)$ is the union of these two surfaces which have an $A_2$-contact along the $a_2$-axis, see Figure \ref{fig:Discwu2} (left).
The set $\mathscr D_2(F)$ is the plane $(a_1,a_2,0)$.

\begin{figure}
\begin{center}
\includegraphics[width=5cm, height=3.5cm]{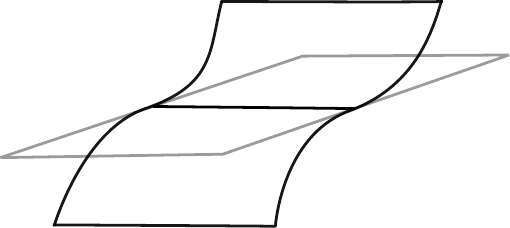}
\includegraphics[width=5cm, height=3.5cm]{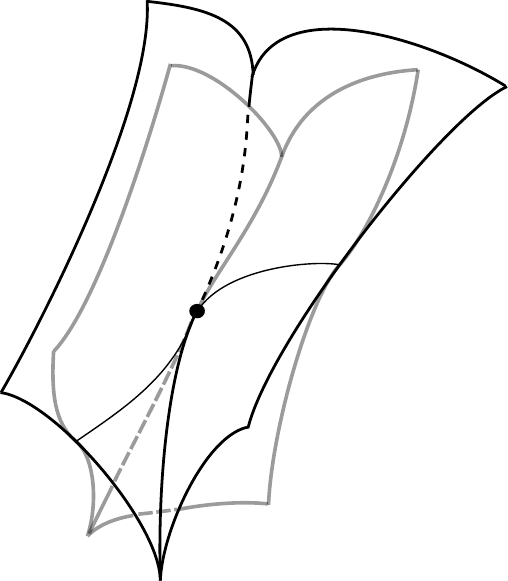}
\caption{Left: discriminant $\mathscr D_1(F)$ of a versal deformation of $w\pm u^2$,
which is the union of two smooth surfaces having an $A_2$-contact along a curve.
Right: Discriminant $\mathscr D_1(F)$ of a versal deformation of $w+uv+au^3$, which is the union of two cuspidal edges. In both figures
the discriminant $\mathscr D_2(F)$, which is a subset of $\mathscr D_1(F)$, is the surface in gray.}
\label{fig:Discwu2}
\end{center}
\end{figure}

\medskip
$\bullet$ {\it  The germ $g=w+uv+au^3$, $a\ne 0$.}

Here too, as in the previous case, the tangent plane to the fibre $g=0$ coincides with the tangent cone to $X$ at the origin.
However, the contact of the fibre $g=0$ with the tangential line is of order $3$
as  $g(x,0,0)=ax^3$.

We have $F(u,v,w,a_1,a_2)=w+uv+au^3+a_2u^2+a_1u$ and  $G(x,y,a_1,a_2)= xy^2+y^3+ax^3+a_2x^2+a_1x$.
Differentiating we get
$$\begin{array}{l}
\frac{\partial G}{\partial x}=y^2+3ax^2+2a_2x+a_1\\
\frac{\partial G}{\partial y}=2xy+3y^2.
\end{array}
$$
We have $\frac{\partial G}{\partial y}=0$ when $y=0$ or $y=-(2/3)x.$ Substituting in
$\frac{\partial G}{\partial x}=0$ gives $\mathscr D_1(F)$
as the union of two surfaces parametrised by
$$(a_2,x)\mapsto (-3ax^2-2a_2x,a_2,-2ax^3-a_2x^2)$$
and
$$(a_2,x)\mapsto (-(\frac{4}{9}+3a)x^2-2a_2x,a_2,-(\frac{8}{27}+2a)x^3-a_2x^2).$$
Both these surfaces are cuspidal edges
and are as in Figure \ref{fig:Discwu2} (right).
The set $\mathscr D_2(F)$ coincides with the first cuspidal edge.


\subsection{Contact of a geometric cuspidal edge with planes}\label{sec:Contactplanes}

The family of height functions $H:M\times S^2\to \mathbb R$ on $M$
is given by $H((x,y),{\bf v})=H_{{\bf v}}(x,y)=\phi(x,y)\cdot {\bf v}.$
The height function $H_{{\bf v}}$ on $M$ along a fixed direction ${{\bf v}}$ measures the contact of $M$ at $p$
with the plane $\pi_{{\bf v}}$ through $p$ and orthogonal to ${{\bf v}}$.
The contact of $M$ with  $\pi_{{\bf v}}$  is described by that of the fibre
$g=0$ with the model cuspidal edge $X$, with $g$ as in Theorem \ref{theo:Classification}. Following the transversality theorem in the Appendix of \cite{brucewest}, for a generic cuspidal-edge, the height functions $H_{{\bf v}}$, for any ${{\bf v}}\in S^2$,
can only have singularities of $\mathcal R(X)$-codimension $\le 2$ (of the stratum) at any point on the cuspidal edge.

We shall take $M$ parametrised as in \eqref{eq:prenormalform}
and write ${\bf v}=(v_1,v_2,v_3)$.
Then,
$$
H_{{\bf v}}(x,y)=H((x,y),{\bf v})=xv_1+(a(x)+\frac{1}{2}y^2)v_2+(b_1(x)+y^2b_2(x)+y^3b_3(x,y))v_3.
$$

The function $H_{{\bf v}}$  is singular at the origin
if and only if $v_1=0$, that is, if and only if the plane $\pi_{{\bf v}}$ contains the tangential direction to $M$ at the origin.

When $\pi_{{\bf v}}$ is transverse to the tangential direction to $M$ at the origin,
the contact of $M$ with $\pi_{{\bf v}}$ at the origin is the same as that of the zero fibre of $g=u$ with
the model cuspidal edge $X$.

Suppose that the plane $\pi_{{\bf v}}$ is a member of the pencil of planes
that contains the tangential direction to $M$ at the origin (in particular, $\pi_{{\bf v}}$ is a tangent plane of the curve $\Sigma$).  If
$\pi_{{\bf v}}$ is not the tangent cone to $M$ at the origin, then the contact
of a generic $M$ with $\pi_{{\bf v}}$ is the same as that of the zero fibre of $g=\pm v\pm u^k$, $k=2,3,4$ with the model cuspidal edge $X$.
The integer $k$ is determined by the contact of $\pi_{{\bf v}}$ with the singular set $\Sigma$ (see \S \ref{sec:Geomsubmersions}).
The restriction of $H_{{\bf v}}$ to $\Sigma$ is given by
$$H_{{\bf v}}(x,0)=
\frac{1}{2}(a_{20}v_2+b_{20}v_3)x^2+\frac{1}{6}(a_{30}v_2+b_{30}v_3)x^3+
\frac{1}{24}(a_{40}v_2+b_{40}v_3)x^4+O(5).
$$
Therefore, the plane  $\pi_{{\bf v}}$ has an $A_k$, $k=1,2,3,$ contact with $\Sigma$ if and only if
$$
\begin{array}{rl}
A_1:&v_2a_{20}+v_3b_{20}\ne 0;\\
A_2:&v_2a_{20}+v_3b_{20}=0, \, a_{20}b_{30}-a_{30}b_{20}\ne 0;\\
A_3:&v_2a_{20}+v_3b_{20}=0, \,a_{20}b_{30}-a_{30}b_{20}= 0, \, a_{40}b_{20}-a_{20}b_{40}\ne 0.
\end{array}
$$
Geometrically, this mean that the plane  $\pi_{{\bf v}}$ has an $A_k$, $k=1,2,3,$ contact with $\Sigma$ if and only if
$$
\begin{array}{rl}
A_1:&\pi_{{\bf v}} \mbox{ \rm is not the osculating plane of }\Sigma;\\
A_2:&\pi_{{\bf v}} \mbox{ \rm is the osculating plane of }\Sigma,\, \tau_{\Sigma}(0)\ne 0;\\
A_3:&\pi_{{\bf v}} \mbox{ \rm is the osculating plane of }\Sigma,\, \tau_{\Sigma}(0)=0,\, \tau_{\Sigma}'(0)\ne 0.
\end{array}
$$

If the plane $\pi_{{\bf v}}$ coincides with the tangent cone to $M$
at the origin (i.e., ${\bf v}=(0,0,1)$)
but is not the osculating plane of $\Sigma$ (i.e., $\kappa_n(0)=b_{20}\ne 0$),
then the contact
of $M$ with $\pi_{{\bf v}}$ is the same as that of the zero fibre of $g=w\pm u^2$ with the model cuspidal edge $X$, that is, the height function has an $A_3$-singularity.

When $\pi_{{\bf v}}$ is the tangent cone to $M$ and coincides with the osculating
plane of $\Sigma$ ($\kappa_n(0)=b_{20}=0$) but
$\tau_{\Sigma}(0)\ne 0$ (i.e., $\kappa_i(0)=b_{30}\ne 0$) its contact with $M$ is described by the germ
$g=w+uv+au^3$ with the model cuspidal edge $X$. Here, the corresponding height function has a $D_4$-singularity.
(Compare with Theorem 2.11 in \cite{martinsnuno}
and Lemma 4.2 in \cite{teramoto}.) We observe that the case when the tangent cone to $M$
and the osculating plane of $\Sigma$ coincides at a point where $\tau_{\Sigma}(0)=0$ is not generic.

We have the sets
$$
\mathscr D_1(H)=\{({{\bf v}},H_{{\bf v}}(x,y))\in  S^2 \times \mathbb R:
\frac{\partial H_{{\bf v}}}{\partial x}=\frac{\partial H_{{\bf v}}}{\partial y}=0 \mbox{ at } (x,y,{{\bf v}})
\}
$$
and
$$
\mathscr D_2(H)=\{({{\bf v}},H_{{\bf v}}(x,0))\in S^2\times \mathbb R: \frac{\partial H_{{\bf v}}}{\partial x}=0 \mbox{ at  } (x,0,{{\bf v}})
\}.
$$

If $\pi_{\bf v}$ is a member of the pencil containing the tangential direction of $M$ but is not the tangent cone to $M$, then
the set $\mathscr D_1(H)$ coincides with $\mathscr D_2(H)$ and describes locally the dual of the curve $\Sigma$.
When $\pi_{\bf v}$ is the tangent cone to $M$, then
the set $\mathscr D_1(H)$ consists of two components. One of them is $\mathscr D_2(H)$ (the dual  of $\Sigma$) and the other is the {\it proper dual} of $M$ which
is the surface consisting of the tangent planes to $M$ away from points on $\Sigma$ together with their limits at points on $\Sigma$, i.e.,
the tangent cones at points on $\Sigma$.

If the contact of $M$ with  $\pi_{{\bf v}}$  is described by that of the fibre
$g=0$ with the model cuspidal edge $X$, with $g$ as in Theorem \ref{theo:Classification}, then $\mathscr D_1(H)$ (resp. $\mathscr D_2(H)$) is
diffeomorphic to $\mathscr D_1(F)$ (resp. $\mathscr D_2(F)$), where $F$ is an $\mathcal R^+(X)$-versal deformation of $g$ with 2-parametres.
In particular, the calculations and figures in \S\ref{sec:Geomsubmersions} give the models, up to diffeomorphisms, of $\mathscr D_1(H)$ and $\mathscr D_2(H)$.
We have thus the following result.

\begin{prop} Let $M$ be a generic cuspidal edge in $\mathbb R^3$. Then any height function on $M$ has locally
 one of the singularities modeled by the submersions in  {\rm Table \ref{tab:germsubm}}.
The proper dual of $M$ together with the dual of its singular curve $\Sigma$ are as in
{\rm Figure \ref{fig:Discwu2} (left)}  when the tangent cone to $M$ is distinct from the osculating plane to $\Sigma$ and as
{\rm Figure \ref{fig:Discwu2} (right)} otherwise.
\end{prop}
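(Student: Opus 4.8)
The plan is to assemble the proposition from the classification in Theorem \ref{theo:Classification} together with the geometric identifications already carried out in this subsection. First I would invoke the transversality statement (from the Appendix of \cite{brucewest}, adapted to $\mathcal R(X)$ as recalled at the start of \S\ref{sec:Contactplanes}): for a residual set of cuspidal edges $M$, the family $H$ of height functions, viewed as a family of submersions whose fibres contact the model $X$, meets the $\mathcal R(X)$-orbits of codimension $>2$ (of the stratum) transversally, hence avoids them; so at every $p\in M$ and every ${\bf v}\in S^2$, $H_{\bf v}$ has an $\mathcal R(X)$-singularity appearing in Table \ref{tab:germsubm}. This immediately gives the first sentence. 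The content that remains is the local model of the dual, which splits according to where $\pi_{\bf v}$ sits relative to the tangential line and tangent cone of $M$.

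Next I would separate cases exactly as in the discussion preceding the statement. If $\pi_{\bf v}$ is transverse to the tangential direction, $H_{\bf v}$ is regular ($g=u$) and contributes nothing to the dual. If $\pi_{\bf v}$ belongs to the pencil through the tangential line but is not the tangent cone, then $v_1=0$ but $v_3\ne 0$ forces $g$ to be one of $\pm v\pm u^k$, $k=2,3,4$, and in each of these cases $\mathscr D_1(F)=\mathscr D_2(F)$ by the computations in \S\ref{sec:Geomsubmersions}; so $\mathscr D_1(H)=\mathscr D_2(H)$ is (a smooth sheet of) the dual of $\Sigma$ and there is no separate proper-dual sheet — this is why the proper dual only becomes visible when $\pi_{\bf v}$ is the tangent cone. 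Finally, when $\pi_{\bf v}$ is the tangent cone ${\bf v}=(0,0,1)$: if $\kappa_n(0)=b_{20}\ne 0$ the germ is $g=w\pm u^2$ and Figure \ref{fig:Discwu2} (left) applies; if $b_{20}=0$ but $\kappa_i(0)=b_{30}\ne 0$ (equivalently $\tau_\Sigma(0)\ne 0$) the germ is $g=w+uv+au^3$ and Figure \ref{fig:Discwu2} (right) applies; the remaining possibility $b_{20}=b_{30}=0$ is non-generic (one codimension condition each), so can be excluded by shrinking the residual set once more.

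To convert these $\mathcal R(X)$-normal forms into statements about the actual dual surface I would use Proposition \ref{theo:InfcondUnfFunc}: I must check that the natural $2$-parameter deformation extracted from $H$ — obtained by restricting $H((x,y),{\bf v})$ to a chart of $S^2$ near the relevant ${\bf v}$, with the height value as the auxiliary coordinate — is $\mathcal R^+(X)$-versal at the two codimension-$2$ (of the stratum) germs $\pm v\pm u^4$ and $w+uv+au^3$, and likewise versal at the lower-codimension germs $\pm v+u^3$, $w\pm u^2$. Granting versality, the $P$-$\mathcal R^+(X)$-invariance of $\mathscr D_1$ and $\mathscr D_2$ (noted in \S\ref{sec:Geomsubmersions}) identifies $\mathscr D_1(H)$ with $\mathscr D_1(F)$ and $\mathscr D_2(H)$ with $\mathscr D_2(F)$ up to diffeomorphism, and the figures in \S\ref{sec:Geomsubmersions} finish the proof; in particular $\mathscr D_1(H)$ in the tangent-cone cases has two components, the dual of $\Sigma$ and the proper dual, with the stated $A_2$-contact or cuspidal-edge-pair structure.

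The main obstacle is the versality check: one has to write out $\dot H_i$ for a concrete chart on $S^2$ at ${\bf v}=(0,0,1)$ in terms of the coefficients $a_{ij},b_{ij}$ of \eqref{eq:prenormalform} and verify that $L\mathcal R_e(X)\cdot g+\mathbb R\{1,\dot H_1,\dot H_2\}=\mathcal E_3$, which forces a couple of open genericity conditions (nonvanishing of certain combinations such as $\kappa_n(0)$ or $\kappa_i(0)$, already encountered above); these conditions are exactly what pins down which generic stratum one is on and they cut out the residual set in the transversality argument. Everything else is bookkeeping over the finite list in Table \ref{tab:germsubm}, already done in \S\ref{sec:Geomsubmersions}.
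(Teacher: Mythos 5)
Your proposal follows the paper's own route: the Bruce--West transversality statement gives the first sentence, the case division by the position of $\pi_{\bf v}$ relative to the tangential line, the tangent cone and the osculating plane of $\Sigma$ identifies the relevant germs of Table \ref{tab:germsubm}, and the identification of $\mathscr D_1(H),\mathscr D_2(H)$ with $\mathscr D_1(F),\mathscr D_2(F)$ via versality reduces the dual pictures to the computations of \S\ref{sec:Geomsubmersions}. Two small remarks: the condition for $\pi_{\bf v}$ to lie in the pencil through the tangential line but differ from the tangent cone is $v_1=0$ and $v_2\ne 0$ (not $v_3\ne 0$); and the explicit versality verification you single out as the main obstacle is not a separate step, since transversality of the jet extension of $H$ to the $\mathcal R(X)$-orbits --- which is exactly what genericity provides --- is equivalent to $\mathcal R^+(X)$-versality of the two-parameter family $H$, so the paper obtains it for free.
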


\section{Orthogonal projections of a cuspidal edge}\label{sec:projections}

The family of orthogonal projections in $\mathbb R^3$ is given by
$$
\begin{array}{cccc}
\Pi:&\mathbb R^3\times S^2&\to&TS^2\\
  &(p,{{\bf v}})          &\mapsto& (v, \Pi_{{\bf v}}(p))
\end{array}
$$
where $\Pi_{{\bf v}}(p)=p-(p \cdot {{\bf v}}){{\bf v}}$.
Given a surface $M$, we denote by $P$ the restriction
of $\Pi$ to $M$. Thus, for $M$ parametrised by $\phi$,
the family of orthogonal projections $ P:U\times S^2\to TS^2$ is given by
$$
P((x,y),{{\bf v}})=(v, P_{{\bf v}}(x,y)),
$$
with $P_{{\bf v}}(x,y)=\Pi_{{\bf v}}(\phi(x,y))=
\phi(x,y)-(\phi(x,y) \cdot {{\bf v}}){{\bf v}}$. The map $P_{{\bf v}}$ is
locally a map-germ from the plane to the plane and measures the contact of $M$ with lines parallel to ${\bf v}$.

We take $M$ parametrised as in \eqref{eq:prenormalform}. Consider,
for example, the projection along the tangential direction ${\bf
v}=(0,1,0)$. We have $P_{\mbox{\bf
v}}(x,y)=(x,b_1(x)+y^2b_2(x)+y^3b_3(x,y))$. If $\kappa_t=b_{12}=0$
and $\kappa_t'=b_{22}\neq 0$ the singularity of $P_{{\bf v}}$
at the origin is $\mathcal A$-equivalent to $(x,y^3-x^4y)$ and has
$\mathcal A_e$-codimension $3$ (see \S \ref{sec:Geometryprojections}
for even higher $\mathcal A_e$-codimension cases.) Thus, it cannot
be $\mathcal A_e$-versally unfolded by the family $P$. For this
reason, the group $\mathcal A$ is not very useful for
describing the singularities of the projections of $M$ and the way
they bifurcate as the direction of projection changes in $S^2$. We
follow here the same approach as that for the contact of the
cuspidal edge with planes. The projections are germs of submersions,
so we fix the $\mathcal A$-model $X$ of the cuspidal edge and
consider the action of the group $_{ X}\mathcal A=\mathcal
R(X)\times \mathcal L$ on $\mathcal M_3.\mathcal E(3,2)$. We classify germs of
submersions $g$ in $\mathcal M_3.\mathcal E(3,2)$ of $_{X}\mathcal A_e$-codimension $d_e(f, {_{X}\mathcal A})=\dim_{\mathbb R}(\mathcal E(3,2)/L_X{\mathcal A}_e{\cdot}f) \le 2$. We need the following results from \cite{bkd} and \cite{bdw} adapted to our group, where
$$
\begin{array}{rcl}
 L_X{\cal A}_1{\cdot}{g}&=& L\mathcal R_1(X)\cdot{} g+g^*({\mathcal M}_3).\{e_1,e_2\},\\
  L_X{\cal K}{\cdot}{g}&=& L\mathcal R(X)\cdot{} g+g^*({\mathcal M}_3).\{e_1,e_2\}.
 \end{array}
 $$

\begin{theo}\label{prop:CompTrans_XA}{\rm (\cite{bkd})}
Let $g:\mathbb R^3,0\to \mathbb R^2,0$ be a smooth germ and
$h_1,\ldots, h_r$ be homogeneous maps of degree $k+1$ with
the property that
$$
\mathcal{M}_3^{k+1}.\mathcal E(3,2) \subset L_X\mathcal A_1{\cdot}g+ sp\{
h_1,\ldots, h_r\} +\mathcal{M}_3^{k+2}.\mathcal E(3,2) .
$$
Then any germ $h$ with $j^kh(0)=j^kf(0)$ is $_X\mathcal A_1$-equivalent to a germ of the form
$g(x)+\sum_{i=1}^lu_ih_i(x)+\phi(x) $, where $\phi(x)\in \mathcal M_3^{k+2}.\mathcal E(3,2)$. The
vector subspace $sp\{ h_1,\ldots, h_r\}$ is called a complete
$(k+1)$-$_X\mathcal A$-transversal of $g$.
\end{theo}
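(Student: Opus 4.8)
The plan is to prove this by the homotopy method, exactly as in the proof of the complete transversal theorem in \cite{bkd}, but keeping the source diffeomorphisms inside $\mathcal R(X)$ throughout. The one structural fact needed is that $_X\mathcal A=\mathcal R(X)\times\mathcal L$ is a geometric subgroup of $\mathcal A$ in the sense of Damon \cite{damon}: $\mathcal R(X)$ is a geometric subgroup of $\mathcal R$ by \cite{brucewest}, and tensoring with the linear $\mathcal L$-action preserves this property. Hence the usual unfolding and triviality formalism is available; in particular we may use the mapping analogue of Proposition \ref{prop:trivialfam} (see \cite{bdw}): a family $F(x,t)$ is $(k+1)$-$_X\mathcal A_1$-trivial provided $\partial F/\partial t\in L_X\mathcal A_1{\cdot}F_t+\mathcal M_3^{k+2}.\mathcal E(3,2)$ for all $t$. (Equivalently, one could apply Mather's lemma on Lie group actions to the induced action of ${}_X\mathcal A_1^{(k+1)}$ on $J^{k+1}(3,2)$.)

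Let $h$ be a germ with $j^kh(0)=j^kg(0)$, so $h-g\in\mathcal M_3^{k+1}.\mathcal E(3,2)$, and write $w$ for its homogeneous part of degree $k+1$. Applying the hypothesis to $w$ produces $\zeta\in L_X\mathcal A_1{\cdot}g$, scalars $c_1,\dots,c_r$ and $\rho\in\mathcal M_3^{k+2}.\mathcal E(3,2)$ with $w=\zeta+\sum_{i=1}^rc_ih_i+\rho$; since $w$ and $\sum c_ih_i$ are homogeneous of degree $k+1$ and $\rho$ has order $\ge k+2$, automatically $\zeta\in\mathcal M_3^{k+1}.\mathcal E(3,2)$. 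Thus $h=g+\sum_{i=1}^rc_ih_i+\zeta+q$ with $q\in\mathcal M_3^{k+2}.\mathcal E(3,2)$.

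Now consider the one-parameter family
$$
H(x,t)=g(x)+\sum_{i=1}^rc_ih_i(x)+t\,\zeta(x),\qquad t\in[0,1],
$$
so that $H_0=g+\sum c_ih_i$ is already of the required form and $h=H_1+q$. I claim $H$ is $(k+1)$-$_X\mathcal A_1$-trivial: $\partial H/\partial t=\zeta\in L_X\mathcal A_1{\cdot}g$, and since $H_t-g$ has order $\ge k+1$ while every vector field in $\Theta_1(X)$ has zero $1$-jet (and the target fields of the unipotent part of $\mathcal L$ vanish to order $2$), replacing $g$ by $H_t$ changes each generating element of $L_X\mathcal A_1{\cdot}g$ only by terms of order $\ge k+2$; hence $L_X\mathcal A_1{\cdot}g\subseteq L_X\mathcal A_1{\cdot}H_t+\mathcal M_3^{k+2}.\mathcal E(3,2)$ and the triviality criterion applies. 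This yields $\Phi\in{_X\mathcal A_1}$ with $\Phi{\cdot}H_1\equiv H_0\pmod{\mathcal M_3^{k+2}.\mathcal E(3,2)}$. Finally, because $h=H_1+q$ and the source and target diffeomorphisms comprising $\Phi$ have identity $1$-jet, composition with $\Phi$ preserves the coset $H_1+\mathcal M_3^{k+2}.\mathcal E(3,2)$, so $\Phi{\cdot}h=H_0+\phi=g+\sum_{i=1}^rc_ih_i+\phi$ with $\phi\in\mathcal M_3^{k+2}.\mathcal E(3,2)$; relabelling the scalars $c_i$ as the parameters $u_i$ gives the statement.

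I expect the only genuine obstacle to be the integration step concealed in the triviality criterion, namely that infinitesimal triviality of $H$ integrates to an honest element of $_X\mathcal A_1$, i.e.\ to a diffeomorphism of $\mathbb R^3$ that really preserves $X=\{v^3-w^2=0\}$. This is handled as in \cite{brucewest}: any $\xi\in\Theta(X)$ satisfies $\xi h=\lambda h$, so its flow preserves $\{h=0\}=X$, and the fields in $\Theta_1(X)$, having vanishing $1$-jet, integrate over $t\in[0,1]$ with prescribed control on the jets of the resulting diffeomorphisms; together with the integration of the target part this produces the required $\Phi$. Everything else is the filtration bookkeeping indicated above.
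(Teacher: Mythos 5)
The paper does not prove this statement: it is quoted from \cite{bkd} as a ready-made adaptation of the complete transversal theorem, so there is no argument of the authors' to compare yours against. Your reconstruction is the standard homotopy-method proof and its structure is sound: extracting the degree-$(k+1)$ part $w$ of $h-g$, decomposing it via the hypothesis, running the linear homotopy $H_t=g+\sum c_ih_i+t\zeta$, and checking the infinitesimal triviality criterion. You also correctly isolate the only point where the $\mathcal R(X)$-structure genuinely matters, namely that the vector fields being integrated lie in $\Theta_1(X)$ and hence their flows preserve $X=\{v^3=w^2\}$ (via $\xi h=\lambda h$, as in \cite{brucewest}), so the resulting diffeomorphisms really belong to $\mathcal R_1(X)$.

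The one step you should make explicit rather than parenthetical is the unipotency of the target factor. Your key inclusion $L_X\mathcal A_1\cdot g\subseteq L_X\mathcal A_1\cdot H_t+\mathcal{M}_3^{k+2}.\mathcal{E}(3,2)$ is automatic for the source part, since elements of $\Theta_1(X)$ have coefficients in $\mathcal M_3^2$ and $H_t-g\in\mathcal{M}_3^{k+1}.\mathcal{E}(3,2)$, and for target changes with identity $1$-jet; but for a \emph{linear} target change $\psi$ the difference $\psi\circ H_t-\psi\circ g=\psi\circ(H_t-g)$ has a genuine homogeneous degree-$(k+1)$ part that is not absorbed by $\mathcal{M}_3^{k+2}.\mathcal{E}(3,2)$. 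Since the paper's displayed formula $L_X\mathcal A_1\cdot g=L\mathcal R_1(X)\cdot g+g^*(\mathcal M)\{e_1,e_2\}$ includes that linear part, your homotopy step does not go through against that literal definition; it does go through for the genuinely unipotent subgroup (source and target $1$-jets the identity), which is the setting in which \cite{bkd} prove the theorem and is evidently what is intended. State that you work with the unipotent group, or add the extra argument that reabsorbs the degree-$(k+1)$ discrepancy coming from the linear target action; as it stands this is the only real gap, and it is one the paper's own notation invites.
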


\begin{theo}\label{theo:Detgroup_XA}
{\rm (\cite{bdw})}
If $g$ satisfies
$$\begin{array}{rcl}
\mathcal{M}_3^{l}.\mathcal{E}(3,2)&\subset&L_X\mathcal{K}\cdot{}g\\
\mathcal{M}_3^{k+1}.\mathcal{E}(3,2)&\subset&L_X\mathcal{A}_1\cdot{}g+\mathcal{M}_3^{l+k+1}.\mathcal{E
}(3,2)
\end{array}
$$
then $g$ is $k$-$_X\mathcal{A}$-determined.
\end{theo}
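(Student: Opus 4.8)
The plan is to prove this by the homotopy (path) method, following the proof of the classical finite-determinacy theorem of Bruce--du Plessis--Wall \cite{bdw} but carried out for the geometric subgroup $_X\mathcal A$ in the sense of Damon \cite{damon}. The only structural ingredient special to our situation is that $\Theta(X)$ is a finitely generated $\mathcal E_3$-module, which is Proposition \ref{prop:genO(X)}; this is exactly what makes $_X\mathcal A=\mathcal R(X)\times\mathcal L$ a geometric subgroup of $\mathcal A$, so that the relative Malgrange preparation theorem and a filtered (Artin--Rees / Nakayama) lemma are available for the modules built from $\Theta(X)$, $g^*(\mathcal M_2)$ and the partial derivatives of $g$. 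One records at the outset that, since $_X\mathcal A_1$ is normal in $_X\mathcal A$ and the submodules $\mathcal M_3^m.\mathcal E(3,2)$ are $_X\mathcal K$-invariant, both hypotheses are invariant under $_X\mathcal A$-equivalence, and that $_X\mathcal K$-codimension is upper semicontinuous in smooth families.

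First I would pass to the infinitesimal problem. Let $g'$ be an arbitrary germ with $j^kg'(0)=j^kg(0)$ and put $F(x,t)=g(x)+t\big(g'(x)-g(x)\big)$, so that $F_0=g$, $F_1=g'$, every $F_t$ has the same $k$-jet as $g$, and the velocity $\dot F:=\partial F/\partial t=g'-g$ lies in $\mathcal M_3^{k+1}.\mathcal E(3,2)$. One must show that $F_t$ is $_X\mathcal A$-equivalent to $g$ for every $t\in[0,1]$; by the Thom--Levine lemma it is enough to show that, as a germ along $\{0\}\times[0,1]$, the velocity $\dot F$ lies in the relative tangent space $L_X\mathcal A_1\cdot F$ (the family analogue, over the germs in $(x,t)$, of the space introduced before Theorem \ref{prop:CompTrans_XA}). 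Integrating the resulting time-dependent vector field over $[0,1]$ then produces the equivalence: the source fields obtained lie in $\Theta_1(X)$, hence are tangent to $X$, so their flows are germs of diffeomorphisms preserving $X$ and integrate to a family $\Phi_t\in\mathcal R(X)$, while the target fields integrate to a family $\Psi_t\in\mathcal L$, with $\Psi_t\circ F_t\circ\Phi_t^{-1}=g$.

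Verifying this relative condition is the main work, and I expect it to be the main obstacle. The scheme is: from the first hypothesis $\mathcal M_3^{l}.\mathcal E(3,2)\subset L_X\mathcal K\cdot g$ and upper semicontinuity one obtains the relative statement $\mathcal M_3^{l}.\mathcal E(3,2)\subset L_X\mathcal K\cdot F$ for the family, and the relative Malgrange preparation theorem applied to this $_X\mathcal K$-finiteness yields that the relevant relative quotient is a finitely generated module over the target ring via $F^*$. Feeding this into the second hypothesis, transported to the family,
$$
\mathcal M_3^{k+1}.\mathcal E(3,2)\ \subset\ L_X\mathcal A_1\cdot F+\mathcal M_3^{l+k+1}.\mathcal E(3,2),
$$
one rewrites the error term using $\mathcal M_3^{l+k+1}=\mathcal M_3^{k+1}\cdot\mathcal M_3^{l}$, the inclusion $\mathcal M_3^{l}.\mathcal E(3,2)\subset\Theta(X)\cdot F+F^*(\mathcal M_2).\mathcal E(3,2)$, and $\mathcal M_3^{k+1}\,\Theta(X)\subset\Theta_1(X)$ (valid since $k\ge1$), to reduce to
$$
\mathcal M_3^{k+1}.\mathcal E(3,2)\ \subset\ L_X\mathcal A_1\cdot F+F^*(\mathcal M_2)\cdot\mathcal M_3^{k+1}.\mathcal E(3,2);
$$
a Nakayama argument on the finitely generated module supplied by the preparation step then forces $\mathcal M_3^{k+1}.\mathcal E(3,2)\subset L_X\mathcal A_1\cdot F$, and in particular $\dot F\in L_X\mathcal A_1\cdot F$, with all data depending smoothly on $t$. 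Given the integration step already described, this shows that $g=F_0$ is $_X\mathcal A$-equivalent to $g'=F_1$; as $g'$ was an arbitrary germ with the same $k$-jet as $g$, it follows that $g$ is $k$-$_X\mathcal A$-determined. The delicate points --- the relative form of the preparation theorem, the uniformity in $t$ of all the module inclusions over the whole interval $[0,1]$, and the fact that the integrations remain inside $\mathcal R(X)\times\mathcal L$ --- are precisely those handled by Damon's axiomatics for geometric subgroups once the finite generation of $\Theta(X)$ from Proposition \ref{prop:genO(X)} is in place.
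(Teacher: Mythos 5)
The paper gives no proof of this statement: it is quoted directly from Bruce--du Plessis--Wall \cite{bdw} (adapted to the geometric subgroup $_X\mathcal A$ via Damon's framework \cite{damon}), and your sketch is a faithful reconstruction of exactly that argument --- linear homotopy plus the Thom--Levine criterion, the reduction $\mathcal M_3^{l+k+1}=\mathcal M_3^{k+1}\cdot\mathcal M_3^{l}$ combined with $\mathcal M_3^{k+1}\Theta(X)\subset\Theta_1(X)$, and the preparation-theorem/Nakayama step made available by the finite generation of $\Theta(X)$ in Proposition \ref{prop:genO(X)}. So the approach is the same as the cited source's, and the steps you leave at the level of ``as in \cite{bdw}/\cite{damon}'' are precisely the ones those references supply.
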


We also use the following lemma.

\begin{lem}{\rm \bf (Mather's Lemma)}
Let $\alpha:G\times M\to M$ be a smooth  action of a Lie group $G$ on a manifold $M$, and let $V$ be a connected
submanifold of $M$. Then $V$ is contained in a single orbit if and only if the following hold:

{\rm (a)} $T_vV\subseteq T_v(G.v), \forall v\in V$,

{\rm (b)} $\dim T_v(G_v)$ is independent of $v\in V$.
\end{lem}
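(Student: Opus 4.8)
The plan is to prove the statement---Mather's Lemma itself---by the standard argument, since the "final statement" of the excerpt is the classical lemma whose proof is a routine but instructive exercise in integrating vector fields along an orbit. I will therefore present the standard proof rather than something idiosyncratic.

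\medskip

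\textbf{Outline of the proof.} First, one direction is trivial: if $V$ lies in a single $G$-orbit, then condition (a) holds because the orbit is a submanifold and $V$ sits inside it, so $T_vV\subseteq T_v(G\cdot v)$; and condition (b) holds because for $v,w$ in the same orbit the stabilisers $G_v$ and $G_w$ are conjugate, hence the orbits $G\cdot v$ and $G\cdot w$ have the same dimension, so $\dim T_v(G\cdot v)$ is constant on the orbit. The substantive direction is the converse. Assume (a) and (b). The first step is to observe that, by (b) and the orbit--stabiliser picture, the map $g\mapsto g\cdot v$ has constant rank as $v$ ranges over $V$; combined with (a), this says that for each $v\in V$ the tangent space $T_v(G\cdot v)$ is a smooth distribution along $V$ that contains $TV$. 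The second step is to fix $v_0\in V$ and show that the $G$-orbit through $v_0$ contains an open neighbourhood of $v_0$ in $V$. To do this, I would pick a complement and, using the constant-rank theorem, choose finitely many elements $g_1,\dots,g_k\in G$ and a chart so that the "infinitesimal action" vectors $\xi_i(v):=\frac{d}{dt}\big|_{0}(\gamma_i(t)\cdot v)$ (for suitable one-parameter families $\gamma_i$ through the identity with $\gamma_i(0)=e$) span $T_v(G\cdot v)$ for all $v$ near $v_0$ in $V$; this is possible precisely because the dimension is constant. The third step is the integration argument: given any $v\in V$ near $v_0$, join $v_0$ to $v$ by a path $c(s)$ inside $V$; since $\dot c(s)\in T_{c(s)}V\subseteq T_{c(s)}(G\cdot c(s))=\mathrm{span}\{\xi_1(c(s)),\dots,\xi_k(c(s))\}$, we can write $\dot c(s)=\sum_i a_i(s)\xi_i(c(s))$ with smooth coefficients $a_i$, and then solve the ODE in $G$ defining a path $\gamma(s)$ with $\gamma(0)=e$ and $\gamma(s)\cdot v_0 = c(s)$; at $s=1$ this exhibits $v=\gamma(1)\cdot v_0\in G\cdot v_0$. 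The fourth and final step is to upgrade "a neighbourhood of each point of $V$ lies in a single orbit" to "all of $V$ lies in a single orbit": the relation "$v\sim w$ iff $v$ and $w$ lie in the same $G$-orbit" partitions $V$ into relatively open subsets by the previous step, and since $V$ is connected there is only one such subset.

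\medskip

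\textbf{Main obstacle.} The delicate point is the third step, the simultaneous integration: one must produce a \emph{smooth} path $\gamma(s)$ in the (possibly infinite-dimensional, in the singularity-theory applications) group $G$ solving $\frac{d}{ds}(\gamma(s)\cdot v_0)=\dot c(s)$, which requires knowing that the infinitesimal action vectors $\xi_i$ can be integrated and that the resulting flow depends smoothly on parameters---i.e.\ a fundamental-theorem-of-ODEs statement in the relevant category. In the finite-dimensional Lie-group setting this is immediate from the existence and smooth dependence of solutions of ODEs; in the applications to the groups $\mathcal R(X)$, $_X\mathcal A$ and $\mathcal K$ used elsewhere in the paper, one appeals instead to the analogous statements for the (pro-)algebraic unipotent groups acting on jet spaces, where everything is polynomial and the integration is elementary. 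For the purposes of this lemma, I will state it in the finite-dimensional form above, which is all that is needed since in every application in \S\ref{sec:submersions} and \S\ref{sec:projections} the lemma is used on a finite jet level $J^k(n,p)$ with $G$ a finite-dimensional Lie group of jets of coordinate changes.

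\medskip

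\textbf{Remark on usage.} In practice the lemma is applied as follows: given candidate normal forms, one takes $V$ to be a connected family (typically an affine subspace, or the complement of a proper algebraic subset in one) of $k$-jets all having the same $k$-jet of $G$-orbit dimension, verifies (a) by checking that the difference of any two members of $V$ lies in the tangent space $L G\cdot g$ computed from the generating vector fields (here $\xi_1,\xi_2,\xi_3$ of Proposition~\ref{prop:genO(X)} and the coordinate vectors $e_i$), and verifies (b) by a direct dimension count; the conclusion is that all jets in $V$ are $G$-equivalent, which is exactly what is invoked repeatedly in the proof of Theorem~\ref{theo:Classification}.
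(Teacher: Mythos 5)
The paper does not prove this lemma at all: it is quoted as a classical tool (Mather's original lemma on connected submanifolds contained in a single orbit), so there is no ``paper's route'' to compare against. Your argument is the standard proof and it is sound. The trivial direction is fine, and the converse is organised correctly: lower semicontinuity of rank together with hypothesis (b) guarantees that finitely many infinitesimal generators $\xi_{X_1},\dots,\xi_{X_k}$ spanning $T_{v_0}(G\cdot v_0)$ continue to span $T_v(G\cdot v)$ for $v$ near $v_0$ in $V$; hypothesis (a) then lets you write $\dot c(s)=\sum_i a_i(s)\xi_{X_i}(c(s))$ along a path in $V$; and solving the logarithmic ODE in $G$ realises $c$ as $s\mapsto\gamma(s)\cdot v_0$, after which connectedness of $V$ finishes the job since the orbit relation partitions $V$ into relatively open pieces. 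Two small points you wave at but should make explicit if this were written out in full: the smooth choice of the coefficients $a_i(s)$ (locally select $d=\dim T_v(G\cdot v)$ of the generators forming a basis and invert by Cramer's rule, then patch over $[0,1]$), and the identification of the flow of the time-dependent vector field $\sum_i a_i(s)\xi_{X_i}$ with the action of a path in $G$ via uniqueness of integral curves. Note also that condition (b) as printed reads $\dim T_v(G_v)$, where $G_v$ would normally denote the stabiliser; you have (correctly) read it as the orbit $G\cdot v$, which is the standard formulation and the one actually used in the classification arguments of \S\ref{sec:submersions} and \S\ref{sec:projections}. Your closing remark on how the lemma is invoked on finite jet spaces matches exactly how it is used in the proofs of Theorems \ref{theo:Classification} and \ref{theo:ClassProj}.
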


We can now state the classification results for submersions in $\mathcal M_3.\mathcal E(3,2)$ of $_{ X}\mathcal A_e$-codimension $\le 2$ (of the stratum).

\begin{theo} \label{theo:ClassProj}
Let $X$ be the germ of the $\mathcal A$-model of the cuspidal edge parametrised by
$f(x,y,z)=(x,y^2,y^3)$. Denote by $(u,v,w)$ the coordinates in the target.
Then any germ of a submersion in $\mathcal{M}_3.\mathcal E(3,2)$ of $_{ X}\mathcal A_e$-codimension $\le 2$
is
$_{ X}\mathcal A$-equivalent to one of the  germs in {\rm Table \ref{tab:germProj}}.

\begin{table}[ht]
\caption{$_X\mathcal A$-finitely determined germs of $_X\mathcal A_e$-codimension $\le 2$.}
\begin{center}
{
\footnotesize
\begin{tabular}{llcl} \hline  Name & Normal form & $d_e(f,_{ X}\mathcal A)$ &$_{ X}\mathcal A_e$-versal deformation
\\
\hline
Type 1 & $(u,v)$ & $0$ &$(u,v)$\\
Type 2 &$(u,w+uv)$ & $0$ &$(u,w+uv)$\\
Type 3 &\textbf{}$(u,w+u^2v)$ & $1$ &$(u,w+u^2v+a_1v)$\\
Type 4 &$(u,w+u^3v)$ & $2$ &$(u,w+u^3v+a_2uv+a_1v)$\\
Type 5 &$(v+u^3,w+u^2)$ & $1$ &$(v+u^3+a_1u,w+u^2)$\\
Type 6 &$(v+u^5,w+u^2)$ & $2$ &$(v+u^5+a_2u^3+a_1u,w+u^2)$\\
Type 7 &$g_7=(v+au^2\pm u^4,w+uv+bu^3+P),$ & $2^{(*)}$ & $g_7+(0,a_1u+a_2u^2)$\\
& $P=cu^4+du^5+eu^6$&&\\
\hline
\end{tabular}
}
\end{center}
$(*)$: the codimension is of the stratum; $a,b,c,d,e$ are
moduli and are in the complement of some algebraic subsets of $\mathbb R^5$.
\label{tab:germProj}
\end{table}
\end{theo}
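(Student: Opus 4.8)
The plan is to run the complete-transversal classification algorithm, exactly as in the proof of Theorem~\ref{theo:Classification}, but now for the group $_X\mathcal A=\mathcal R(X)\times\mathcal L$ acting on $\mathcal M_3.\mathcal E(3,2)$, using Theorem~\ref{prop:CompTrans_XA} to produce complete transversals, Theorem~\ref{theo:Detgroup_XA} for finite determinacy, Mather's Lemma to collapse families, and Proposition~\ref{prop:genO(X)} for the generators $\xi_1,\xi_2,\xi_3$ of $\Theta(X)$. The group acts by coordinate changes in the source preserving $X=\{v^3-w^2=0\}$ together with arbitrary diffeomorphisms of the target $\mathbb R^2$; infinitesimally, $L_X\mathcal A_1\cdot g=L\mathcal R_1(X)\cdot g+g^*(\mathcal M_2).\{e_1,e_2\}$, where $L\mathcal R_1(X)\cdot g$ is spanned over the appropriate ideals by $\xi_1(g),\xi_2(g),\xi_3(g)$ (applied componentwise). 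Since we only want $_X\mathcal A_e$-codimension $\le 2$ (of the stratum), the induction on jet level terminates quickly in each branch.

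The first step is the $1$-jet classification. Writing $j^1g=(L_1,L_2)$ with $L_i$ linear forms in $u,v,w$, the linear part of the $\mathcal L$-action lets us reduce $(L_1,L_2)$ up to $GL(2,\mathbb R)$ on the left, while the linear changes in $\mathcal R(X)$ are exactly $\eta_1,\eta_2,\eta_3$ from the proof of Theorem~\ref{theo:Classification}. A submersion has $j^1g$ of rank $2$; the possible normal forms of the $2$-plane $\langle L_1,L_2\rangle\subset\langle u,v,w\rangle^*$ modulo this action are: (a) $\langle u,v\rangle$, giving $(u,v)$ --- a submersion transverse to $X$, which is stable, Type~1; (b) $\langle u,w\rangle$ --- then $j^1g\sim(u,w)$ and one must go to higher jets, producing the sequence $(u,w+u^kv)$, which leads to Types~2,3,4; (c) $\langle v,w\rangle$ --- then $j^1g\sim(v,w)$, and here one proceeds to higher jets to get Types~5,6,7. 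One checks these exhaust the rank-$2$ $1$-jets because any other $2$-plane either contains a generic covector (reducible to $u$) or not.

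Next, in branch (b) one takes $j^1g=(u,w)$ and computes the complete $2$-transversal: modulo $L_X\mathcal A_1\cdot(u,w)$ the only surviving quadratic direction in the second component is $uv$ (the monomials $u^2,v^2,vw,\dots$ being killed by $\xi$'s or by adding multiples of the first component via $\mathcal L$), giving $(u,w+uv)=$ Type~2, which is stable. When the coefficient of $uv$ vanishes one gets a complete $3$-transversal $(u,w+u^kv)$; Mather's Lemma kills the extra cubic moduli, giving Type~3 ($k=2$, codim~$1$) and, on the next stratum, Type~4 ($k=3$, codim~$2$), with finite determinacy from Theorem~\ref{theo:Detgroup_XA} after checking $\mathcal M_3^l.\mathcal E(3,2)\subset L_X\mathcal K\cdot g$. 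In branch (c) with $j^1g=(v,w)$ one likewise computes successive transversals: this is where the classification splits according to the $\mathcal G$-class of the quadratic pair recorded in \S\ref{ssec:geomcuspidalede}, yielding the two sub-branches $(v+u^3,w+u^2)$ and $(v+au^2+\dots,w+uv+\dots)$, hence Types~5,6 (a suspension of the $A_k$-type classification, $k=3,5$) and Type~7 (the modal family). Finally, for each normal form one exhibits the claimed versal deformation by checking the infinitesimal versality criterion $L_X\mathcal A_e\cdot g+\mathbb R.\{\dot F_1,\dots,\dot F_l\}+(\text{constants in }\mathcal L)=\mathcal E(3,2)$, which is a routine linear-algebra computation once $L_X\mathcal A_e\cdot g$ is known.

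The main obstacle is Type~7: the germ $g_7=(v+au^2\pm u^4,\,w+uv+bu^3+cu^4+du^5+eu^6)$ carries five moduli, so one must carefully identify, at each jet level from $4$ through $6$, exactly which monomials in the complete transversal are genuine moduli and which can be normalised or removed by the $_X\mathcal A_1$-action, and then verify that for $(a,b,c,d,e)$ outside an explicit proper algebraic subset of $\mathbb R^5$ the germ is $6$-$_X\mathcal A$-determined with $_X\mathcal A_e$-codimension of the stratum equal to $2$. This requires tracking the degenerations (the ``bad'' loci where $\xi_1(g),\xi_2(g),\xi_3(g)$ fail to generate enough, analogous to the conditions $a\ne0,-\tfrac{4}{27}$ in Theorem~\ref{theo:Classification}), and applying Mather's Lemma on connected strata of the parameter space to see that the moduli $(a,b,c,d,e)$ genuinely parametrise distinct orbits while the codimension stays constant. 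Once that bookkeeping is done, finite determinacy follows from Theorem~\ref{theo:Detgroup_XA} and the list is complete.
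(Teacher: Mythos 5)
Your proposal follows essentially the same route as the paper: the same three $1$-jet orbits $(u,v)$, $(u,w)$, $(v,w)$, the same inductive complete-transversal computation using $\xi_1,\xi_2,\xi_3$ together with the left group, Mather's Lemma to remove non-modal transversal directions, Theorem~\ref{theo:Detgroup_XA} for determinacy, and the infinitesimal criterion for the versal unfoldings. Two small caveats: the split in the $(v,w)$ branch is governed by the coefficients of $u^2$ and $uv$ in the $2$-jet of the submersion itself (not by the $\mathcal G$-class of the surface's quadratic pair from \S\ref{ssec:geomcuspidalede}), and the Type~7 ``bookkeeping'' you defer is precisely the part the paper found infeasible by hand and delegated to Kirk's \emph{Transversal} package, so your plan is sound but that step is substantially heavier than the rest.
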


\begin{proof}
We follow the complete transversal technique and classify germs of
submersions inductively of the jet level. Consider the 1-jet
$g=(a_1u+b_1v+c_1w,a_2u+b_2v+c_2w)$. If $a_1\ne 0$ or $a_2\ne 0$
then $g$ is equivalent to $(u,av+bw)$, with $a\ne 0$ or $b\ne 0$
($g$ is a germ of a submersion). For $g=(u,av+bw)$,
$\xi_1(g)=(1,0)$, $\xi_2(g)=(0,2av+3bw)$, $j^1\xi_3(g)=(0,2aw)$, so
if $a\ne 0$, by applying Mather's Lemma, we get
 $g$ equivalent to $(u,v)$, otherwise it is equivalent to $(u,w)$.
 If $a_1=a_2=0$, then changes of coordinates in the target give $g$ equivalent to $(v,w)$. Thus, we have three orbits of submersions
 in the 1-jet space represented by
 $(u,v),\, (u,w),\, (v,w).$ The germ  $(u,v)$ is 1-determined and is stable.

$\bullet$ For $g=(u,w)$, we have $\xi_1(g)=(1,0)$, $\xi_2(g)=(0,w)$, $\xi_3(g)=(0,3v^2)$. Using these vectors and the left group, we can show that
 a  complete $(k+1)$-transversal is $(u,w+\lambda u^kv)$, $k\ge 1$.
 We have two orbits in the $(k+1)$-jet space, namely $(u,w+u^kv)$ and $(u,w)$. For $g=(u,w+u^kv)$,
 we have
 $
 \xi_1(g)=(1,ku^{k-1}v),\, \xi_2(g)=(0,3w+2u^kv),\, \xi_3(g)=(0,3v^2+2u^kw).
 $
 Using the above vectors and the left group, we can show that
$ \mathcal{M}_3^{2}.\mathcal{E}(3,2)\subset L_X\mathcal{K}\cdot{}g$. To prove that the germ is $k+1$-determined we need to prove that
\begin{equation}\label{eq:determinacy(u,w)}
\mathcal{M}_3^{k+2}.\mathcal{E}(3,2)\subset L_X\mathcal{A}_1\cdot{}g+\mathcal{M}_3^{k+4}.\mathcal{E}(3,2).
\end{equation}

 For this, we first show that all monomials of degree $k+3$ are in the right hand side of \eqref{eq:determinacy(u,w)}.
 Using $\xi_i$, $i=1,2,3$, we show that all the monomials of degree $k+3$ of the form $(P(u,v,w),0)$, $(0,wP(u,v,w))$ and $(0,v^2P(u,v,w))$ are in there. We use the left group to show that $(0,u^{k+3})$ is also in there.
If we write $g=(g_1,g_2)$, then $(0,g_1^ig_2)=(0,u^iw+u^{k+i}v)$ and $u^i\xi_2(g)=(0,3u^iw+2u^{k+i}v)$.
For $i\ge 1$ these vectors are in $L_X\mathcal{A}_1\cdot{}g$, so $(0,u^{k+2}v)$ is in the right hand side of \eqref{eq:determinacy(u,w)}. We proceed
similarly for the monomials of degree $k+2$ working now with vectors in $L_X\mathcal{A}_1\cdot{}g$ modulo elements in $\mathcal{M}_3^{k+3}.\mathcal{E}(3,2)$ to get all monomials of degree $k+2$ in the right hand side of \eqref{eq:determinacy(u,w)}. Therefore, \eqref{eq:determinacy(u,w)} holds.
The germ $g$ has $_X\mathcal A_e$-codimension $k-1$ and an $_X\mathcal A_e$-versal unfolding is
 $(u,w+u^kv+a_{k-1}u^{k}v+\ldots a_{1}uv)$.

 \medskip
 $\bullet$ For $g=(v,w)$, we have $\xi_1(g)=(0,0)$, $\xi_2(g)=(2v,3w)$, $\xi_3(g)=(2w,3v^2)$ and a complete 2-transversal is given by
 $g=(v+a_1uv+a_2u^2,w+b_1uv+b_2u^2)$. Then $\xi_1(g)=(a_1v+2a_2u,b_1v+2b_2u)$,
 $\xi_2(g)=(2v+2a_1uv,3w+2b_1uv)$, $\xi_3(g)=(2w+2a_1uw,3v^2+2b_1uw)$.
 Now $j^2w\xi_1(g)=(a_1vw+2a_2uw,b_1vw+2b_2uw)$ and $j^2u\xi_3(g)=(2uw,0)$ and we have $(vw,0)$ and $(0,vw)$ from the left group in the 2-jet of the $_X\mathcal A$ tangent space to the orbit of $g$, so if $b_2\ne 0$, we obtain $(0,uw)$.
 From this and $j^2u\xi_2(g)=(2uv,3uw)$, we also get $(uv,0)$. Then $j^2v\xi_1(g)=(a_1v^2+2a_2uv,b_1v^2+2b_2uv)$ together with $(v^2,0)$ and $(0,v^2)$ from the left group also gives $(0,uv)$ if $b_2\ne 0$. Therefore, by Mather's Lemma
 $g$ is equivalent to $(v+au^2,w+bu^2)$, with $b\ne 0$.
 Using the vectors $(w+bu^2,0)$, $\xi_3(g)=(2w,3v^2)$ and $(0,v^2)$ from the left group, shows that $g$ is equivalent to $(v,w+bu^2)$ if $b\ne 0$. We can then set $b=1$
  by changes of scales.
If $b_2=0$ and $b_1\ne 0$, then $g$ is equivalent to  $(v+au^2,w+uv)$, with $a$ a parameter modulus.
If $b_1=b_2=0$, the orbits are $(v\pm u^2,w)$, $(v+uv,w)$ and $(v,w)$ and all yield germs
of submersions of codimension (of the stratum) greater than 2. Thus, the 2 jets to consider are
$
(v,w+u^2)\,\mbox { \rm and }\, (v+au^2,w+uv).
$

Consider the germ $g=(v,w+u^2)$. Then  $\xi_1(g)=(0,2u)$, $\xi_2(g)=(2v,3w)$ and $\xi_3(g)=(2w,3v^2)$. Using these vectors and those from
the left group we can show that a complete $3$-transversal is $g=(v+\lambda u^3,w+u^2)$ and the orbits in the $3$-jet space are
$(v+u^3,w+u^2)$ and $(v,w+u^2)$.
The germ $(v+u^3,w+u^2)$ is $3$-determined and has codimension $1$. An $_X\mathcal A_e$-versal unfolding is given by
 $(v+u^3+a_1u,w+u^2)$.

The complete 4-transversal for  $(v,w+u^2)$ is empty and the orbits in the 5-jet are $(v+u^5,w+u^2)$ and $(v,w+u^2)$. The germ
$(v+u^5,w+u^2)$ is 5-determined and has codimension 2. An $_X\mathcal A_e$-versal unfolding is given by
 $(v+u^5+a_2u^3+a_1u,w+u^2)$.

For the 2-jet $(v+au^2,w+uv)$ a complete 3-transversal is given by
$g=(v+au^2+\lambda_1u^3,w+uv+\lambda_2u^3)$. Using Mather's Lemma,
it can be shown that g is equivalent to $g=(v+au^2,w+uv+bu^3)$,
where $b$ is also a modulus. A complete 4-transversal is
$g=(v+au^2\pm u^4,w+uv+bu^3+cu^4)$.
The computations here get too complicated to do by hand so we make use of the
computer package ``Transversal"  (\cite{transkirk}). It gives that
the complete 5-transversal is not empty and the orbits in the 6-jet space can be parametrised by
$g_7=(v+au^2\pm u^4,w+uv+bu^3+cu^4+du^5+eu^6)$. The same computer package shows
that the germ $g_7$ is 6-determined provided the moduli are not in the zero set of some polynomial.
The codimension of the stratum of $g_7$ is $2$ and an $_X\mathcal A_e$-versal unfolding is given by
 $g_7+(0,a_1u+a_2u^2)$.
\end{proof}


\subsection{Apparent contour of a cuspidal edge}\label{sec:Geometryprojections}

The singular set of an orthogonal projection of  a smooth surface in $\mathbb R^3$ along a direction $\bf v$
is the set of point where $\bf v$ is tangent to the surface and is called the {\it contour generator}.
The image of the contour generator by the projection
in the direction $\bf v$ is called the {\it apparent contour} ({\it profile}) of the surface along the directions $\bf v$.
 (See for example
\cite{gaffney, rieger} and also \cite{ShCarCidFaBook,koenderink} for more on apparent contours of smooth surfaces, \cite{brucegiblinProjBound} for surfaces with boundary,  \cite{taricreases} for those with creases and corners and
\cite{martinyutaro, west} for those of a surface with a cross-cap singularity.)

For a geometric cuspidal edge $M$, the projection is always singular along the singular curve $\Sigma$, so $\Sigma$ is always part of the contour generator and its
image is part of the apparent contour of $M$ along ${\bf v}$.  We call the {\it proper apparent contour $($profile$)$} of $M$
the projection of the set of points where ${\bf v}$ is tangent to $M$ at its regular points. We seek to describe the
apparent contour of $M$ and how it changes as the direction of projection changes locally in $S^2$.

\begin{theo} The bifurcations on the proper apparent contour of $M$ together with those of the
projection of the singular set of $M$ are, up to diffeomorphisms,  those in the following figures:

{\rm
\begin{tabular}{ll} \\
Type 2: &Figure \ref{fig:ProjType2}\\
Type 3: &Figure \ref{fig:ProjType3}\\
Type 4: &Figure \ref{fig:ProjType4}\\
Type 5: &Figure \ref{fig:ProjType5}\\
Type 6: &Figure \ref{fig:ProjType6}\\
Type 7: &Figures \ref{fig:ProjType7Lips(a)}, \ref{fig:ProjType7Lips(b)}, \ref{fig:ProjType7Lips(c)},  \ref{fig:ProjType7BeaksR7}, \ref{fig:ProjType7BeaksR6(2)} for some cases.\\
\end{tabular}
}
\label{theo:BiffAppCont}

For {\rm Type 1} singularities, the proper apparent contour is empty and the projection of the singular set is a regular curve.
\end{theo}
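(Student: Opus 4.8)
The plan is to derive everything from the $_X\mathcal{A}_e$-versal deformations listed in Theorem \ref{theo:ClassProj} by transporting the versality to the actual family of orthogonal projections and then computing, for each normal form, the discriminant-type sets analogous to $\mathscr D_1, \mathscr D_2$ of \S\ref{sec:Geomsubmersions}. First I would set up, exactly as in \S\ref{sec:projections}, the 2-parameter family $P_{\bf v}$ of projections of a generic cuspidal edge $M$ parametrised by \eqref{eq:prenormalform}, and invoke the transversality theorem from the Appendix of \cite{brucewest} (adapted as in the introduction) to conclude that for generic $M$ only the singularities of $_X\mathcal{A}_e$-codimension $\le 2$ in Table \ref{tab:germProj} occur, and that the natural 2-parameter family obtained from $P$ by fixing $X$ is an $_X\mathcal{A}_e$-versal unfolding of the germ at each point. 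Since $P$-$_X\mathcal{A}^+$-equivalent families have diffeomorphic apparent-contour bifurcation diagrams (the source diffeomorphisms preserve $X$, hence preserve the singular set $\Sigma$ and the regular part separately), it suffices to work with the explicit versal deformations in the last column of Table \ref{tab:germProj}.

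Next, for each Type I would write $G(x,y,a) = g_{\text{versal}}(f(x,y),a)$ with $f(x,y)=(x,y^2,y^3)$ and split the contour generator into the part coming from $\Sigma$ (the line $y=0$, always singular) and the proper part (critical points with $y\ne 0$). Concretely: the contour generator is $\{\partial G/\partial x = \partial G/\partial y = 0\}$ for the first component of $G$, or more precisely one sets the Jacobian of the two target components to drop rank; on $X$ this factors through the parametrisation. For Type 2, $(u,w+uv)$, the proper contour generator is empty and the projection of $\Sigma$ is a cusp/fold picture (this is the stable cuspidal-edge projection, cf. \cite{bruce-wilkinson}); for Type 3, $(u,w+u^2v+a_1v)$, one gets a one-parameter bifurcation; for Types 4, 5, 6 two-parameter bifurcations of the proper profile together with the image of $\Sigma$; and for Type 7 the computations split into the various $\pm$ and modulus subcases (lips, beaks, etc.), which is why only some cases are displayed. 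In each case I would parametrise the apparent contour (image of the contour generator under the projection $P_{\bf v}$) as a plane curve depending on $a=(a_1,a_2)$, identify its singularities and how they deform, and match the result to the figures; the bifurcations of the proper profile turn out to be among the classical ones (lips, beaks, swallowtail, etc.) and the interaction with the straight image of $\Sigma$ is read off directly from the normal forms.

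For Type 1, the statement is immediate: the germ is $(u,v)$, a submersion, so on $X$ the map $P_{\bf v}$ is an immersion away from $\Sigma$, the proper apparent contour is empty, and the restriction to $\Sigma$ (the line $y=0$) is $x\mapsto(x,\ast)$, a regular curve.

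The main obstacle will be Type 7: it carries five moduli and several sign branches, so there is no single normal form and the computation of the contour generator and its image becomes genuinely heavy. Here I would rely on the computer package ``Transversal'' (\cite{transkirk}) as in the proof of Theorem \ref{theo:ClassProj} to organise the strata, and present only the representative subcases (the lips-type cases $g_7$ with $+u^4$ and various moduli ranges, and the beaks-type cases) that give the qualitatively distinct pictures in Figures \ref{fig:ProjType7Lips(a)}--\ref{fig:ProjType7BeaksR6(2)}; a complete enumeration of every stratum in the five-dimensional modulus space is not attempted, which is exactly what the phrase ``for some cases'' in the statement records. The remaining Types 2--6 are routine once the versal families are in hand, since each discriminant computation is a small polynomial elimination entirely parallel to those carried out in \S\ref{sec:Geomsubmersions}.
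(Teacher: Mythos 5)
Your overall strategy is the one the paper uses: compose the $_X\mathcal A_e$-versal deformations of Table \ref{tab:germProj} with the parametrisation $f(x,y)=(x,y^2,y^3)$, split the critical set into $\{y=0\}$ and the proper contour generator, and read off the discriminants. However, there are two concrete problems. First, your description of Type 2 is wrong: for $g=(u,w+uv)$ one gets $h(x,y)=(x,y^3+xy^2)$, whose critical set is $y(3y+2x)=0$ --- a beaks-type singularity with \emph{two} transverse branches. The proper contour generator is the line $3y+2x=0$, not empty, and the apparent contour is the union of $(x,0)$ and $(x,(4/27)x^3)$, two curves with $3$-point contact; this is exactly the content of Figure \ref{fig:ProjType2}. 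Calling Type 2 a ``cusp/fold picture'' with empty proper profile misidentifies the stable local model of the projection of a cuspidal edge.

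Second, and more seriously as a matter of proof architecture: for Type 7 (and, to a lesser extent, Types 5 and 6) the conclusion ``up to diffeomorphism'' requires stratifying the $(a_1,a_2)$-parameter plane not only by the loci where \emph{local} singularities degenerate (lips/beaks births, swallowtails, Type 3, Type 5), but also by the \emph{multi-local} strata --- tacnodes of the proper profile with the image of $\Sigma$, triple points, ``double point $+$ fold'', ``Type 2 $+$ fold'', ``cusp $+$ fold'' --- since it is the relative configuration of all these curves that determines the figures. Your plan of parametrising the apparent contour and identifying ``its singularities and how they deform'' captures only the local part; without the multi-local elimination computations (carried out in the paper via resultants, yielding the leading quadratic terms $a_1=C(a,b)\,a_2^2+O_3$ of each stratum and the dependence of the number of branches on the roots of cubics in $a$ and $b$), one cannot justify the stratification of the $(a,b)$-plane in Figure \ref{fig:Type7Strat(ab)} nor the distinct pictures in Figures \ref{fig:ProjType7Lips(a)}--\ref{fig:ProjType7BeaksR6(2)}. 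Note also that the package ``Transversal'' is used in the paper only for the classification (Theorem \ref{theo:ClassProj}); the bifurcation analysis of Type 7 is done by hand, and delegating it to that package would not produce the multi-local strata in any case. Even for Type 6 one must verify explicitly that no multi-local phenomena occur in the deformation of the ramphoid cusp; this check is part of the proof, not a formality.
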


\begin{proof}
The apparent contour is the discriminant of the projection (that is, the image of its singular set). For a generic surface,
the family of projections is an $_{ X}\mathcal A_e$-versal family of the singularities of its members.
Therefore, the diffeomorphism type of the bifurcations of the apparent contour can be obtained by considering the bifurcations
 of the discriminants in the $_{ X}\mathcal A_e$-versal families restricted to $X$ in Theorem \ref{theo:Detgroup_XA}. We treat each case in Table \ref{tab:germProj} separately.

$\bullet$ Type 1: The germ $g=(u,v)$. We denote by $h$ the composite of $g$ with the parametrisation $f(x,y)=(x,y^2,y^3)$ of $X$. Then
$h(x,y)=(x,y^2)$ which has a singularity of $\mathcal A$-type fold.
The critical set of $h$ is the $x$-axis, i.e., is the singular set of $X$, and the discriminant is a regular curve.

\smallskip
$\bullet$ The germ $g=(u,w+u^kv)$, $k=1,2,3$. Here we have $h(x,y)=(x,y^3+x^ky^2)$ and its singular set is given by
$y(3y+2x^k)=0$. It has two components, one of which ($y=0$) is the singular set of $X$.
The other component (the proper contour generator) is a smooth curve and has $k$-point contact
with the singular set of $X$.

{Type 2: $k=1$.} The germ $h(x,y)=(x,y^3+xy^2)$ has a singularity of $\mathcal A$-type beaks
(which is of $\mathcal A_e$-codimension 1 but $g$ is $_X\mathcal A_e$-stable).
The discriminant is the union of the two curves $(x,0)$ and $(x,(4/27)x^3)$ which have 3-point contact at the origin \mbox{(Figure \ref{fig:ProjType2})}.

\begin{figure}[h]
\begin{center}
\includegraphics[width=3cm, height=3cm]{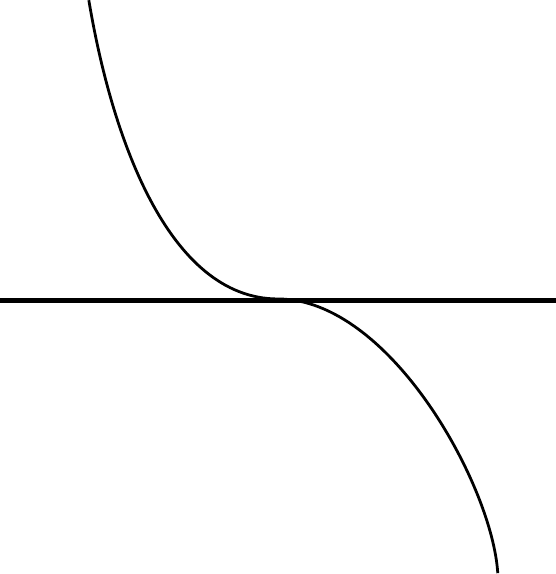}
\caption{Proper apparent contour (thin) and the projection of the singular set (thick) of $M$ at an $_X\mathcal A$-singularity of Type 2.}
\label{fig:ProjType2}
\end{center}
\end{figure}

{Type 3: $k=2$.} We
consider the versal deformation $g=(u,w+u^2v+a_1uv)$ with parameter $a_1$, so $h(x,y)=(x,y^3+x^2y^2+a_1y^2)$. (When $a_1=0$,
$h$ is $\mathcal A$-equivalent to $(x,y^3-x^4y)$ which is a singularity of
 $\mathcal A$-type $4_4$, see \cite{rieger}.)
For $a_1$ fixed, the critical set is given by $y(3y +2x^2+2a_1)=0$ and consists of two curves $y=0$ and $y=-2/3(x^2+a_1)$.
The discriminant if the union of the two curves $(x,0)$ and $(x,4/27(x^2+a_1)^3)$.
See Figure \ref{fig:ProjType3} for the bifurcations in these curves as $a_1$ varies near zero.

\begin{figure}[h]
\begin{center}
\includegraphics[width=9cm, height=3cm]{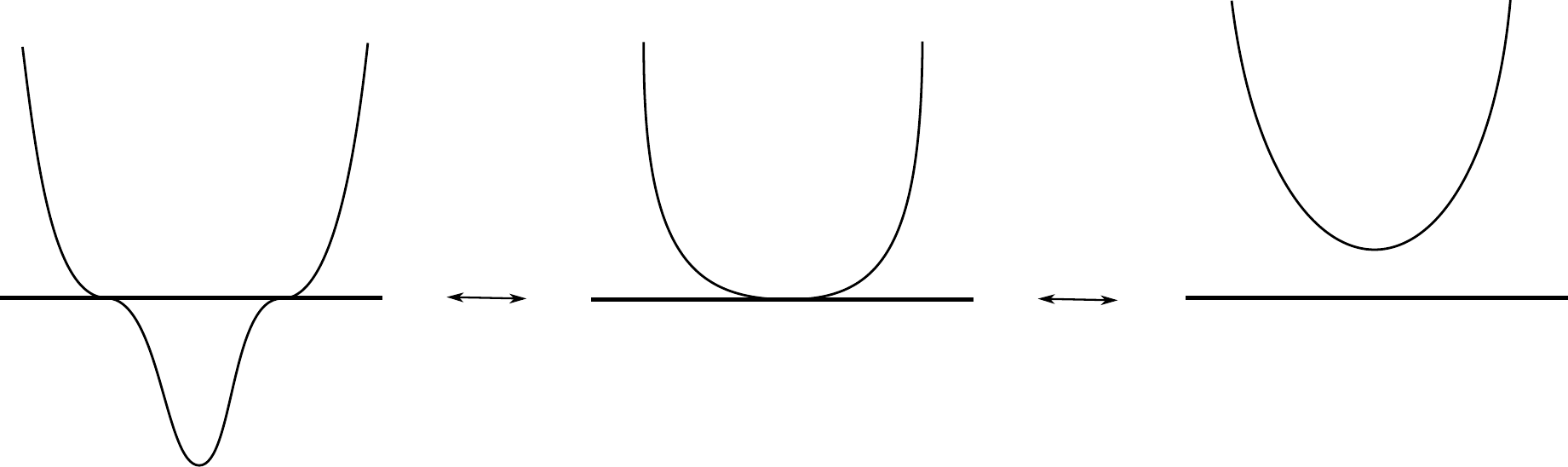}
\caption{Bifurcations in generic 1-parameter families of the proper apparent contour (thin) and of the projection of the singular set (thick) of $M$ at an $_X\mathcal A$-singularity of Type 3.}
\label{fig:ProjType3}
\end{center}
\end{figure}

{Type 4: $k=3$:}
We consider the versal deformation $g=(u,w+u^3v+a_2uv+a_1v)$ with parameters $a_1, a_2$,
so $h(x,y)=(x,y^3+x^3y^2+a_2xy^2+a_1y^2)$. (At $a_1=a_2=0$, $f$ is $\mathcal A$-equivalent to $(x,y^3-x^6y)$ which is a singularity of
 $\mathcal A$-type $4_6$.)
The critical set is given by $y(3y +2(x^3+a_2x+a_1))=0$. The family $x^3+a_2x+a_1$ is a $\mathcal K$-versal deformation of the $A_2$-singularity, so
the bifurcations in the critical set are as in \mbox{Figure \ref{fig:ProjType4} (left)} and those in the discriminant are as in Figure \ref{fig:ProjType4} (right). Singularities of Type 3 occur when $(a_1,a_2)$ are on the cusp curve $27a_1^2+4a_2^3=0$
(middle figures in Figure \ref{fig:ProjType4}).

\begin{figure}[h]
\begin{center}
\includegraphics[width=6cm, height=6cm]{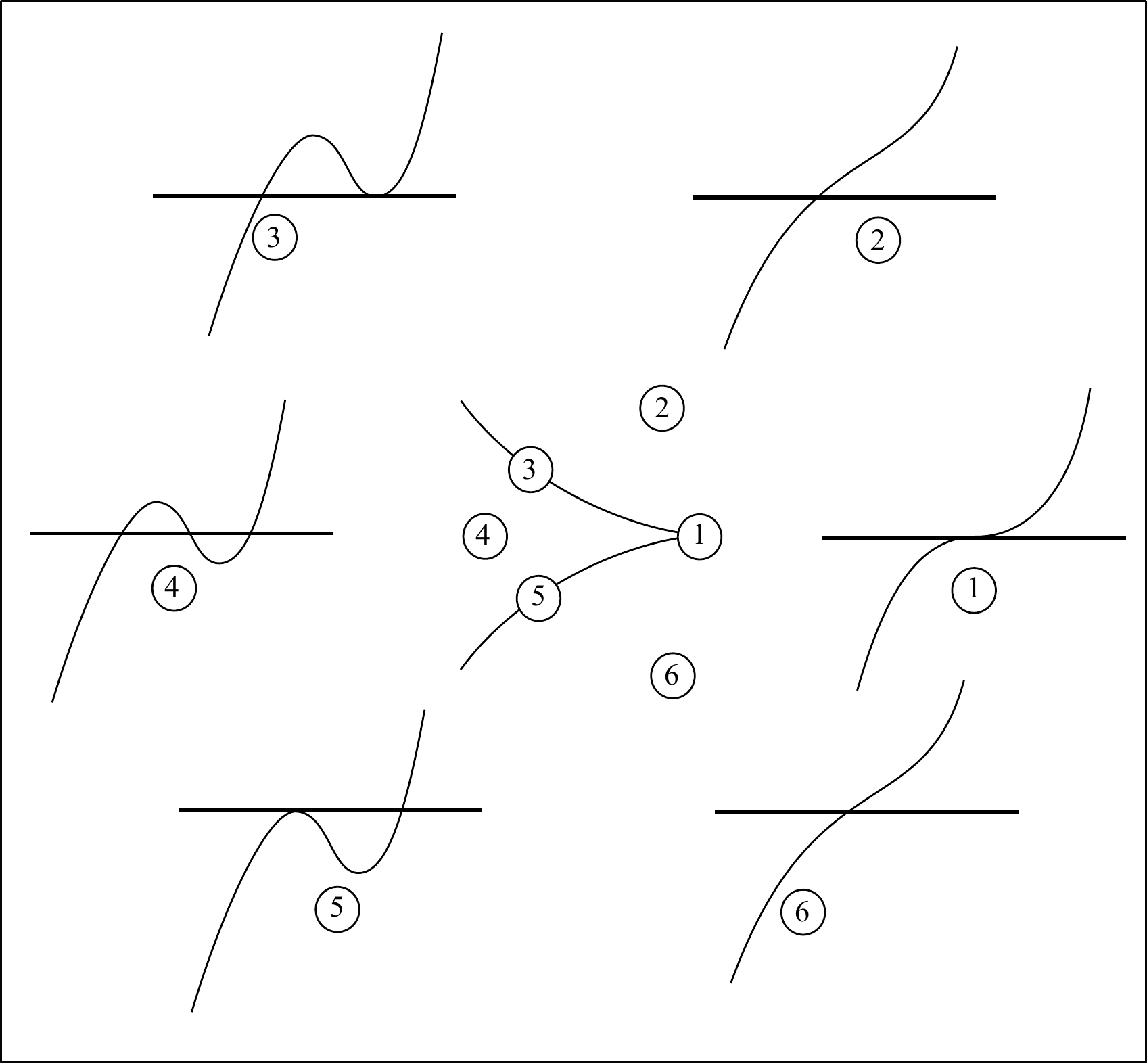}\qquad
\includegraphics[width=6cm, height=6cm]{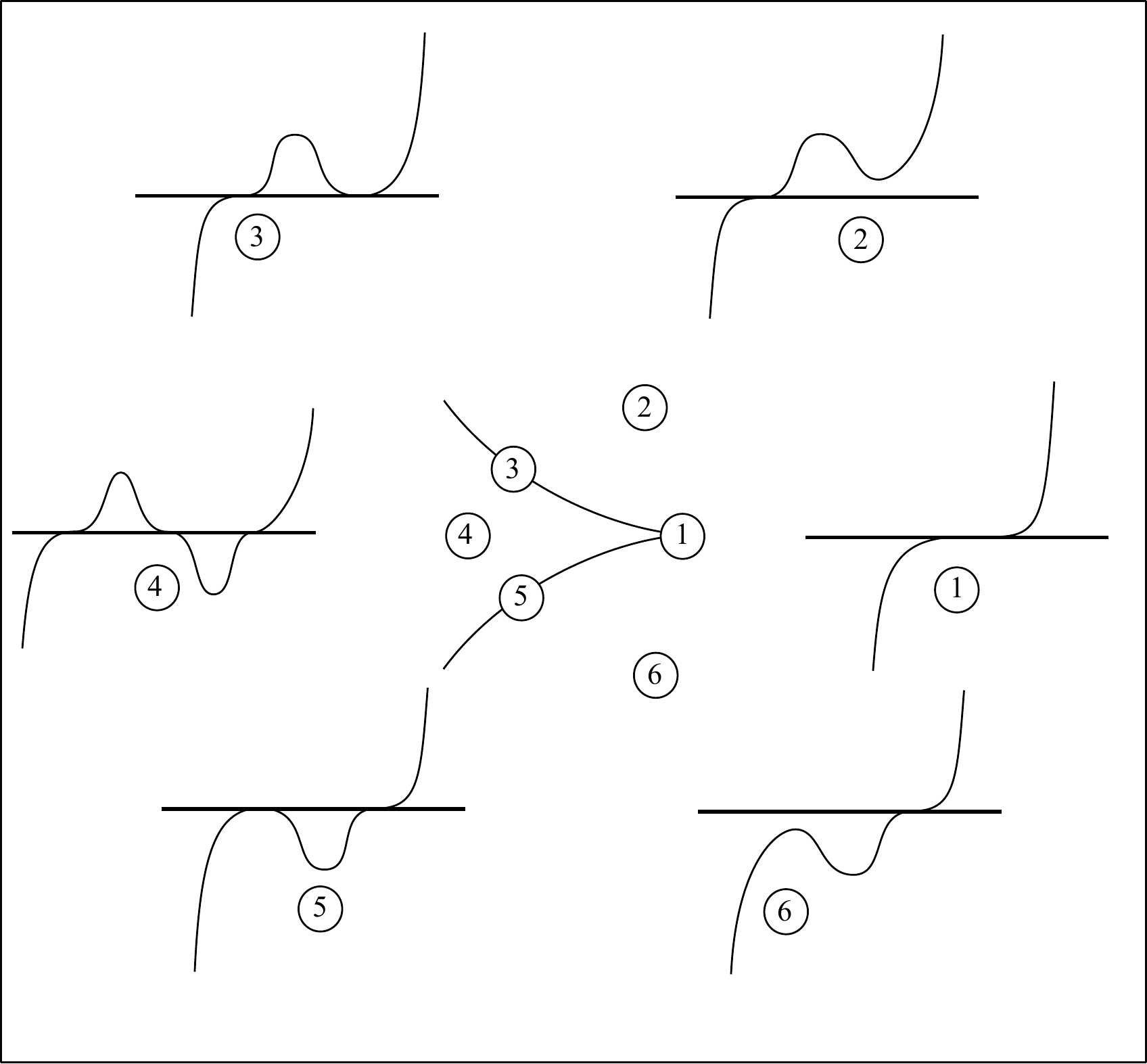}
\caption{Bifurcations in generic 2-parameter families of
the critical set (left), and of the proper apparent contour (thin) and of the projection of the singular set (thick) of $M$ (right)
at an $_X\mathcal A$-singularity of Type 4.}
\label{fig:ProjType4}
\end{center}
\end{figure}

\medskip
$\bullet$ Type 5: Here we have a versal family $g=(v+u^3+a_1u,w+u^2)$, so $h(x,y)=(y^2+x^3+a_1x,y^3+x^2)$. (When $a_1=a_2=0$,
$f$ has an $\mathcal A$-singularity of type $\mbox{\rm I}_{2,2}^{1,1}$, see \cite{riegerruas}.)  The critical set is given by
$y(4x-3y(3x^2+a_1))=0$ and is the union of two transverse curves.
The image of the curve $y=0$ is the $\mathcal A_e$-versal family
$(x^3+a_1x,x^2)$ of a cusp curve (Figure \ref{fig:ProjType5}, thick curve).
The image of the other branch is a cusp when $a_1=0$, and when $a_1\ne 0$, we write $y=4x/(3(3x^2+a_1))$ so its image can be parametrised by
$$
(\frac{16}{9}\frac{x^2}{(3x^2+a_1)^2}+x^3+a_1x, \frac{64}{27}\frac{x^3}{(3x^2+a_1)^3}+x^2).$$
A short calculation shows that it has always a cusp singularity near the origin for all values of $a_1\ne 0$ near zero.
The origin is a Type 2 singularity for any $a_1\ne 0$; see
Figure \ref{fig:ProjType5}.

\begin{figure}[h]
\begin{center}
\includegraphics[width=9cm, height=2cm]{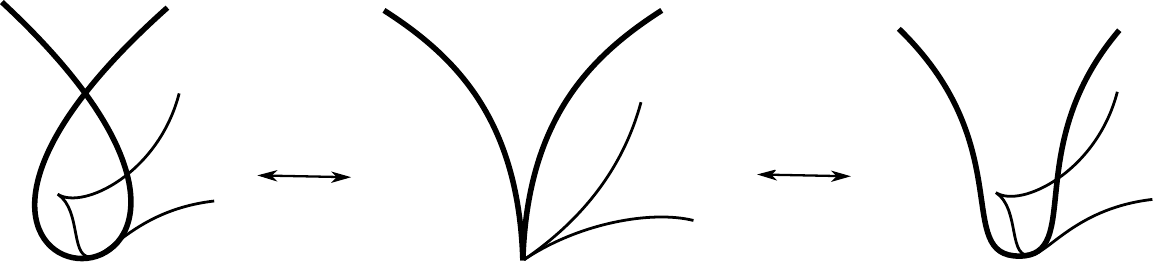}
\caption{Bifurcations in generic 1-parameter families of the proper apparent contour (thin) and
of the projection of the singular set (thick) of $M$ at an $_X\mathcal A$-singularity of Type 5.}
\label{fig:ProjType5}
\end{center}
\end{figure}

$\bullet$ Type 6: The versal family $g=(v+u^5+a_2u^3+a_1u,w+u^2)$ gives
$h(x,y)=(y^2+x^5+a_2x^3+a_1x,y^3+x^2)$ (which has a singularity of $\mathcal A$-type
$\mbox{\rm I}_{2,2}^{1,2}$ at $a_1=a_2=0$).
The critical set is given by $y(4x-3y(a_1+3a_2x^2+5x^4))=0$ and consists of two transverse curves
for any $(a_1,a_2)$ near the origin.
The image of $y=0$ is the $\mathcal A_e$-versal family $(x^5+a_2x^3+a_1x,x^2)$
of the ramphoid cusp curve. The $\mathcal A_e$-deformations in this family are obtained in
\cite{GibsonHobbs, wallgenericgeometry} and are as in Figure \ref{fig:ProjType6}, thick curves. One can show that there are no other
local or multi-local singularities appearing in the deformation.

\begin{figure}[h]
\begin{center}
\includegraphics[width=8cm, height=8cm]{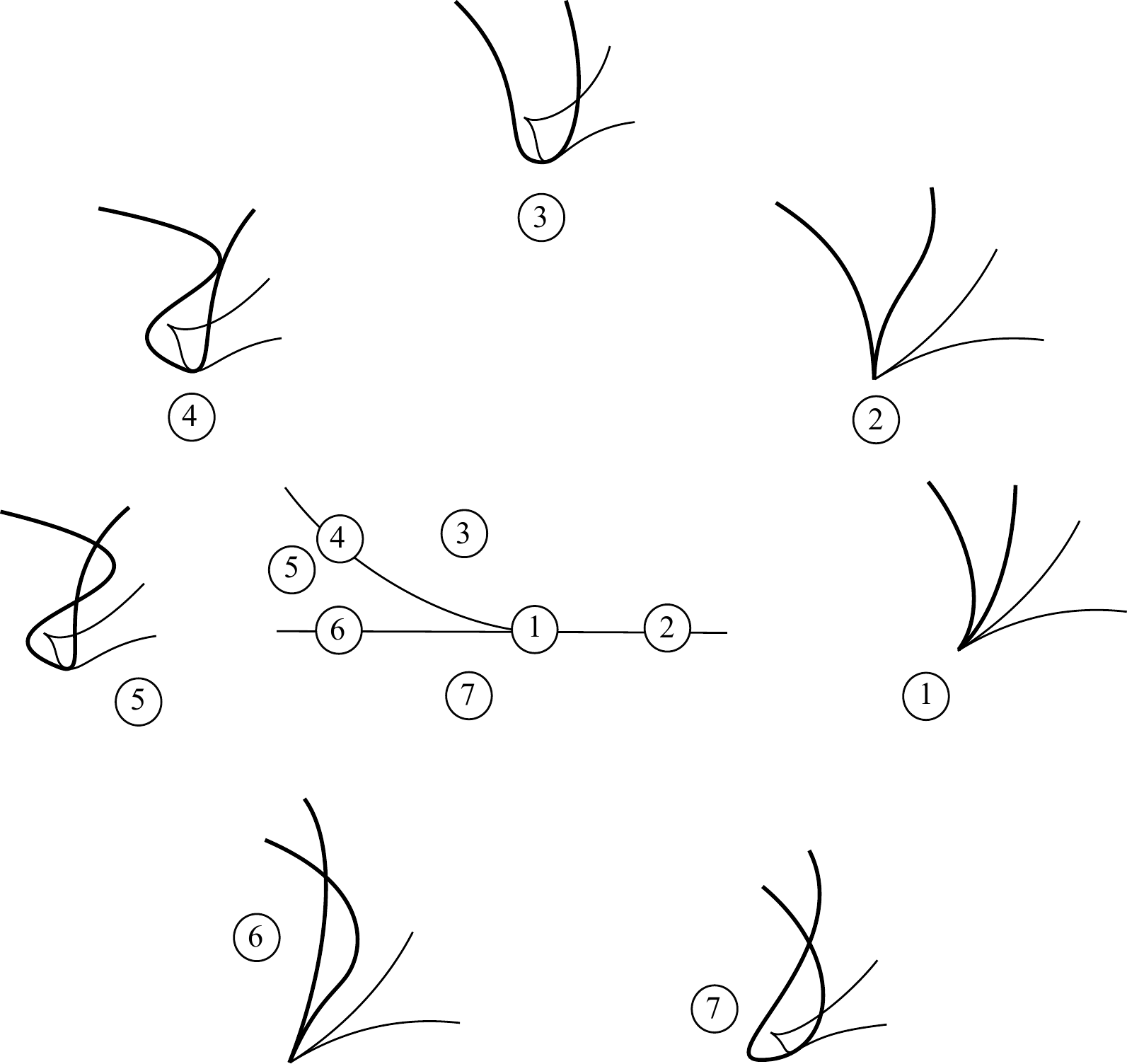}
\caption{Stratification of the parameter space (central figure) and bifurcations in generic 2-parameter families of the proper apparent contour (thin) and of the projection of the singular set (thick) of $M$ at an $_X\mathcal A$-singularity of Type 6.}
\label{fig:ProjType6}
\end{center}
\end{figure}

\medskip
$\bullet$ Type 7:
 The versal family $(v+au^2\pm u^4,w+uv+bu^3+cu^4+du^5+eu^6+a_2u^2+a_1u)$ gives
$h(x,y)=(y^2+ax^2\pm x^4,y^3+xy^2+bx^3+cx^4+dx^5+ex^6+a_2x^2+a_1x)$ (which has
a non-simple corank 2 singularity when $a_1=a_2=0$; these are yet to be classified).
We shall stratify the parameter space $(a_1,a_2)$ near the origin by the loci of codimension 2,1,0 local and multi-local
singularities of $h$. The stratification depends on the
 five moduli of $g_7$ in Table \ref{tab:germProj}. However, as shown by the calculations below,
the configuration of the strata (up to homeomorphism $\mathbb R^2,0\to \mathbb R^2,0$)
depends only on the two moduli $a$ and $b$. We also obtain a (partial) stratification of the $(a,b)$ plane
into strata where the configuration of the bifurcation set of $h$ is constant.

The singular set of $h$ is given by
$$
y\left(a_1+2a_2x-(2a-3b)x^2-3axy+y^2+4cx^3+(5d \mp4)dx^4 \mp 6yx^3+6ex^5\right)=0.
$$
It consists of the singular set of the cuspidal edge $y=0$ and another component which we denote by $S_{(a_1,a_2)}$.
The image of $y=0$ gives an $\mathcal A_e$-versal family
$$
(ax^2\pm x^4,bx^3+cx^4+dx^5+ex^6+a_2x^2+a_1x)
$$
of a cusp curve. (We require $a\ne 0$ and $b\ne 0$ which are also conditions for finite determinacy of the germ $g_7$.
We observe that we get a self-intersection in the image of $y=0$ if and only if $a_1b<0$.)

\begin{figure}[h]
\begin{center}
\includegraphics[width=7cm, height=6cm]{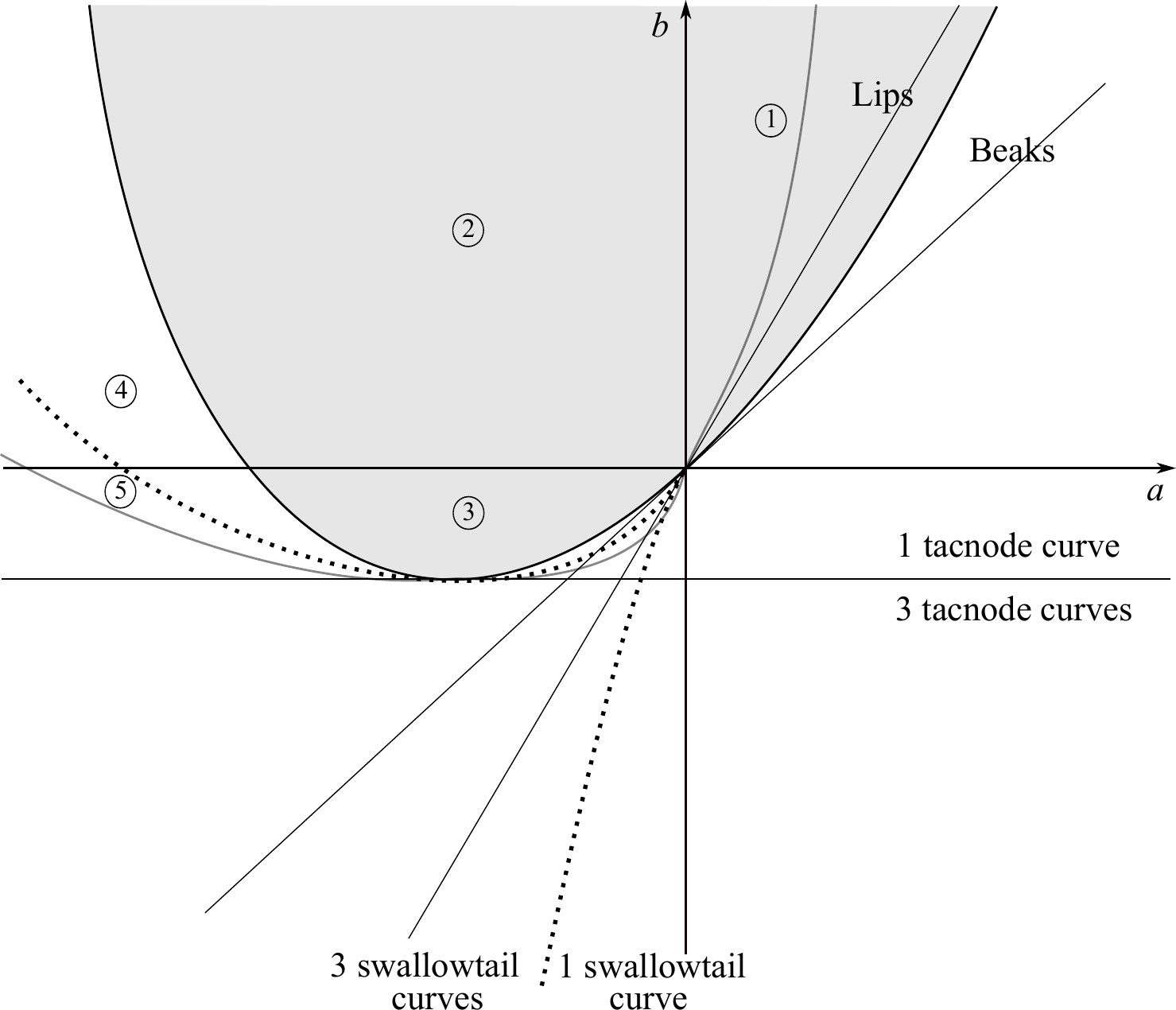}
\caption{A (partial) stratification of the $(a,b)$-plane.}
\label{fig:Type7Strat(ab)}
\end{center}
\end{figure}

\medskip
\noindent
{\it The lips/beaks stratum}\\
 The singular set $S_{(0,0)}$  has a Morse singularity at the origin if and only if
$9 a^2+8 a-12 b\ne 0.$ We obtain a stratum in the $(a,b)$-plane given by  the curve
\begin{equation}
\label{eq:MorseCond}
9 a^2+8 a-12 b=0,
\end{equation}
where the singularity of $S_{(0,0)}$ is more degenerate.
The parabola \eqref{eq:MorseCond} splits the $(a,b)$-plane into two regions. For $(a,b)$ inside (resp. outside) the parabola,
we have a birth of a lips (resp. beaks) singularity on one of the regular sheets of the cuspidal edge (see Figure \ref{fig:Type7Strat(ab)}).
We call the singularities of Type 7 lips (resp. beaks) type if lips (resp. beaks) singularities appear in its bifurcations.

The lips/beaks stratum in the $(a_1,a_2)$-parameter space is given by
the set of parameters $(a_1,a_2)$ for which there exists $(x,y)$ near the origin such that
$S_{(a_1,a_2)}(x,y)=0$ and the function $S_{(a_1,a_2)}$ has a Morse singularity at $(x,y)$. Eliminating variables, we find that
the lips/beaks stratum is a regular curve parametrised by
\begin{equation}
\label{eq:LipsBeaksStrat}
a_1=-\frac{4}{9 a^2+8 a-12 b}\, a_2^2+O_3,
\end{equation}
where $O_3$ (here and in the rest of the paper) is a family of smooth functions in $a_2$ depending smoothly on $a,b,c,d,e$ and has a zero 2-jet as a function in $a_2$.

\medskip
\noindent
{\it Swallowtail stratum} \\
 Using the recognition criteria in \cite{kentaro},
we find the stratum where swallowtail singularities occur is given by
\begin{equation}
\label{eq:SwallotStrat}
a_1=\frac{4(3a^2+(3b-4a)\alpha -\alpha^3)\alpha}{9(a^2+2(b-a) \alpha -\alpha^2 a)^2}\, a_2^2+O_3,
\end{equation}
where $\alpha$ is a solution of the cubic equation
$$
P(\alpha)=2 (a-b)\alpha^3-3 a^2 \alpha^2-a^3=0.
$$

The discriminant of $P$ is $\delta_P=a^3+(a-b)^2$. It is a cusp curve tangent to the lips/beaks parabola
at $(-\frac{4}{9},-\frac{4}{27})$ where both curves have a horizontal tangent (dotted curve in Figure \ref{fig:Type7Strat(ab)}). There are thus three swallowtail
curves in the bifurcation set of $h$ for $(a,b)$ inside the cusp curve $\delta_P=0$ and one swallowtail
curve for $(a,b)$ outside this curve (see Figure \ref{fig:Type7Strat(ab)}). In particular, when lips singularities occur on the profile,
only one swallowtail curve is present in the bifurcation set of $h$. From \eqref{eq:SwallotStrat} we have two additional
strata in the $(a,b)$ plane given as follows:

(i) the swallowtail curve is inflectional, so $\alpha$ is also a root of $3a^2+(3b-4a)\alpha -\alpha^3$.
Calculating the resultant of this polynomial and $P$ we get $243a^4+4(4a-3b)^3=0$ (gray curve in Figure \ref{fig:Type7Strat(ab)}).

(ii) the swallowtail curve is singular, so $\alpha$ is also a root of $a^2+2(b-a) \alpha -\alpha^2 a$.
Calculating the resultant of this polynomial and $P$ we get $a^3(a^3+(a-b)^2)^2=0$, which gives curves that are
already present in the stratification.

\medskip
\noindent
{\it Type 3 singularities stratum}\\
 These occur when $S_{(a_1,a_2)}=0$ is tangent to $y=0$. This occurs
when
$$
a_1=\frac{1}{3b-2a}\, a_2^2+O_3.
$$

Here we require $3b-2a\ne 0$, so we have another stratum in the $(a,b)$-plane given by $3b-2a=0$.
This is precisely the tangent line to the lips/beaks stratum at the origin.

\medskip
\noindent
{\it Type 5 singularities stratum}\\
 These occur when the image of the singular set $h(0,y)$
is singular, and this happens when $a_1=0$ (for any value of the moduli).

\medskip
The above strata exhaust all the possible codimension 1 local singularities that can occur in $h$. We turn now to the multi-local singularities.

\medskip
\noindent
{\it Douple point $+$ fold stratum}\\
 Here a proper profile passes through the
point of self-intersection (the double point) of the image of the singular set of the cuspidal edge.
Thus we have $h(x_1,0)=h(x_2,0)$ for some $x_1\ne x_2$, $S_{(a_1,a_2)}(x,y)=0$ and $h(x_1,0)=h(x,y)$.

From $h(x_1,0)=h(x_2,0)$ we get $x_2=-x_1$ and $a_1+bx_1^2+dx_1^4=0$, so $a_1=-bx_1^2-dx_1^4$. We have now a system of
five equations $S_{(-bx_1^2-dx_1^4,a_2)}(x,y)=0$ and $h(x_1,0)=h(x,y)$ in $x_1,x,y,a_2$. A necessary condition for the existence of a
solution is $a^3+(a-b)^2>0$ and $a\ne b$. Thus, there are no ``douple point $+$ fold" singularities for $(a,b)$
inside the cusp $\delta_P=a^3+(a-b)^2=0$ where 3 swallowtail curves appear. When  $a^3-(a-b)^2<0$,
calculations show that the  ``douple point $+$ fold" stratum is a regular curve parametrised
by
$$
a_1=-\frac{b}{a^3+(a-b)^2}\, a_2^2+O_3.
$$

\medskip
\noindent
{\it Double point $+$ Type 2 stratum}\\
We have a double point on the image of the singular set of the cuspidal edge
and a fold tangent to one of its branches at the double point. Thus
$h(x_1,0)=h(x_2,0)$ for some $x_2=-x_1$ (from above stratum) and $S_{(a_1,a_2)}(x_1,0)=0$ (or $S_{(a_1,a_2)}(-x_1,0)=0$).
The stratum is again a regular curve parametrised by
$$
a_1=-\frac{b}{(a-b)^2}\, a_2^2+O_3.
$$

\medskip
\noindent
{\it Type 2 $+$ fold stratum}\\
Here we a have a Type 2 singularity together with another piece of the proper profile intersecting the two
tangential curves transversally. Thus, we have a Type 2 singularity at $(x,0)$ and there exists $(x_1,y_1)$, with $y_1\ne 0$ such that
$S_{(a_1,a_2)}(x,0)=S_{(a_1,a_2)}(x_1,y_1)=0$ and $h(x,0)=h(x_1,y_1)$.

This stratum consists of 1 or 3 regular curves depending on whether the polynomial
$$
Q(\lambda)=(a^3+(a-b)^2)\lambda^3-3(a^3+(a-b)^2)\lambda^2+3(a-b)^2\lambda+4a^3-(a-b)^2
$$
has 1 or 3 roots.  The discriminant of $Q$ is $-108a^6(a-b)^2(a^3+(a-b)^2)$. In particular, we have 1 (resp. 3) regular curves
if and only if the swallowtail stratum consists of 1 (resp. 3) regular curve(s).

The initial part of the parametrisation of the curve(s) is
$$
a_1=\frac{4(\lambda-1)((3b-3a)\lambda^2-(3a\mu +3b -2a)\lambda+a)}{((3b-3a)\lambda^2-3a\mu\lambda-3b+3a)^2}\, a_2^2+O_3,
$$
with $\mu=-\sqrt{a(1-\lambda^2)}$ and $\lambda$ a root of the  polynomial $Q$.

\medskip
\noindent
{\it Triple point stratum} \\
We have three pieces of the proper apparent contour intersecting transversally at a given point,
so we need to solve $h(x_1,y_1)=h(x_2,y_2)=h(x_3,y_3)$ in  $(a_1,a_2)$, with $(x_i,y_i)\ne (x_j,y_j)$ for $i\ne j$.
The stratum is empty in the examples in Figures \ref{fig:ProjType7Lips(a)}, \ref{fig:ProjType7Lips(b)}, \ref{fig:ProjType7Lips(c)},  \ref{fig:ProjType7BeaksR7}  and consists of a regular curve in the case of Figure \ref{fig:ProjType7BeaksR6(2)}. For the general case, the equations are too lengthy
to reproduce here and eliminating variables leads to equations of very high order.

\medskip
\noindent
{\it Tacnode stratum}\\
 This consists of a multi-local singularity where the proper apparent contour has an ordinary tangency with
the image of the cuspidal curve.
The stratum consists of one regular curve if $b>-\frac{4}{27}$ and 3 regular curves if $b<-\frac{4}{27}$ (and $ab\ne 0$) parametrised by
$$
a_1=\frac{(-a\lambda^4+2b\lambda^3+(a\mu+2a-3b)\lambda^2-a\mu-a+b)(\lambda-1)^2}
{((b-a)\lambda^3-a\mu\lambda^2+(3a\mu+3a-3b)\lambda-2a\mu+2b-2a)^2}\, a_2^2+O_3,
$$
with $\lambda$ a solution of
$$(a^3-(a+b)^2)\lambda^3+(3a^3+4b^2-(a-b)^2)\lambda^2+(3a^3-3b^2+(a+b)^2)\lambda+a^3+(a-b)^2=0$$
and $\mu=-\sqrt{-a(1-\lambda^2)}$.

\medskip
\noindent
{\it Cusp $+$ fold}\\
We have a cusp on the proper apparent contour on the image of the singular set of the cuspidal edge. Here we get two
regular curves given by
$$
a_1=\frac{4}{9}\frac{(3a^2+(3b-4a)\beta-\beta^3)\beta}{(a^2+2(b-a)\beta-a\beta^2)^2}\,a_2^2+O_3,
$$
with $\beta$ one of the two real roots of
$$
3a^5-4a^3(a-b)\beta+6\beta^2a^4-12a^2(a-b)\beta^3+(4(a-b)^2-a^3)\beta^4.
$$

We draw in Figures \ref{fig:ProjType7Lips(a)}, \ref{fig:ProjType7Lips(b)}, \ref{fig:ProjType7Lips(c)}
three possibilities for the bifurcations of the proper apparent contour and of the projections of the singular set
for the lips Type 7 singularity. Figures \ref{fig:ProjType7BeaksR7}, \ref{fig:ProjType7BeaksR6(2)}
are for the bifurcations of a  beaks Type 7 singularity with one figure having one swallowtail stratum and the other 3 swallowtail strata.
As Figure \ref{fig:Type7Strat(ab)} shows, there are various open strata in the $(a,b)$-plane and in each stratum we have distinct
bifurcations of the apparent contour. We observe that the stratification in \mbox{Figure \ref{fig:Type7Strat(ab)}} is a partial one as we have not included, for instance,
where the various curves in the stratification of the $(a_1,a_2)$-plane are inflectional nor where their relative position changes.
Figures \ref{fig:ProjType7Lips(a)}-\ref{fig:ProjType7BeaksR6(2)} and \mbox{Figure \ref{fig:Type7Strat(ab)}} show the richness of  the extrinsic differential geometry of the cuspidal edge.
\end{proof}

\begin{rem}{\rm
In the calculations in the proof of Theorem \ref{theo:BiffAppCont}  for the Type 7 singularity, we eliminate variables using resultant (using Maple)
until we get
get an equation $k=0$ involving only two variables. The function germ $k$ is finitely $\mathcal R$-determined and has a well understood singularity (in the cases involved), so we can deduce the smooth structure of $k=0$ and, in particular, the number of its branches and the initial parametrisation of each branch.
We reverse the process until we get the initial part of the local parametrisation of the desired stratum.
}
\end{rem}

\begin{figure}[h]
\begin{center}
\includegraphics[width=15cm, height=20cm]{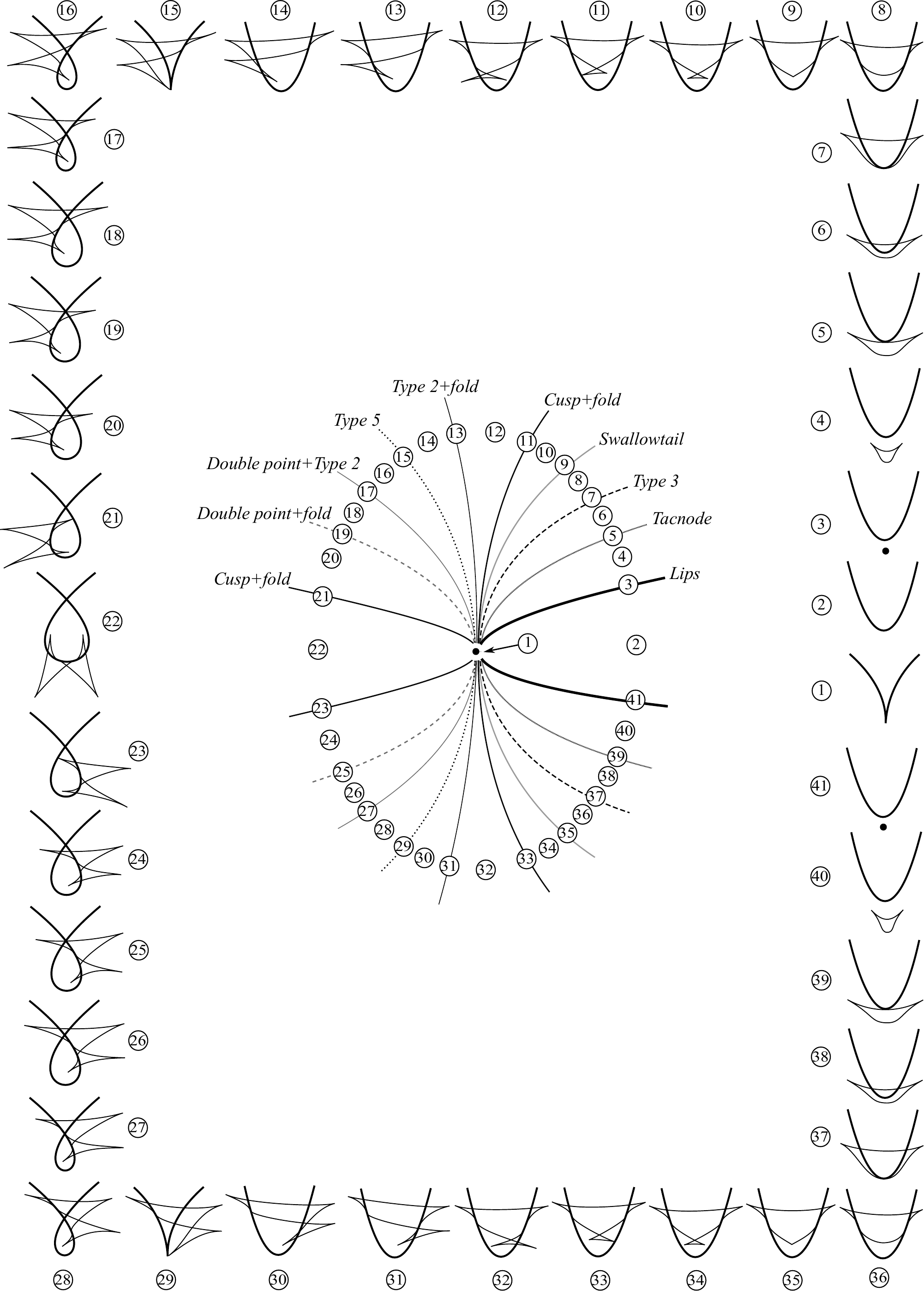}
\caption{Lips type bifurcations in the proper apparent contour and of the projections of the singular set at a Type 7 singularity with
$(a,b)$ in Region \textcircled{$\scriptstyle1$}, in Figure \ref{fig:Type7Strat(ab)}.}
\label{fig:ProjType7Lips(a)}
\end{center}
\end{figure}

\begin{figure}[h]
\begin{center}
\includegraphics[width=15cm, height=20cm]{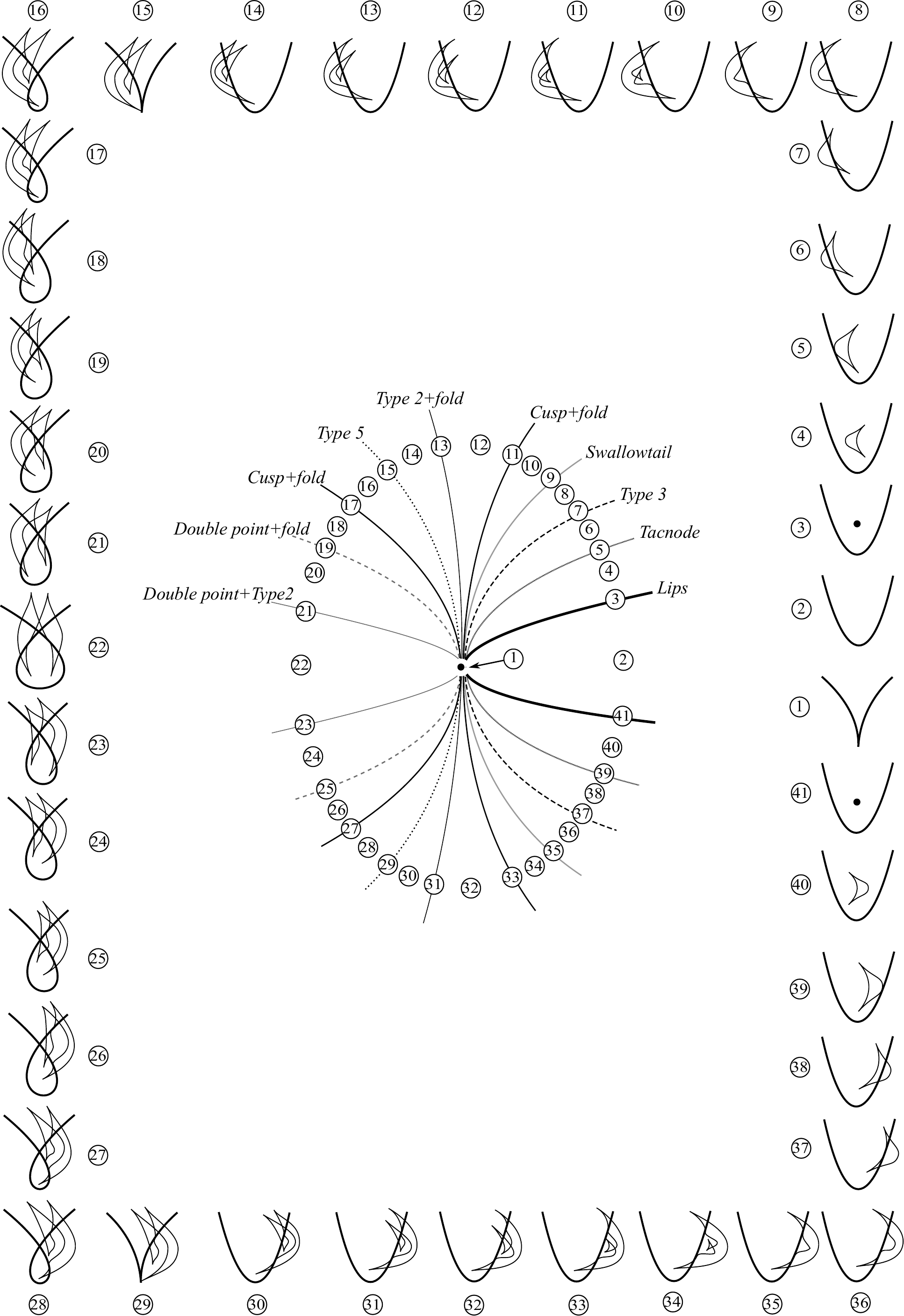}
\caption{Lips type bifurcations in the proper contour and of the projections of the singular set at a Type 7 singularity with
$(a,b)$ in Region \textcircled{$\scriptstyle2$}, in Figure \ref{fig:Type7Strat(ab)}.}
\label{fig:ProjType7Lips(b)}
\end{center}
\end{figure}

\begin{figure}[h]
\begin{center}
\includegraphics[width=15cm, height=20cm]{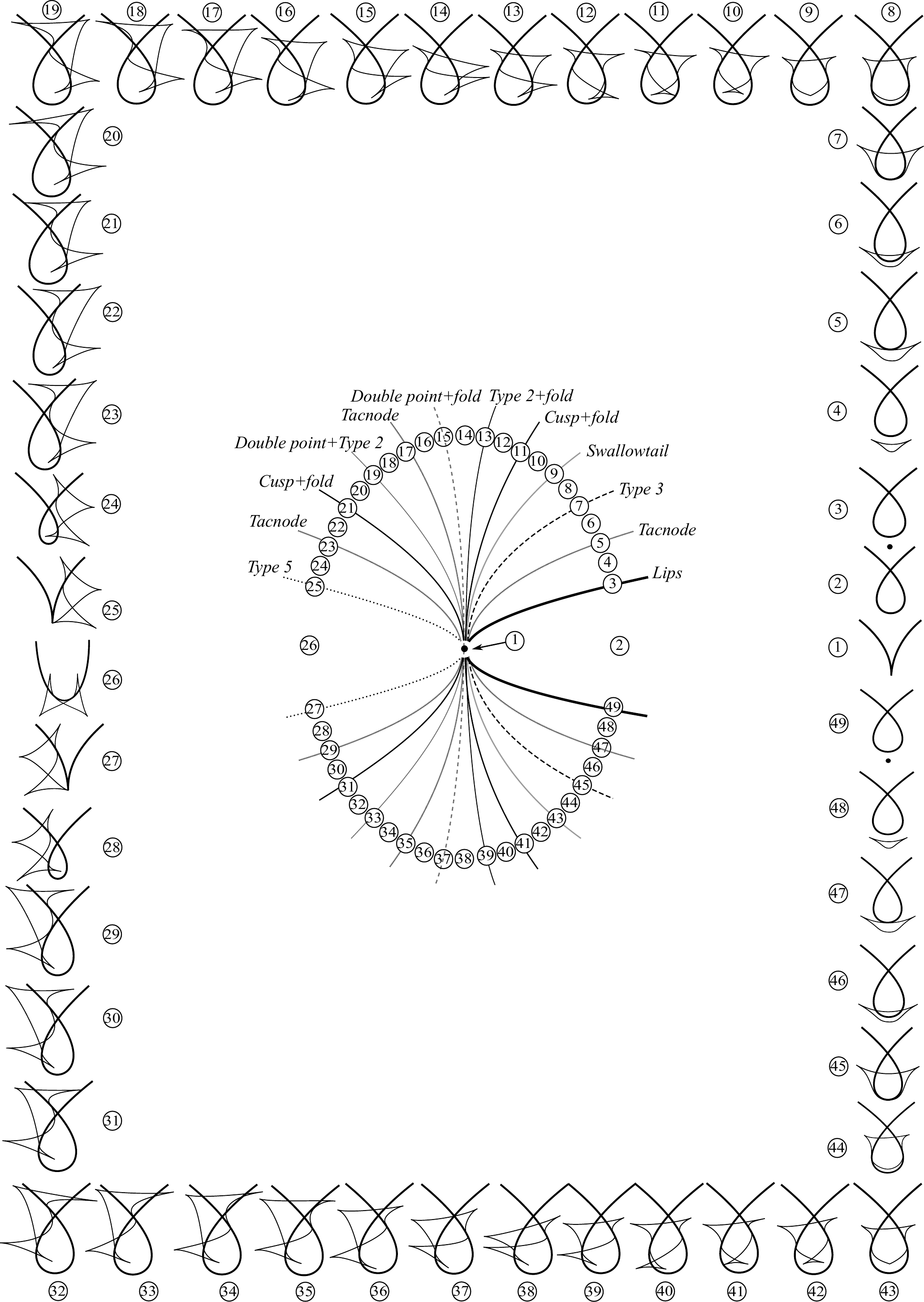}
\caption{Lips type bifurcations of the proper contour and of the projections of the singular set at a Type 7 singularity with
$(a,b)$ in Region \textcircled{$\scriptstyle3$}, in Figure \ref{fig:Type7Strat(ab)}.}
\label{fig:ProjType7Lips(c)}
\end{center}
\end{figure}

\begin{figure}[h]
\begin{center}
\includegraphics[width=15cm, height=20cm]{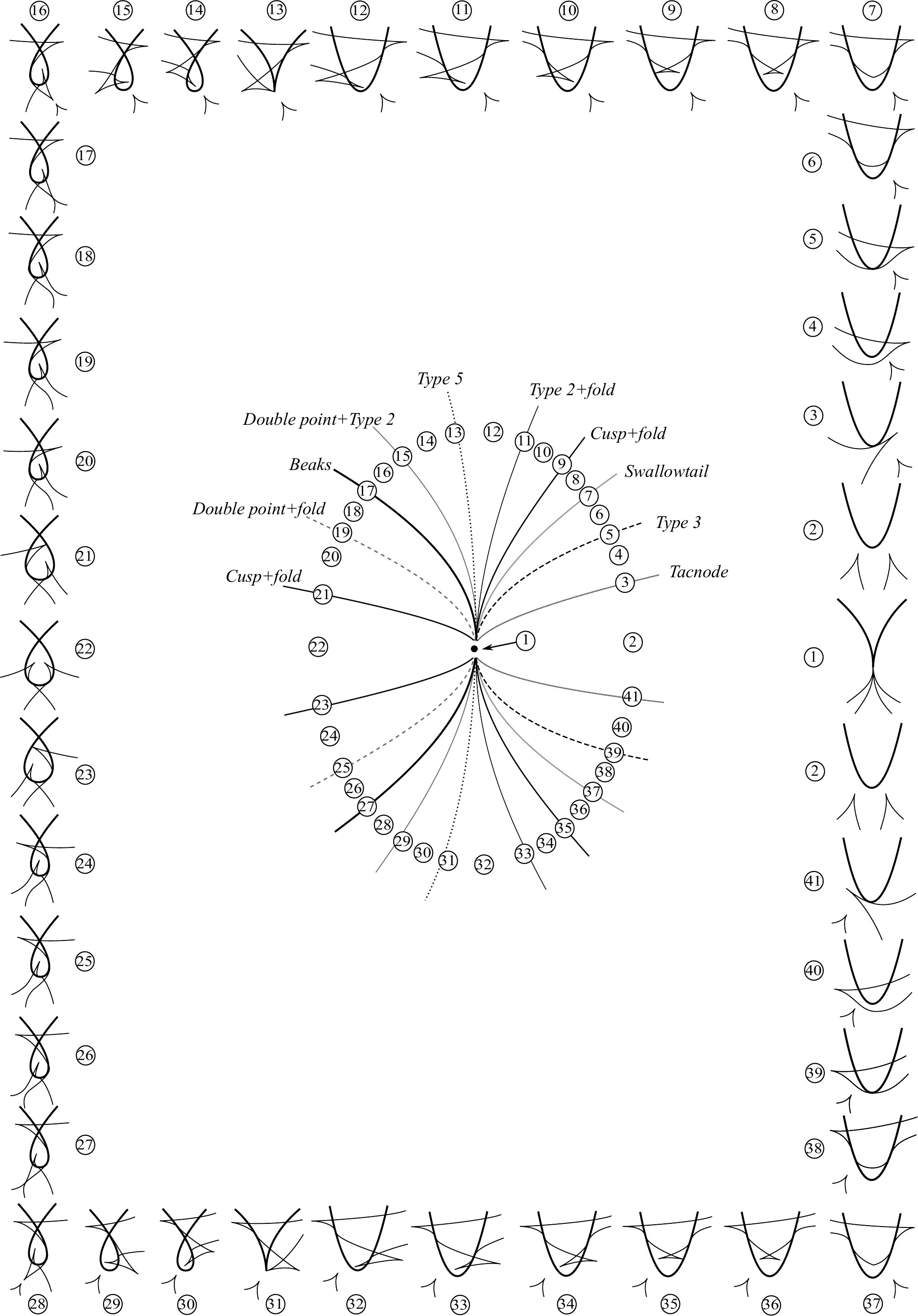}
\caption{Beaks type bifurcations of the proper contour and of the projections of the singular set at a Type 7 singularity with
$(a,b)$ in Region \textcircled{$\scriptstyle4$}, in Figure \ref{fig:Type7Strat(ab)}.}
\label{fig:ProjType7BeaksR7}
\end{center}
\end{figure}

\begin{figure}[h]
\begin{center}
\includegraphics[width=15cm, height=19cm]{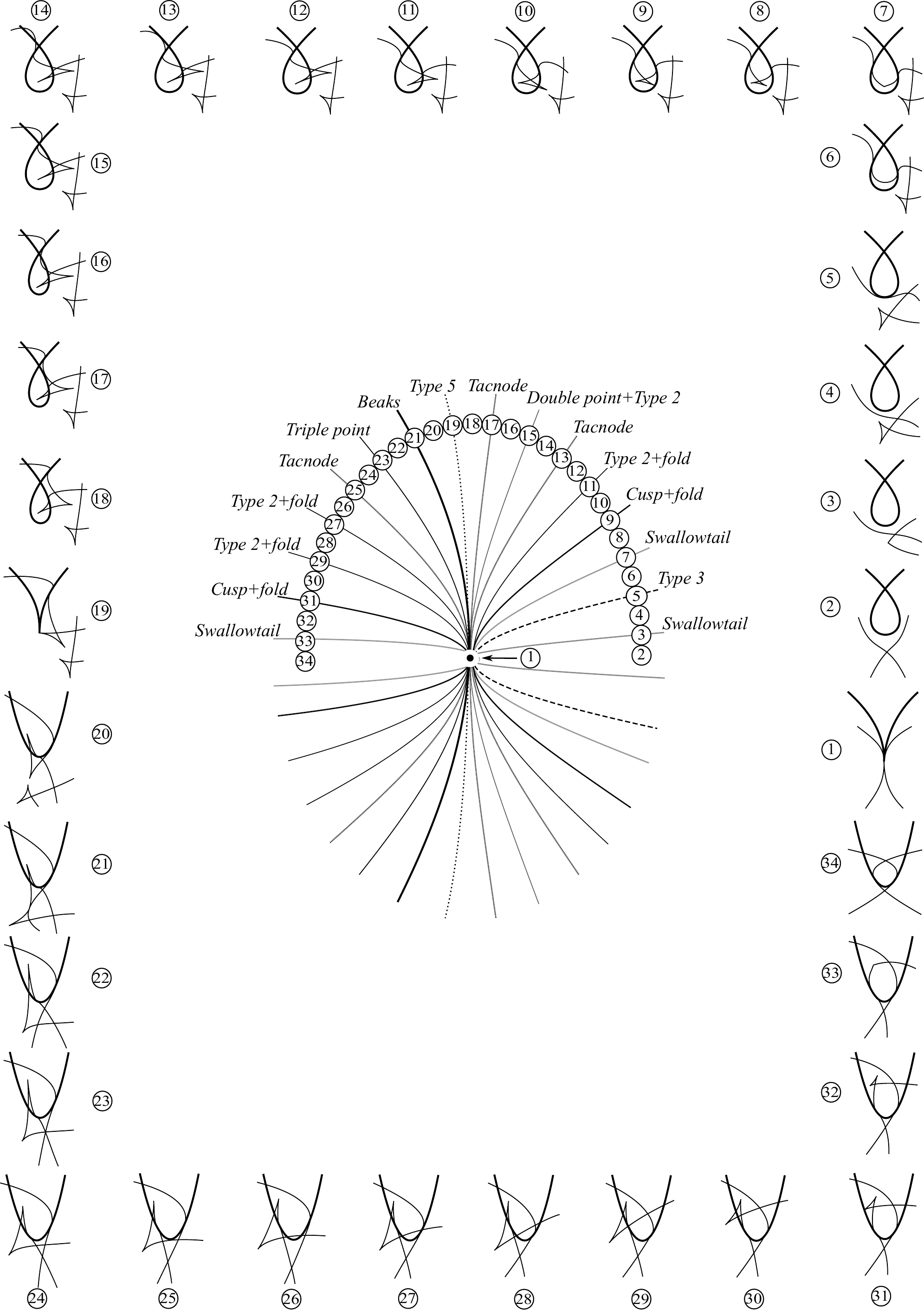}
\caption{Beaks type bifurcations of the proper contour and of the projections of the singular set at a Type 7 singularity with
$(a,b)$ in Region \textcircled{$\scriptstyle5$}, in Figure \ref{fig:Type7Strat(ab)}. The figure for $(a_1,a_2)$ with $a_2<0$ is the reflection with respect to the $y$-axis of the figure for $(a_1,-a_2)$ and is omitted for lack of space.}
\label{fig:ProjType7BeaksR6(2)}
\end{center}
\end{figure}

\bigskip

Finally, we consider geometric criteria for recognition of the generic singularities of the orthogonal projections of
the cuspidal edge $M$.
We denote by $TC_pM$ the tangent cone to $M$ at a point $p$ on the singular set $\Sigma$
and by ${\bf v}_{tg}$ the tangential direction at $p$.

\begin{prop}\label{prop:geomcondProj}
For a generic cuspidal edge, the projection  $P_{\bf v}$ can have one of the local singularities in
{\rm Table \ref{tab:germProj}} and these occur
at a point $p\in \Sigma$ when the following geometric conditions are satisfied.

{\rm (i)} If ${\bf v}$ is transverse to the tangent cone $TC_pM$, then $P_{\bf v}$ has a singularity of \mbox{\rm Type 1}.

{\rm (ii)} If $\kappa_n(p)\ne 0$, for all directions in $TC_pM\setminus \{{\bf v}_{tg}\} $ except for one ${\bf v}_{0}$, the
singularity of $P_{\bf v}$ is of \mbox{\rm Type 2}. The singularity of $P_{{\bf v}_{0}}$ is of \mbox{\rm Type 3} at generic points on $\Sigma$ and
becomes of \mbox{\rm Type 4} at isolated points on $\Sigma$. The two types \mbox{\rm Type 3} and \mbox{\rm Type 4} are distinguished by the contact
of the proper contour generator with $\Sigma$, two for \mbox{\rm Type 3} and three for \mbox{\rm Type 4}.

If $\kappa_n(p)=0$ but $\kappa_t(p)\ne 0$, the singularity of $P_{\bf v}$ is of \mbox{\rm Type 2} for
all ${\bf v}\in TC_pM\setminus \{{\bf v}_{tg}\} $. If $\kappa_n(p)=\kappa_t(p)=0$,
the singularity of $P_{\bf v}$ is of \mbox{\rm Type 3} for
all ${\bf v}\in TC_pM\setminus \{{\bf v}_{tg}\} $ except for one direction where it becomes of \mbox{\rm Type 4}.

{\rm (iii)} The singularity of  $P_{{\bf v}_{tg}}$ is of \mbox{\rm Type 5} if $\tau_\Sigma(p) \kappa_n(p)\ne 0$,
generically of \mbox{\rm Type 6} if $\tau_\Sigma(p)=0$ and $\kappa_n(p)\ne 0$, and generically of
\mbox{\rm Type 7} if $\kappa_n(p)=0$.
\end{prop}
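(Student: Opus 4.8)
The plan is to compute, for the parametrisation $\phi$ in \eqref{eq:prenormalform} of $M$ centred at $p$, the germ $P_{\bf v}=\Pi_{\bf v}\circ\phi$ of the restriction to $M$ of the orthogonal projection along ${\bf v}=(v_1,v_2,v_3)$, and to identify its $_X\mathcal A$-type by recognising it against the germs $g_i\circ f$, $i=1,\dots,7$, of Table~\ref{tab:germProj}. Writing $\phi=k\circ f\circ h^{-1}$ with $k$ a germ of a diffeomorphism of $\mathbb R^3$, the projection of $M$ corresponds to the submersion $\Pi_{\bf v}\circ k$ on $X$, whose $_X\mathcal A$-class is determined by a finite jet (Theorem~\ref{theo:Detgroup_XA}) and hence by a finite jet of $P_{\bf v}$; by the transversality result of \S\ref{sec:projections} only the singularities listed in Theorem~\ref{theo:ClassProj} occur for a generic $M$. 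Since orthogonal projection along ${\bf v}$ differs from the linear projection of $\mathbb R^3$ along ${\bf v}$ onto a transverse plane by a linear isomorphism of the target, I would replace $\Pi_{\bf v}$ by the latter. The argument then splits into the three geometric regimes for ${\bf v}$ relative to $TC_pM=\{w=0\}$: transverse ($v_3\neq0$), inside the tangent cone but not tangential ($v_3=0$, $v_2\neq0$), and tangential (${\bf v}={\bf v}_{tg}=(1,0,0)$).

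In case (i), projecting along ${\bf v}$ onto $\{w=0\}$ and using that $b_1,b_2,b_3$ contribute no linear terms, the first component of $P_{\bf v}$ is $x+O(2)$ and can be taken as a new source coordinate, after which the second component has non-zero coefficient of $y^2$ at the origin (equal to $\tfrac12$, since $b_2(0)=0$); the splitting lemma then gives $P_{\bf v}\sim_{\mathcal A}(x,y^2)=g_1\circ f$, i.e.\ Type~1. In case (ii), projecting along $(v_1,v_2,0)$ onto $\{v=0\}$ gives $P_{\bf v}=\bigl(x-\tfrac{v_1}{v_2}(a(x)+\tfrac12y^2),\ b_1(x)+y^2b_2(x)+y^3b_3(x,y)\bigr)$; the first component together with $y$ is a source chart $(\tilde x,y)$, and after removing the pure $\tilde x$-part of the second component one obtains $P_{\bf v}\sim_{\mathcal A}\bigl(\tilde x,\ \tfrac16 b_{03}\,y^3+\beta(\tilde x)\,y^2+(\cdots)\bigr)$, where $(\cdots)$ is divisible by $y^2$, $\beta(0)=0$ (as $b_1'(0)=b_2(0)=0$) and $b_{03}=\kappa_c(p)\neq0$. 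Because $b_{03}\neq0$, this germ is $\mathcal A$-equivalent to $(\tilde x,y^3+\tilde x^k y^2)=g_{k+1}\circ f$ for $k=\operatorname{ord}_{\tilde x}\beta$ (the finitely determined normal forms of Types~2--4), so $P_{\bf v}$ has a Type~$(k+1)$ singularity, and for generic $M$ only $k\le3$ occurs. A direct expansion gives $\beta'(0)=\tfrac12\bigl(\kappa_t(p)+\kappa_n(p)\tfrac{v_1}{v_2}\bigr)$, so: if $\kappa_n(p)\neq0$, $\beta'(0)$ vanishes only for the single direction ${\bf v}_0$ with $v_1/v_2=-\kappa_t(p)/\kappa_n(p)$, giving Type~2 for ${\bf v}\neq{\bf v}_0$ and Type~3, or Type~4 at the isolated $p$ where the $\tilde x^2$-coefficient of $\beta$ also vanishes, for ${\bf v}={\bf v}_0$; if $\kappa_n(p)=0\neq\kappa_t(p)$ then $\beta'(0)=\tfrac12\kappa_t(p)\neq0$ for all ${\bf v}$, giving Type~2 throughout; and if $\kappa_n(p)=\kappa_t(p)=0$ then $\beta'(0)=0$ for all ${\bf v}$ while the $\tilde x^2$-coefficient of $\beta$ is an affine non-constant function of $v_1/v_2$ (using $\kappa_i(p)=b_{30}\neq0$ generically), vanishing for exactly one direction, so Type~3 for all but one ${\bf v}$ and Type~4 there. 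Finally, the non-$\Sigma$ branch of the critical set of $(\tilde x,y^3+\beta(\tilde x)y^2+\cdots)$ is $y=-\tfrac23\beta(\tilde x)$, which meets $\Sigma=\{y=0\}$ with contact $\operatorname{ord}\beta$, so Types~3 and 4 are distinguished by a two- resp.\ three-point contact of the proper contour generator with $\Sigma$.

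For case (iii), projecting along $(1,0,0)$ onto $\{u=0\}$ gives $P_{{\bf v}_{tg}}=\bigl(a(x)+\tfrac12y^2,\ b_1(x)+y^2b_2(x)+y^3b_3(x,y)\bigr)$, which has vanishing differential at the origin (corank~$2$) since $a'(0)=b_1'(0)=0$; its $2$-jet is the pair of quadratic forms $(\tfrac12a_{20}x^2+\tfrac12y^2,\ \tfrac12b_{20}x^2)$, which by the $\mathcal G$-trichotomy of \S\ref{ssec:geomcuspidalede} is equivalent to $(y^2,x^2)$ when $\kappa_n(p)=b_{20}\neq0$, to $(\pm x^2+y^2,0)$ when $\kappa_n(p)=0\neq\kappa_s(p)=a_{20}$, and to $(y^2,0)$ when $\kappa_n(p)=\kappa_s(p)=0$; the last is non-generic on $\Sigma$. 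In the hyperbolic case the only germs in Table~\ref{tab:germProj} with this $2$-jet are Types~5 and 6; eliminating the $x^2$-term of the first slot against the second and then reducing by source and left changes (completing a square in $y$, using Theorem~\ref{theo:Detgroup_XA}) brings $P_{{\bf v}_{tg}}$ to $(y^2+x^k+\cdots,\ y^3+x^2+\cdots)$, and one computes that the coefficient of $x^3$ in the first slot equals $\tfrac16\cdot\tfrac{a_{30}b_{20}-a_{20}b_{30}}{b_{20}}=-\tfrac{\tau_\Sigma(p)\kappa_\Sigma(p)^2}{6\,b_{20}}$ by the torsion formula of \S\ref{ssec:geomcuspidalede}; hence $P_{{\bf v}_{tg}}$ is of Type~5 when $\tau_\Sigma(p)\neq0$ and lies in the Type~6 stratum otherwise, whence Type~6, since $\tau_\Sigma(p)=0$ forces $\tau_\Sigma'(p)\neq0$ for generic $M$. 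In the inflection case ($\kappa_n(p)=0\neq\kappa_s(p)$) the $2$-jet of $P_{{\bf v}_{tg}}$ is $\mathcal G$-equivalent to that of $g_7\circ f$, so the germ lies in the Type~7 stratum, and for generic $M$ the five moduli land off the exceptional algebraic subsets of Table~\ref{tab:germProj}, giving Type~7.

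I expect the recognition in case (iii) to be the main obstacle. First, reducing $P_{{\bf v}_{tg}}$ in the hyperbolic case to the Type~5/6 normal form and verifying that $\tau_\Sigma(p)$ (resp.\ $\tau_\Sigma'(p)$) is the discriminating coefficient requires tracking a fairly long chain of coordinate changes and invoking the determinacy criterion of Theorem~\ref{theo:Detgroup_XA} at the correct jet level; and secondly, the genericity statements — that for generic $M$ the Type~7 moduli avoid the bad algebraic sets, and that the special directions and points in cases (ii) and (iii) are as described — require a parametric transversality argument. The latter is supplied by the fact, already used in \S\ref{sec:projections} and \S\ref{sec:Geometryprojections}, that for a generic $M$ the full two-parameter family of orthogonal projections is an $_X\mathcal A_e$-versal deformation of each of its members, so that the moduli depend versally on $M$.
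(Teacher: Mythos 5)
Your proposal is correct and follows essentially the same route as the paper: reduce by transversality to the seven classes of Table \ref{tab:germProj}, split according to the position of ${\bf v}$ relative to $TC_pM$ and ${\bf v}_{tg}$, and read off Types 2--4 from the order of contact of the proper contour generator with $\Sigma$ (your coefficient $\beta'(0)=\tfrac12(\kappa_t+\kappa_n v_1/v_2)$ and the $\tilde x^2$-coefficient match the paper's conditions $\alpha b_{20}+b_{12}=0$ and $2b_{12}^2a_{20}-b_{12}b_{30}+2b_{22}b_{20}=0$), and Types 5--7 from the regularity of the proper contour generator and the cusp type of $P_{{\bf v}_{tg}}(\Sigma)$, which is exactly the $\tau_\Sigma$ computation you perform.
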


\begin{proof}
Following the transversality theorem in the Appendix of \cite{brucewest}, for a generic cuspidal edge only singularities of
$_X\mathcal A_e$-codimension $\le 2$ (of the stratum) occur in the projection $P_{\bf v}$ (i.e., those listed in Table \ref{tab:germProj}).

The kernel direction of $(dP_{\bf v})_p$ is
parallel to the direction of projection $\bf v$.
The relative position of the kernel direction with respect to the tangent cone and the tangential direction
is invariant under diffeomorphism, so can be considered on a submersion $g$ on the model $X$.
It follows from the classification in the proof of \mbox{Theorem \ref{theo:ClassProj}}, that $g$
has a singularity of Type 1 at $p$ when the kernel direction of $dg_p$ is transverse  to the tangent cone $TC_pX$,
a singularity of Type 2,3,4 if the kernel direction is in $TC_pX$ but is not the tangential direction at $p$,
and of Type 5,6,7 if the kernel direction is parallel to the tangential direction.

We take the cuspidal edge parametrised as in \eqref{eq:prenormalform}. Suppose  that
 ${\bf v}=(\alpha,\beta,0)\in TC_pM$ with $\alpha^2+\beta^2=1$ and take $p$ to be the origin.
Consider the projection $\Pi_{\bf v}(u,v,w)=((1-\alpha^2)u-\alpha\beta v, -\alpha\beta u +(1-\beta^2) v,w)$. By rotating the
plane of projection $T_{\bf v}S^2$ to the plane $u=0$ and rescaling we get
$\Pi_{\bf v}(u,v,w)\sim_{_X\mathcal A}(\beta u-\alpha v,w)$. We shall modify the family of projections and take
$\Pi_{\bf v}(u,v,w)=(\beta u-\alpha v,w)$.

Suppose that ${\bf v}\ne {\bf v}_{tg}$ (i.e., $\beta \ne 0$; we can then set $\beta=1$). Following the proof of Theorem \ref{theo:BiffAppCont}, the singularities of Type 2, 3 and 4
are distinguished by the contact of the proper contour generator with the singular set $\Sigma$.
We have
$$
P_{\bf v}(x,y)=(\alpha x- (a(x)+\frac{1}{2}y^2), b_1(x)+y^2b_2(x)+y^3b_3(x,y)).
$$
The singular set of $P_{\bf v}$ is given by $yS_{\alpha}(x,y)=0$ with
$$
\begin{array}{rcl}
S_{\alpha}(x,y)&=&
(\alpha b_{20}+b_{12})x+\frac{1}{2}b_{03}y+\\
&&+(\frac{1}{2}\alpha b_{30}+b_{22}-\alpha a_{20} b_{12})x^2+\frac{1}{2}(b_{13}-\alpha a_{20}b_{03})xy+\frac{1}{2}\alpha b_{12}y^2+\\
&&+\alpha(\frac{2}{3} b_{40}-\frac{1}{2}a_{30}b_{12}-a_{20}b_{22})x^3-\frac{\alpha}{4}( a_{30}b_{03}+2 a_{20}b_{13})x^2y+\alpha b_{22}xy^2+\\
&&+\frac{1}{6}\alpha b_{13}y^3+O(4).
\end{array}
$$

The singular set $S_{\alpha}=0$ is transverse to  $\Sigma$ unless $\alpha b_{20}+b_{12}=0$. If
$b_{20}=\kappa_n(p)\ne 0$, then there is a unique direction ${\bf v}_{0}$ parallel to $(-b_{12},b_{20},0)$ where transversality fails (so $\alpha=\alpha_0=-b_{12}/b_{20}$). The singular set $S_{\alpha_0}=0$
has contact of  order 2 with $\Sigma$ at the origin if $2b_{12}^2a_{20}-b_{12}b_{30}+2b_{22}b_{20}\ne 0$. The contact is of order 3 if
$2b_{12}^2a_{20}-b_{12}b_{30}+2b_{22}b_{20}=0$ and
$\alpha_0(\frac{2}{3} b_{40}-\frac{1}{2}a_{30}b_{12}-a_{20}b_{22})\ne 0$.

Suppose now that $b_{20}=\kappa_n(p)=0$. Then transversality of $S_{\alpha}$ with $\Sigma$ fails if and only if
$b_{12}=\kappa_t(p)=0$. In this case the singularity of $P_{\bf v}$  is of Type 3 unless
$\alpha=\alpha_0=-2b_{22}/b_{30}$, where it becomes of Type 4.

When ${\bf v}\ne {\bf v}_{tg}=(1,0,0)$, we have
$P_{{\bf v}_{tg}}(x,y)=( a(x)+\frac{1}{2}y^2, b_1(x)+y^2b_2(x)+y^3b_3(x,y))$ and its critical set is
given by
$yS_{{\bf v}_{tg}}(x,y)=0$ with
$$
S_{{\bf v}_{tg}}(x,y)=-b_{20}x+(b_{12}a_{20}-\frac{1}{2}b_{30})x^2+\frac{1}{2}a_{20}b_{03}xy-\frac{1}{2}b_{12}y^2
+O(3).
$$

From the proof of Theorem \ref{theo:BiffAppCont}, the singularity of $P_{{\bf v}_{tg}}$ is of Type 5 (resp. Type 6)
if $S_{{\bf v}_{tg}}=0$ is a regular curve and the image of $\Sigma$ by $P_{{\bf v}_{tg}}$ is an ordinary
(resp. ramphoid) cusp. Now $\Sigma_{{\bf v}_{tg}}$ is a regular curve if and only if
$b_{20}=\kappa_n(p)\ne 0$. The image of $\Sigma$ is
$P_{{\bf v}_{tg}}(x,0)=( a(x), b_1(x))$. It has an ordinary cusp if and only if
$b_{30}a_{20}-a_{30}b_{20}\ne 0 $, that is $\tau_\Sigma(p)\ne 0$. When $\tau_\Sigma(p)=0$, the singularity
is generically a ramphoid cusp. When $b_{20}=0$,  $S_{{\bf v}_{tg}}$ is singular so we have generically a singularity of Type 7.
\end{proof}


\noindent
ROS: Departament de Geometria i Topologia, Universitat de València, c/ Dr Moliner nº 50, 46100, Burjassot, València, Spain.\\
Email: raul.oset@uv.es\\

\noindent FT: Instituto de Ci\^encias Matem\'aticas e de
Computa\c{c}\~ao - USP, Avenida Trabalhador s\~ao-carlense, 400 -
Centro,
CEP: 13566-590 - S\~ao Carlos - SP, Brazil.\\
Email: faridtari@icmc.usp.br


\begin{thebibliography}{99}

\bibitem{martinyutaro}
M. Barajas Sichaca and Y. Kabata, Projections of the cross-cap. Preprint, 2015.

\bibitem{brucegiblinBook} J. W. Bruce and P. J. Giblin,  Curves and singularities. Cambridge University Press, 1992.

\bibitem{brucegiblinProjBound}
J. W. Bruce and  P. J. Giblin, Projections of surfaces with boundary.
\emph{Proc. London Math. Soc.} 60 (1990), 392--416.


\bibitem{bgt95}J. W. Bruce, P. J. Giblin  and F. Tari, Families of surfaces: height functions, Gauss maps and duals.
{\it Pitman Res. Notes Math. Ser.} 333 (1995), 148--178.

\bibitem{bkd} J. W. Bruce, N. P. Kirk and A. A. du Plessis, Complete transversals
and the classification of singularities. {\it Nonlinearity} 10 (1997), 253-275.

\bibitem{bdw}
J. W. Bruce, A. A. du Plessis  and C. T. C. Wall,
Determinacy and unipotency.
\emph{Invent. Math.} 88 (1987), 521--554.

\bibitem{bruceroberts}J. W. Bruce and R. M. Roberts, Critical points of functions on analytic varieties.
{\it Topology 27} (1988), no. 1, 57–-90.

\bibitem{brucewest}J. W. Bruce and J. M. West, Functions on a crosscap.
{\it Math. Proc. Cambridge Philos. Soc}. 123 (1998), 19--39.

\bibitem{bruce-wilkinson} J. W. Bruce and T. C. Wilkinson, Folding maps and focal sets.  Lecture Notes in Math., 1462, 63--72, Springer, Berlin, 1991.


\bibitem{damon}
J. N. Damon, Topological triviality and versality for subgroups of
\mbox{${\mathcal A}$} and \mbox{${\mathcal K}$}. \emph{Mem. Amer. Math. Soc.}
{75} (1988),  no. 389.

\bibitem{gaffney}
T. Gaffney, The structure of the $T\mathcal A(f)$, classification and application to differential geometry.
{Singularities, Part 1 (Arcata, Calif., 1981)}. {\it Proc. Sympos. Pure Math.} 40 (1983) 09--427.
 Amer. Math. Soc., Providence, RI.

\bibitem{GibsonHobbs} C. G. Gibson and C. A.  Hobbs, Singularities of general one-dimensional motions of the plane and space.
{\it Proc. Roy. Soc. Edinburgh Sect. A}  125  (1995),  639--656.


\bibitem{ShCarCidFaBook}
S. Izumiya, M. C. Romero Fuster, M. A. S. Ruas and F. Tari, \emph{Differential geometry from singularity theory viewpoint}.
World Scientific, 2015.

\bibitem{transkirk}
N. P. Kirk, {Computational aspects of classifying
singularities}. {\it LMS J. Comput. Math.} 3 (2000), 207–228 .

\bibitem{koenderink}
J. J. Koenderink, \emph{Solide Shape}. MIT Press,
Cambridge, MA, 1990.

\bibitem{martinsnuno} L. Martins and J. J. Nu\~no Ballesteros,
Contact properties of surfaces in $\mathbb R^3$ with corank 1 singularities. {\it Tohoku Math. J.}
 67 (2015), 105--124.


\bibitem{martinssaji} L. Martins and K. Saji, Geometric invariants
of cuspidal edges. To appear in {\it Canad. J. Math.}


\bibitem{rieger} J. H. Rieger, Families of maps from the plane to the plane. {\it J. London Math. Soc.} 36 (1987), 351--369.

\bibitem {riegerruas} J. H. Rieger and M. A. S. Ruas,
Classification of $\mathcal A$-simple germs from $\mathbb K^n$
to $\mathbb K^2$.  {\it Compositio Math.} 79 (1991), 99--108.

\bibitem{kentaro} Saji, Kentaro; Umehara, Masaaki; Yamada, Kotaro, The geometry of fronts. {\it Ann. of Math.} (2) 169 (2009),  491--529.

\bibitem{taricreases}
F. Tari, Projections of piecewise-smooth surfaces.
\emph{J. London Math. Soc.} 44 (1991), 155--172.

\bibitem{teramoto} K. Teramoto, Parallel and dual surfaces of cuspidal edges. Preprint.

\bibitem{wallgenericgeometry} C. T. C. Wall,
Geometric properties of generic differentiable manifolds.
{\it Lecture Notes in Math.}, 597 (1977), 707--774.

\bibitem{west} J. M. West, The differential geometry of the crosscap.  Ph.D. thesis, The University of Liverpool, 1995.

\bibitem{wilkinson} T. C. Wilkinson, The geometry of folding maps. PhD Thesis, University of Newcastle-upon-Tyne, 1991.

\end{thebibliography}
\end{document}